\documentclass[12pt]{amsart}

\usepackage[dvips]{graphicx}
	\usepackage{amsmath,amssymb,amsthm,wrapfig,amsfonts,enumerate,latexsym}

	\setlength{\textwidth}{16cm}
	\setlength{\oddsidemargin}{0cm}
	\setlength{\evensidemargin}{0cm}
	\setlength{\topmargin}{30pt}
	\setlength{\textheight}{21cm}

	\newtheorem{dfn}{Definition}[section]
	\newtheorem{thm}[dfn]{Theorem}
	\newtheorem{prop}[dfn]{Proposition}
	\newtheorem{cor}[dfn]{Corollary}
	\newtheorem{lem}[dfn]{Lemma}
	\newtheorem{rem}[dfn]{Remark}
	\newtheorem{ex}[dfn]{Example}
 	\newtheorem{claim}[dfn]{Claim}
	
	\newtheorem{conj}[dfn]{Conjecture}
	
	\newtheorem{ack}{Acknowledgements\!\!}

	\newcounter{yon}
	\setcounter{yon}{4}

	\numberwithin{equation}{section}

	\def\notin{\not\in}

	\newcommand{\dist}{\mathop{\mathit{d}} \nolimits}
	\newcommand{\id}{\mathop{\mathrm{id}} \nolimits}
	\newcommand{\diam}{\mathop{\mathrm{diam}} \nolimits}

	\newcommand{\pr}{\mathop{\mathrm{pr}} \nolimits}

	\newcommand{\sep}{\mathop{\mathrm{Sep}} \nolimits}
	
	\newcommand{\obs}{\mathop{\mathrm{ObsDiam}}  \nolimits}

	\newcommand{\supp}{\mathop{\mathrm{Supp}}    \nolimits}

	\newcommand{\crad}{\mathop{\mathrm{CRad}}           \nolimits}
	
	\newcommand{\grad}{\mathop{\mathrm{grad}}                   \nolimits}

	\newcommand{\ric}{\mathop{\mathit{Ric}}        \nolimits}

	\begin{document}

	\title[Concentration of maps and group action]
    {Concentration of maps and group action}
	\author[Kei Funano]{Kei Funano}
	\address{Mathematical Institute, Tohoku University, Sendai 980-8578, JAPAN}
	\email{sa4m23@math.tohoku.ac.jp}
	\subjclass[2000]{53C21, 53C23}
	\keywords{L\'{e}vy group, mm-space, concentration of maps}
	\thanks{This work was partially supported by Research Fellowships of
	the Japan Society for the Promotion of Science for Young Scientists.}
	\dedicatory{}
	\date{\today}

	\maketitle


\begin{abstract}In this paper, from the viewpoint of the concentration
 theory of maps, we study a compact group and a L\'{e}vy group action to a
 large class of metric spaces, such as $\mathbb{R}$-trees, doubling
 spaces, metric
 graphs, and Hadamard manifolds. 
 \end{abstract}
	\setlength{\baselineskip}{5mm}

    \section{Introduction}

    Let a compact metric group $G$ acts on a compact metric space
    $X$. In \cite[Theorem 5.1]{mil4}, V. Milman considered a H\"{o}lder
    action (see Section 3.6.2 for the definition) and estimated the
    diameters of orbits from above by words of an isoperimetric
    property of the group $G$ and a covering property of $X$. As he refered in the
    introduction, his idea came from the fixed point theory of a
    L\'{e}vy group action by M. Gromov and Milman in \cite[Theorem
    7.1]{milgro} (see Section 4 for the definition of a L\'{e}vy group). In this paper,
    we consider general continuous actions of a compact metric group and a
    L\'{e}vy group to some concrete
    noncompact metric spaces, such as $\mathbb{R}$-trees, doubling
 spaces, metric
 graphs, and Hadamard manifolds.

 Of isoperimetric inspiring, the L\'{e}vy-Milman concentration theory of maps played an important role in Milman's
    estimation (and also Gromov and Milman's theorem of a L\'{e}vy group
    action). Taking a point $x\in X$, he considered how concentrates the orbit map $G\ni g \to gx\in
    X$ to a constant map. Recent developments of the concentration
    theory of maps by the author (\cite{funano2},
           \cite{funad}, \cite{funano1}), by Gromov (\cite{gromovcat},
    \cite{gromov}), and by M. Ledoux and
    K. Oleszkiewvicz (\cite{ledole}) enable us to estimate how the orbit map
    concentrate to a constant map in the case where $X$ is an
    $\mathbb{R}$-tree, a doubling space, a metric graph, and a Hadamard
    manifold. In stead of considering a H\"{o}lder action and a covering property, we provide an estimate of the diameters
    of orbits of a continuous action of a compact metric group to those metric spaces by
    words of the continuity of the action, an isoperimetric property of
    $G$, and a metric space property of $X$. Our results assert that we
    can measure how the
    action to those metric spaces is closed to the trivial action by the
    above words.

    In the same point of view, we obtain two results of a L\'{e}vy group
    action to the above spaces. A L\'{e}vy group was first introduced and analyzed by
    Gromov and Milman in \cite{milgro}. Gromov and Milman proved that
    every continuous action of a L\'{e}vy group to a compact metric
    space has a fixed point. They also pointed out that the unitary group $U(\ell^2)$ of the separable
     Hilbert space $\ell^2$ with the strong topology is a L\'{e}vy
    group. Many concrete examples of L\'{e}vy groups are known by the
    works of S. Glasner \cite{gla}, H. Furstenberg and B. Weiss
    (unpublished), T. Giordano and V. Pestov \cite{giopes1}, \cite{giopes2}, and
    Pestov \cite{pestov1}, \cite{pestov3}. For examples, groups of measurable maps from the
    standard Lebesgue measure space to compact groups, unitary groups of some
    von Neumann algebras, groups of measure and measure-class preserving
    automorphisms of the standard Lebesgue measure space, full groups of
    amenable equivalence relations, and the isometry groups of the
    universal Urysohn metric spaces are L\'{e}vy groups (see the recent
    monograph \cite{pestov2} for precise). One of our results states
    that there is no non-trivial uniformly continuous action of a L\'{e}vy group to the above
    spaces (Proposition \ref{th3}). We also obtain a generalization of Gromov and Milman's fixed
    point theorem (Proposition \ref{th2}). Both two results are obtained
    by making Gromov and Milman's argument precise.

    The article is organized as follows. In Section $2$, we recall
    basic facts about the concentration theory of maps and prepare for
    the Sections $3$ and $4$. In Section $3$,
    we estimates the diameter of orbits of a compact group action to
    $\mathbb{R}$-trees, doubling spaces, meric graphs, and Hadamard
    manifolds. Section $4$ is devoted to a L\'{e}vy group action to
    those spaces.
     \section{Preliminaries}
\subsection{Concentration function and observable diameter}
In this subsection, we recall some basic facts in the concentration
theory of 1-Lipschitz maps. We recall relationships between an
isoperimetric property of an mm-space (metric measure space) and the concentration theory of $1$-Lipschitz functions. The concentration
theory of $1$-Lipschitz functions was introduced by Milman in his
investigations of asymptotic geometric analysis (\cite{mil1},
\cite{mil2}, \cite{mil3}). While the concentration theory of functions
developed, the concentration theory of maps into general metric spaces
was first studied by Gromov (\cite{gromovcat}, \cite{gromov2},
\cite{gromov}). He established the theory by introducing the observable
diameter in \cite{gromov}. We first recall its definition. 

Let $Y$ be a metric space and $\nu$ a Borel measure on $Y$ such that $m:=\nu(Y)<+\infty$. 
	 We define for any $\kappa >0$
	 \begin{align*}
	  \diam (\nu , m-\kappa):= \inf \{ \diam Y_0 \mid Y_0 \subseteq Y \text{ is a Borel subset such that }\nu(Y_0)\geq m-\kappa\}
	  \end{align*}and call it the \emph{partial diameter} of $\nu$.

      Let $(X,\dist_X)$ be a complete sparable metric space equipped with a
finite Borel measure $\mu_X$ on $X$. Henceforth, we call such a triple
an \emph{mm-space}.
   \begin{dfn}[Observable diameter]\upshape Let $(X,\dist_X,\mu_X)$ be
      an mm-space with $m_X:=\mu_X(X)$ and $Y$ a metric space. For any $\kappa >0$ we
	 define the \emph{observable diameter} of $X$ by 
	 \begin{align*}
	  \obs_Y (X; -\kappa):=
	   \sup \{ \diam (f_{\ast}(\mu_X),m_X-\kappa) \mid f:X\to Y \text{ is a
      }1 \text{{\rm -Lipschitz map}}  \}, 
      \end{align*}where $f_{\ast}(\mu_X)$ stands for the push-forward
      measure of $\mu_X$ by $f$.
      \end{dfn}
      The idea of the observable diameter comes from the quantum and statistical
	mechanics, that is, we think of $\mu_X$ as a state on a configuration
	space $X$ and $f$ is interpreted as an observable.

    Given sequences $\{X_n\}_{n=1}^{\infty}$ of mm-spaces and $\{
    Y_n\}_{n=1}^{\infty}$ of metric spaces, observe that $\lim_{n\to
    \infty}\obs_{Y_n}(X_n;-\kappa)=0$ for any $\kappa >0$ if and only if
    for any sequence $\{ f_n:X_n \to Y_n\}_{n=1}^{\infty}$ of
    $1$-Lipschitz maps there exists a sequence $\{
    m_{f_n}\}_{n=1}^{\infty}$ of points such that $m_{f_n}\in Y_n$ and
    \begin{align*}
     \lim_{n\to \infty}\mu_{X_n}(\{ x_n \in X_n \mid
     \dist_{Y_n}(f_n(x_n),m_{f_n})\geq \varepsilon\})=0
     \end{align*}for any $\varepsilon>0$. A sequence
     $\{X_n\}_{n=1}^{\infty} $ of
     mm-spaces is said to be a \emph{L\'{e}vy family} if $\lim_{n\to
     \infty}\obs_{\mathbb{R}}(X_n;-\kappa)=0$ for any $\kappa>0$. The
     concept of L\'{e}vy families was first introduced in \cite{milgro}.

  For an mm-space $X$ with $\mu_X(X)=1$, we define the \emph{concentration function}
        $\alpha_X:(0,+\infty)\to \mathbb{R}$ as the supremum of
        $\mu_X(X\setminus A_{+r})$, where $A$ runs over all Borel subsets
        of $X$ with $\mu_X(A)\geq 1/2$ and $A_{+r}$ is an open
        $r$-neighbourhood of $A$. This function describes an isoperimetric
        feature of the space $X$.

        We shall consider each closed Riemannian manifold as an mm-space
        equipped with the volume measure normalized to have the total
        volume $1$.

         \begin{ex}\label{exl1}\upshape Let $M$ be a closed Riemannian manifold such
         that $\ric_M \geq \widetilde{\kappa}_1>0$. By virtue of the L\'{e}vy-Gromov
         isoperimetric inequality, we obtain $\alpha_M(r)\leq
         e^{-\widetilde{\kappa}_1 r^2/2}$ (see \cite[Section 1.2, Remark
         2]{milgro} or \cite[Theorem 2.4]{ledoux}). Since
         $\ric_{SO(n)}\geq (n-1)/4$, we have $\alpha_{SO(n)}(r)\leq
          e^{-(n-1)r^2/8}$ for example.
        \end{ex}

        \begin{ex}\label{exl2}\upshape Let $M$ be a closed Riemannian
         manifold. We denote by $\lambda_1(M)$ the non-zero first
         eigenvalue of the Laplacian on $M$. Then, for any $r >0$, we have $\alpha_M(r)\leq
         e^{-\sqrt{\lambda_1(M)}r/3}$ (see \cite[Theorem 4.1]{milgro} or
         \cite[Theorem 3.1]{ledoux}). Since the $n$-dimensional torus
         $\mathbb{T}^n:=\mathbb{S}^1 \times \mathbb{S}^1 \times \cdots
         \times \mathbb{S}^1$ satisfies
         $\lambda_1(\mathbb{T}^n)=\lambda_1(\mathbb{S}^1)=1$, we obtain
         $\alpha_{\mathbb{T}^n}(r)\leq e^{-r/3}$ for example.
         \end{ex}

         Let $X$ be an mm-space and $f:X\to \mathbb{R}$ a Borel
         measurable function. A number $m_f\in \mathbb{R}$ is called a
         \emph{median} of $f$ if it satisfies that $f_{\ast}(\mu_X)((-\infty,
         m_f]) \geq m_X /2$ and $f_{\ast}(\mu_X)([m_f ,+\infty))\geq
         m_X/2$. We remark that $m_f$ does exist, but it is not unique
         in general.

         Relationships between the concentration function and the
         observable diameter are the following:
        \begin{lem}[{cf.~\cite[Section 1.3]{ledoux}}]\label{vel1}Let $X$ be an
         mm-space with $\mu_X(X)=1$. Then, for any $1$-Lipschitz
         function $f:X\to \mathbb{R}$ and $\varepsilon >0$, we have
         \begin{align*}
          \mu_X (\{ x\in X \mid |f(x)-m_f|\geq \varepsilon \})\leq
          2\alpha_X(\varepsilon). 
          \end{align*}
         \end{lem}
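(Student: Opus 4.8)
The plan is to exploit the median to split $X$ into two pieces of measure at least $1/2$ and then feed each piece into the definition of the concentration function. Fix a $1$-Lipschitz function $f:X\to\mathbb{R}$, a median $m_f$, and $\varepsilon>0$. Set
\begin{align*}
A^-:=\{x\in X \mid f(x)\leq m_f\}, \qquad A^+:=\{x\in X\mid f(x)\geq m_f\}.
\end{align*}
By the definition of a median we have $\mu_X(A^-)\geq 1/2$ and $\mu_X(A^+)\geq 1/2$, so both sets are admissible in the supremum defining $\alpha_X$.

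The key step is to observe that the $1$-Lipschitz property forces the open $\varepsilon$-neighbourhoods of these sets to stay inside the corresponding sublevel/superlevel sets: if $x\in (A^-)_{+\varepsilon}$, there is $y\in A^-$ with $\dist_X(x,y)<\varepsilon$, hence $f(x)\leq f(y)+\dist_X(x,y)< m_f+\varepsilon$; thus $(A^-)_{+\varepsilon}\subseteq\{f<m_f+\varepsilon\}$ and therefore $\{f\geq m_f+\varepsilon\}\subseteq X\setminus (A^-)_{+\varepsilon}$. Applying the definition of $\alpha_X$ to $A^-$ gives $\mu_X(\{f\geq m_f+\varepsilon\})\leq \alpha_X(\varepsilon)$. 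Symmetrically, $(A^+)_{+\varepsilon}\subseteq\{f>m_f-\varepsilon\}$, whence $\{f\leq m_f-\varepsilon\}\subseteq X\setminus (A^+)_{+\varepsilon}$ and $\mu_X(\{f\leq m_f-\varepsilon\})\leq\alpha_X(\varepsilon)$.

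Finally I would note that $\{x\in X\mid |f(x)-m_f|\geq\varepsilon\}$ is the disjoint union of $\{f\geq m_f+\varepsilon\}$ and $\{f\leq m_f-\varepsilon\}$, so adding the two bounds yields
\begin{align*}
\mu_X(\{x\in X\mid |f(x)-m_f|\geq\varepsilon\})\leq 2\alpha_X(\varepsilon),
\end{align*}
as claimed. There is essentially no hard part here; the only point requiring a little care is matching the open-neighbourhood convention in the definition of $\alpha_X$ with strict versus non-strict inequalities in the (super/sub)level sets, which is exactly why the argument above uses $<$ rather than $\leq$ when passing through $(A^\pm)_{+\varepsilon}$. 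One could equally phrase the whole thing with closed neighbourhoods and $\alpha_X$ evaluated at $\varepsilon' <\varepsilon$ and let $\varepsilon'\uparrow\varepsilon$, but the direct argument is cleaner.
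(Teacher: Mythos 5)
Your argument is correct and is precisely the standard proof from the reference the paper cites for this lemma (the paper itself gives no proof, deferring to Ledoux's monograph): split at the median into the sublevel and superlevel sets, each of measure at least $1/2$, use the $1$-Lipschitz bound to trap $\{f\geq m_f+\varepsilon\}$ and $\{f\leq m_f-\varepsilon\}$ outside the respective open $\varepsilon$-neighbourhoods, and add the two estimates. Your remark about the strict inequality matching the open-neighbourhood convention in the definition of $\alpha_X$ is exactly the right point of care; nothing is missing.
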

 \begin{lem}[{cf.~\cite[Section 1.3]{ledoux}}]\label{vel2}Let $X$ be an mm-space with $\mu_X(X)=1$. Assume that a function $\alpha:(0,+\infty)\to
          \mathbb{R}$ satisfies that
          \begin{align*}
             \mu_X (\{ x\in X \mid |f(x)-m_f|\geq \varepsilon \})\leq \alpha(\varepsilon)
           \end{align*}for any $1$-Lipschitz function $f:X\to
          \mathbb{R}$. Then, we have $\alpha_X(\varepsilon) \leq
          \alpha(\varepsilon)$. 
          \end{lem}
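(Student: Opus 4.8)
The plan is to prove the contrapositive-flavored estimate directly from the definition of $\alpha_X$. Fix $\varepsilon>0$ and let $A\subseteq X$ be any Borel set with $\mu_X(A)\geq 1/2$; I must show $\mu_X(X\setminus A_{+\varepsilon})\leq \alpha(\varepsilon)$, since taking the supremum over all such $A$ then yields $\alpha_X(\varepsilon)\leq \alpha(\varepsilon)$. The natural $1$-Lipschitz function to feed into the hypothesis is the distance function to $A$, namely $f(x):=\dist_X(x,A)$. This is $1$-Lipschitz by the triangle inequality, and it vanishes identically on $A$ (hence on $\overline{A}$).

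The key step is to identify a median of $f$. Since $f\leq 0$ is impossible and $f=0$ on $A$, we have $f_{\ast}(\mu_X)((-\infty,0])\geq \mu_X(A)\geq 1/2$, and trivially $f_{\ast}(\mu_X)([0,+\infty))=\mu_X(X)=1\geq 1/2$; therefore $m_f=0$ is a valid median of $f$. Now apply the hypothesis with this $f$: for every $\varepsilon>0$,
\begin{align*}
 \mu_X(\{x\in X \mid |f(x)-0|\geq \varepsilon\})\leq \alpha(\varepsilon).
\end{align*}
It remains only to observe that $\{x\in X \mid f(x)\geq \varepsilon\}\supseteq X\setminus A_{+\varepsilon}$: indeed, if $x\notin A_{+\varepsilon}$ then $\dist_X(x,a)\geq \varepsilon$ for all $a\in A$ (as $A_{+\varepsilon}$ is the open $\varepsilon$-neighbourhood), so $f(x)=\dist_X(x,A)\geq \varepsilon$. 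Since $f\geq 0$ everywhere, $\{x\mid |f(x)|\geq \varepsilon\}=\{x\mid f(x)\geq\varepsilon\}\supseteq X\setminus A_{+\varepsilon}$, whence $\mu_X(X\setminus A_{+\varepsilon})\leq \alpha(\varepsilon)$.

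Taking the supremum over all Borel sets $A$ with $\mu_X(A)\geq 1/2$ gives $\alpha_X(\varepsilon)\leq\alpha(\varepsilon)$, as desired. I do not anticipate a genuine obstacle here; the only mild subtlety is the bookkeeping around the median—one must be slightly careful that $0$ genuinely satisfies both defining inequalities of a median (which it does, since $\mu_X(A)\geq 1/2$ handles the left tail and the total mass $1$ handles the right tail), rather than assuming uniqueness of the median, which the excerpt explicitly warns against. A secondary point worth a sentence is the distinction between the open neighbourhood $A_{+\varepsilon}$ in the definition of $\alpha_X$ and the closed sublevel condition $f(x)\geq\varepsilon$, but these align precisely because the complement of the open $\varepsilon$-neighbourhood is exactly $\{x: \dist_X(x,A)\geq\varepsilon\}$.
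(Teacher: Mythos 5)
Your proof is correct and is precisely the standard argument (distance function to $A$, with $0$ as a median since $\mu_X(A)\geq 1/2$) that the paper implicitly relies on: the lemma is stated there without proof, with a citation to Ledoux, Section 1.3, where this same argument appears. Your care in verifying that $0$ is genuinely a median, rather than assuming uniqueness, and in matching the open neighbourhood $A_{+\varepsilon}$ with the set $\{f\geq\varepsilon\}$, is exactly the right bookkeeping.
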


           By Lemmas \ref{vel1} and \ref{vel2}, we obtain the following corollary:
           \begin{cor}[{\cite[Section 1.3]{ledoux}}]A sequence $\{X_n\}_{n=1}^{\infty}$ of mm-spaces is a
      L\'{e}vy family if and only if $\lim_{n\to
      \infty}\alpha_{X_n}(r)=0$ for any $r>0$.
      \end{cor}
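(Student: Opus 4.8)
The plan is to read the corollary directly off Lemmas \ref{vel1} and \ref{vel2} together with the definitions of the observable diameter and of a L\'{e}vy family; throughout one uses that each $X_n$ has total mass $1$, as is implicit once $\alpha_{X_n}$ is written.

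For the ``if'' direction I would start from the hypothesis $\lim_{n\to\infty}\alpha_{X_n}(r)=0$ for every $r>0$, fix $\kappa,\varepsilon>0$, and take an arbitrary $1$-Lipschitz function $f\colon X_n\to\mathbb{R}$ with a median $m_f$. Lemma \ref{vel1} gives $\mu_{X_n}(\{x\in X_n\mid |f(x)-m_f|\geq\varepsilon\})\leq 2\alpha_{X_n}(\varepsilon)$, so the interval $[m_f-\varepsilon,m_f+\varepsilon]$ carries $f_{\ast}(\mu_{X_n})$-mass at least $1-2\alpha_{X_n}(\varepsilon)$. Once $n$ is large enough that $2\alpha_{X_n}(\varepsilon)\leq\kappa$, this forces $\diam(f_{\ast}(\mu_{X_n}),1-\kappa)\leq 2\varepsilon$; taking the supremum over all such $f$ gives $\obs_{\mathbb{R}}(X_n;-\kappa)\leq 2\varepsilon$ for all large $n$, and then letting $n\to\infty$ and afterwards $\varepsilon\to 0$ yields $\lim_{n\to\infty}\obs_{\mathbb{R}}(X_n;-\kappa)=0$, i.e.\ $\{X_n\}$ is a L\'{e}vy family.

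For the ``only if'' direction, assuming $\{X_n\}$ is a L\'{e}vy family, I would argue by contradiction: if $\alpha_{X_{n_k}}(r)\geq\delta$ along some subsequence for some $r,\delta>0$, then by the definition of the concentration function one can choose Borel sets $A_k\subseteq X_{n_k}$ with $\mu_{X_{n_k}}(A_k)\geq 1/2$ and $\mu_{X_{n_k}}(X_{n_k}\setminus(A_k)_{+r})\geq\delta/2$. The $1$-Lipschitz test functions $f_k:=\dist_{X_{n_k}}(\,\cdot\,,A_k)$ vanish on $A_k$ and are $\geq r$ off $(A_k)_{+r}$, so $(f_k)_{\ast}(\mu_{X_{n_k}})$ puts mass $\geq 1/2$ on $\{0\}$ and mass $\geq\delta/2$ on $[r,+\infty)$. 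Hence for any $\kappa$ with $0<\kappa<\min\{1/2,\delta/2\}$, every Borel set of $(f_k)_{\ast}(\mu_{X_{n_k}})$-mass at least $1-\kappa$ must contain $0$ and meet $[r,+\infty)$, so has diameter $\geq r$; thus $\obs_{\mathbb{R}}(X_{n_k};-\kappa)\geq r$ for all $k$, contradicting the L\'{e}vy family property. Alternatively, one can avoid contradiction: for a $1$-Lipschitz $f$ with median $m_f$, applying the smallness of $\obs_{\mathbb{R}}(X_n;-\kappa)$ to the $1$-Lipschitz functions $(f-m_f)^{+}$ and $(f-m_f)^{-}$ (each of which has $0$ as a median) bounds $\mu_{X_n}(\{|f-m_f|\geq\varepsilon\})$ for $\varepsilon$ slightly above $\obs_{\mathbb{R}}(X_n;-\kappa)$, and Lemma \ref{vel2} then promotes this to $\alpha_{X_n}(\varepsilon)\to 0$.

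I do not expect a genuine obstacle here; the only points needing care are the order of the quantifiers $\kappa$, $\varepsilon$, $n$ in the limits, and the fact that the partial diameter is an infimum that may fail to be attained, so in the second direction one should argue with arbitrary Borel sets of measure at least $1-\kappa$ rather than with a literally optimal set $Y_0$.
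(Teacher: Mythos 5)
Your proposal is correct, and your ``if'' direction is exactly the paper's intended argument: Lemma \ref{vel1} turns $\alpha_{X_n}(\varepsilon)\to 0$ into concentration of every $1$-Lipschitz function about its median, uniformly in $f$, hence $\obs_{\mathbb{R}}(X_n;-\kappa)\to 0$. For the converse the paper simply invokes Lemma \ref{vel2}; your primary argument instead re-proves the needed implication from scratch by testing against the $1$-Lipschitz functions $\dist_{X_{n_k}}(\cdot,A_k)$, which is a perfectly valid and self-contained route but bypasses Lemma \ref{vel2} (in effect you are reproving the isoperimetric content that Lemma \ref{vel2} packages). Your ``alternative'' converse, splitting $f-m_f$ into its positive and negative parts (each $1$-Lipschitz with $0$ as a median) and then applying Lemma \ref{vel2}, is the argument the paper has in mind; the splitting is the right move, since $0$ need not be a median of $|f-m_f|$ itself. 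Your cautionary remarks about quantifier order and about arguing with arbitrary Borel sets of mass $\geq 1-\kappa$ rather than an optimal one are exactly the points that need care, and you handle them correctly.
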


         Combining Lemma \ref{vel1} with Examples \ref{exl1} and \ref{exl2}, we
         obtain the following corollaries: 
         \begin{cor}Let $M$ be a closed Riemannian manifold such that
          $\ric_M \geq \widetilde{\kappa}_1>0$. Then, for any $\kappa
          >0$, we have
          \begin{align*}
           \obs_{\mathbb{R}}(M;-\kappa)\leq 2\sqrt{\frac{2\log \big(\frac{2}{\kappa}\big)}{\widetilde{\kappa}_1}}.
           \end{align*}In particular, we have
          \begin{align*}
           \obs_{\mathbb{R}}(SO(n);-\kappa)\leq 4\sqrt{\frac{2\log\big(\frac{2}{\kappa}\big)}{n-1}}.
           \end{align*}
          \end{cor}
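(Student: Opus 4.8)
The plan is to combine the concentration estimate of Lemma~\ref{vel1} with the Gaussian-type bound on $\alpha_M$ recorded in Example~\ref{exl1}, and then optimize the radius of the neighbourhood.

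First I would fix an arbitrary $1$-Lipschitz function $f:M\to \mathbb{R}$ together with a median $m_f$ of $f$. Combining Lemma~\ref{vel1} with the estimate $\alpha_M(r)\leq e^{-\widetilde{\kappa}_1 r^2/2}$ of Example~\ref{exl1}, we obtain, for every $\varepsilon>0$,
\[
 \mu_M(\{ x\in M \mid |f(x)-m_f|\geq \varepsilon \})\leq 2\alpha_M(\varepsilon)\leq 2e^{-\widetilde{\kappa}_1\varepsilon^2/2}.
\]
The next step is to choose $\varepsilon$ so that the right-hand side is at most $\kappa$: solving $2e^{-\widetilde{\kappa}_1\varepsilon^2/2}\leq \kappa$ gives $\varepsilon\geq \sqrt{2\log(2/\kappa)/\widetilde{\kappa}_1}$, so I would set $\varepsilon_0:=\sqrt{2\log(2/\kappa)/\widetilde{\kappa}_1}$. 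When $\kappa\geq 1$ one has $m_M-\kappa\leq 0$, so the partial diameter is automatically $0$ and there is nothing to prove; hence one may assume $0<\kappa<1$, in which case $\varepsilon_0$ is a well-defined positive real.

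Next I would consider the interval $Y_0:=[m_f-\varepsilon_0,\,m_f+\varepsilon_0]\subseteq \mathbb{R}$, whose diameter equals $2\varepsilon_0$. Since $f^{-1}(\mathbb{R}\setminus Y_0)\subseteq \{ x\in M\mid |f(x)-m_f|\geq \varepsilon_0 \}$, the displayed inequality yields
\[
 f_{\ast}(\mu_M)(Y_0)=\mu_M(f^{-1}(Y_0))\geq 1-2e^{-\widetilde{\kappa}_1\varepsilon_0^2/2}\geq 1-\kappa=m_M-\kappa,
\]
so that $\diam(f_{\ast}(\mu_M),m_M-\kappa)\leq \diam Y_0=2\varepsilon_0$. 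Taking the supremum over all $1$-Lipschitz maps $f:M\to \mathbb{R}$ gives $\obs_{\mathbb{R}}(M;-\kappa)\leq 2\varepsilon_0=2\sqrt{2\log(2/\kappa)/\widetilde{\kappa}_1}$, which is the first bound. For the particular case, I would substitute the inequality $\ric_{SO(n)}\geq (n-1)/4$ recorded in Example~\ref{exl1} (that is, take $\widetilde{\kappa}_1=(n-1)/4$) into the first bound and simplify, using $2\sqrt{2\log(2/\kappa)/((n-1)/4)}=4\sqrt{2\log(2/\kappa)/(n-1)}$.

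Every step is an elementary manipulation, so I do not anticipate a genuine obstacle; the only points requiring care are the direction of the inequalities when passing from the tail set $\{|f-m_f|\geq \varepsilon_0\}$ to the closed interval $Y_0$ (via the inclusion $f^{-1}(\mathbb{R}\setminus Y_0)\subseteq\{|f-m_f|\geq\varepsilon_0\}$), and the degenerate range $\kappa\geq 1$, both of which are dispatched as above.
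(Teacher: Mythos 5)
Your proposal is correct and follows exactly the route the paper intends: the paper derives this corollary by "combining Lemma \ref{vel1} with Example \ref{exl1}" and leaves the details to the reader, and your argument (choosing $\varepsilon_0=\sqrt{2\log(2/\kappa)/\widetilde{\kappa}_1}$, bounding the mass of the complement of $[m_f-\varepsilon_0,m_f+\varepsilon_0]$, and substituting $\widetilde{\kappa}_1=(n-1)/4$ for $SO(n)$) is precisely that computation. Your handling of the degenerate range $\kappa\geq 1$ is a careful touch the paper omits.
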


          \begin{cor}Let $M$ be a closed Riemannian manifold. Then, for
           any $\kappa >0$, we have
           \begin{align*}
            \obs_{\mathbb{R}}(M;-\kappa)\leq \frac{6 \log \big( \frac{2}{\kappa}\big)}{\sqrt{\lambda_1(M)}}.
            \end{align*}
           In particular, we have
           \begin{align*}
            \obs_{\mathbb{R}}(\mathbb{T}^n;-\kappa)\leq 6 \log \Big(\frac{2}{\kappa}\Big).
            \end{align*}
           \end{cor}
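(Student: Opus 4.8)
The plan is to read off the corollary from Lemma \ref{vel1} together with the concentration estimate recalled in Example \ref{exl2}, in exact parallel with the preceding corollary. Fix $\kappa>0$ and let $f\colon M\to\mathbb{R}$ be an arbitrary $1$-Lipschitz function with a median $m_f$. Combining Lemma \ref{vel1} with the bound $\alpha_M(\varepsilon)\leq e^{-\sqrt{\lambda_1(M)}\,\varepsilon/3}$ from Example \ref{exl2}, I obtain $\mu_M(\{x\in M \mid |f(x)-m_f|\geq \varepsilon\})\leq 2\alpha_M(\varepsilon)\leq 2e^{-\sqrt{\lambda_1(M)}\,\varepsilon/3}$ for every $\varepsilon>0$.

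The second step is a choice of scale: set $\varepsilon_0:=3\log(2/\kappa)/\sqrt{\lambda_1(M)}$, so that $2e^{-\sqrt{\lambda_1(M)}\,\varepsilon_0/3}=\kappa$. Then $\mu_M(\{x\in M\mid |f(x)-m_f|<\varepsilon_0\})\geq 1-\kappa$, and the image of this set under $f$ is contained in the closed interval $[m_f-\varepsilon_0,\,m_f+\varepsilon_0]$, which is a Borel subset of $\mathbb{R}$ of diameter $2\varepsilon_0$. Hence $f_{\ast}(\mu_M)([m_f-\varepsilon_0,m_f+\varepsilon_0])\geq 1-\kappa$, and by the definition of the partial diameter $\diam(f_{\ast}(\mu_M),1-\kappa)\leq 2\varepsilon_0 = 6\log(2/\kappa)/\sqrt{\lambda_1(M)}$. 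Taking the supremum over all $1$-Lipschitz maps $f\colon M\to\mathbb{R}$ gives $\obs_{\mathbb{R}}(M;-\kappa)\leq 6\log(2/\kappa)/\sqrt{\lambda_1(M)}$, which is the first asserted inequality.

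For the ``in particular'' clause it only remains to substitute the value $\lambda_1(\mathbb{T}^n)=\lambda_1(\mathbb{S}^1)=1$ recalled in Example \ref{exl2} into the first inequality, which yields $\obs_{\mathbb{R}}(\mathbb{T}^n;-\kappa)\leq 6\log(2/\kappa)$.

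I do not anticipate any genuine obstacle: the argument is a direct transcription of the proof of the preceding corollary, with the Lichnerowicz-type bound of Example \ref{exl2} in place of the L\'{e}vy--Gromov bound of Example \ref{exl1}. The only points deserving a moment's attention are the use of the strict inequality $|f-m_f|<\varepsilon_0$ (so that the complementary set has $\mu_M$-measure at least $1-\kappa$) and the tacit restriction to $\kappa<2$, the range in which the right-hand side is positive and the statement is non-vacuous.
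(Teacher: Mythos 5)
Your argument is exactly the one the paper intends: the corollary is stated as an immediate consequence of combining Lemma \ref{vel1} with the bound $\alpha_M(\varepsilon)\leq e^{-\sqrt{\lambda_1(M)}\varepsilon/3}$ of Example \ref{exl2}, and your choice of scale $\varepsilon_0=3\log(2/\kappa)/\sqrt{\lambda_1(M)}$ together with the passage from the median-concentration inequality to the partial diameter is the standard (and correct) way to carry that out. The computation checks out, so nothing further is needed.
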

      
\subsection{Concentration and separation}

In this section, we recall the notion of the separation distance for an
     mm-space which was introduced in \cite{gromov}. We review
     relationships between the observable diameter and the separation
     distance. The separation distance plays an important role
     throughout this paper. 

     Let $X$ be an
      mm-space. For $\kappa_1,  \kappa_2\geq 0$, we
      define the \emph{separation distance} $\sep
      (X;\kappa_1,\kappa_2)= \sep (\mu_X;\kappa_1,\kappa_2)$ of $X$ as the supremum of
      the distance $\dist_X(A,B)$, where $A$ and $B$ are Borel subsets
      of $X$ satisfying that $\mu_X(A)\geq \kappa_1$ and $\mu_X(B)\geq \kappa_2$.

     Relationships between the observable diameter and the separation distance
     are followings. We refer to \cite[Subsection 2.2]{funad} for precise proofs.
     	\begin{lem}[{cf.~\cite[Section $3\frac{1}{2}.33$]{gromov}}]\label{noranoraneko}Let $X$ be an mm-space and $\kappa,\kappa' >0$ with $\kappa > \kappa'$. Then we have
	 \begin{align*}
	  \obs_{\mathbb{R}} (X ;-\kappa')\geq \sep (X;\kappa,\kappa).
	  \end{align*}
         \end{lem}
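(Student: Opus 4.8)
The plan is to test the observable diameter against a single, carefully chosen $1$-Lipschitz function, namely a distance function to one of the two well-separated sets. This is the standard trick for turning a separation estimate into an observable-diameter estimate.

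Fix $\varepsilon>0$ and choose Borel subsets $A,B\subseteq X$ with $\mu_X(A)\geq \kappa$, $\mu_X(B)\geq \kappa$, and $\dist_X(A,B)\geq \sep(X;\kappa,\kappa)-\varepsilon$ (if no such pair of sets exists the asserted inequality is vacuous). Define $f\colon X\to\mathbb{R}$ by $f(x):=\dist_X(x,A)$. Then $f$ is $1$-Lipschitz, it vanishes identically on $A$, and $f(x)\geq \dist_X(A,B)$ for every $x\in B$; in particular $f$ is a legitimate competitor in the definition of $\obs_{\mathbb{R}}(X;-\kappa')$.

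The heart of the argument is to bound the partial diameter $\diam(f_{\ast}(\mu_X),m_X-\kappa')$ from below by $\dist_X(A,B)$. Let $Y_0\subseteq\mathbb{R}$ be any Borel set with $f_{\ast}(\mu_X)(Y_0)=\mu_X(f^{-1}(Y_0))\geq m_X-\kappa'$, so that $\mu_X(X\setminus f^{-1}(Y_0))\leq \kappa'$. Since $\kappa>\kappa'$, this gives
\begin{align*}
 \mu_X\bigl(A\cap f^{-1}(Y_0)\bigr)\geq \mu_X(A)-\kappa'\geq \kappa-\kappa'>0,\qquad
 \mu_X\bigl(B\cap f^{-1}(Y_0)\bigr)\geq \kappa-\kappa'>0,
\end{align*}
so both intersections are non-empty. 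Picking $a\in A\cap f^{-1}(Y_0)$ and $b\in B\cap f^{-1}(Y_0)$, we have $0=f(a)\in Y_0$ and $f(b)\in Y_0$ with $f(b)\geq \dist_X(A,B)$, whence $\diam Y_0\geq f(b)-f(a)\geq \dist_X(A,B)$. Taking the infimum over all admissible $Y_0$ yields $\diam(f_{\ast}(\mu_X),m_X-\kappa')\geq \dist_X(A,B)$, and therefore $\obs_{\mathbb{R}}(X;-\kappa')\geq \dist_X(A,B)\geq \sep(X;\kappa,\kappa)-\varepsilon$. Letting $\varepsilon\to 0$ gives the claim.

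The proof is essentially measure-theoretic bookkeeping; the only place requiring any care is the estimate $\mu_X(X\setminus f^{-1}(Y_0))\leq \kappa'$, which is precisely where the strict inequality $\kappa>\kappa'$ is used to guarantee that $f^{-1}(Y_0)$ still meets both $A$ and $B$ in positive measure. I do not expect any genuine obstacle.
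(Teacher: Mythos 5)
Your proof is correct, and it is the standard argument that the paper itself omits (it defers to \cite[Subsection 2.2]{funad}): test $\obs_{\mathbb{R}}(X;-\kappa')$ against the $1$-Lipschitz function $x\mapsto\dist_X(x,A)$ and use $\kappa>\kappa'$ to force any Borel set of $f_{\ast}(\mu_X)$-measure at least $m_X-\kappa'$ to meet both $f(A)$ and $f(B)$. No gaps.
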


         	\begin{rem}\upshape In {\cite[Section $3\frac{1}{2}.33$]{gromov}}, Lemma \ref{noranoraneko} is stated as $\kappa =\kappa'$, but that is not true in general. For example, let
	 $X:=\{ x_1 , x_2\}$, $\dist_X (x_1,x_2):=1$, and $\mu_X (\{ x_1\})=\mu_X (\{  x_2   \}):= 1/2$. Putting $\kappa =\kappa'=1/2$, we have
	 $\obs_{\mathbb{R}} (X;-1/2)=0$ and $\sep
	 (X;1/2,1/2)=1$.
	 \end{rem}

   \begin{lem}[cf.~{\cite[Section
    $3\frac{1}{2}.33$]{gromov}}]\label{l2.1.2}Let $\nu$ be a Borel measure on
    $\mathbb{R}$ with $m:=\nu(\mathbb{R})<+\infty$. Then, for any $\kappa >0$ we have
	 \begin{align*}
	  \diam (\nu, m-2\kappa)\leq \sep (\nu; \kappa, \kappa).
      \end{align*}
      In particular, for any $\kappa >0$ we have
      \begin{align*}
        \obs_{\mathbb{R}}(X;-2\kappa)\leq \sep (X; \kappa, \kappa).
       \end{align*}
    \end{lem}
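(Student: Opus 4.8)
The plan is to exhibit, for each fixed $\kappa>0$, an explicit closed interval realizing the partial diameter. We may assume $2\kappa<m$, since otherwise a one-point set is a Borel set of $\nu$-measure $\ge m-2\kappa$, so $\diam(\nu,m-2\kappa)=0\le\sep(\nu;\kappa,\kappa)$. Set $a:=\sup\{t\in\mathbb{R}\mid \nu((-\infty,t))\le\kappa\}$ and $b:=\inf\{t\in\mathbb{R}\mid \nu((t,\infty))\le\kappa\}$. Since $t\mapsto\nu((-\infty,t))$ increases from $0$ to $m>\kappa$, and dually for $t\mapsto\nu((t,\infty))$, both $a$ and $b$ are finite, and continuity of $\nu$ along monotone sequences of sets gives $\nu((-\infty,a))\le\kappa$ and $\nu((b,\infty))\le\kappa$.

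I would then check $a\le b$: if $a>b$, choosing $b<t_1<t_2<a$ yields $\mathbb{R}=(-\infty,t_2)\cup(t_1,\infty)$ with each piece of $\nu$-measure $\le\kappa$, forcing $m\le2\kappa$, a contradiction. Hence $(-\infty,a)$, $[a,b]$, $(b,\infty)$ partition $\mathbb{R}$, so $\nu([a,b])=m-\nu((-\infty,a))-\nu((b,\infty))\ge m-2\kappa$, and it remains to bound $b-a$ by $\sep(\nu;\kappa,\kappa)$, which is the heart of the argument. If $a=b$ this is trivial; otherwise, for $0<\varepsilon<(b-a)/2$ put $A_\varepsilon:=(-\infty,a+\varepsilon)$ and $B_\varepsilon:=(b-\varepsilon,\infty)$. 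By maximality of $a$ and minimality of $b$ we have $\nu(A_\varepsilon)>\kappa$ and $\nu(B_\varepsilon)>\kappa$, while $\dist_{\mathbb{R}}(A_\varepsilon,B_\varepsilon)=(b-a)-2\varepsilon$; hence $\sep(\nu;\kappa,\kappa)\ge(b-a)-2\varepsilon$, and letting $\varepsilon\to0$ gives $b-a\le\sep(\nu;\kappa,\kappa)$. Taking $Y_0:=[a,b]$ in the definition of the partial diameter proves the first assertion.

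For the second (displayed) assertion, I would apply the first assertion to $\nu:=f_{\ast}(\mu_X)$ for an arbitrary $1$-Lipschitz map $f:X\to\mathbb{R}$, so that $\nu(\mathbb{R})=\mu_X(X)=m_X$, and then use $\sep(f_{\ast}(\mu_X);\kappa,\kappa)\le\sep(X;\kappa,\kappa)$: for Borel $A,B\subseteq\mathbb{R}$ with $f_{\ast}(\mu_X)(A),f_{\ast}(\mu_X)(B)\ge\kappa$, the preimages $f^{-1}(A),f^{-1}(B)$ are Borel, have $\mu_X$-measure $\ge\kappa$, and satisfy $\dist_X(f^{-1}(A),f^{-1}(B))\ge\dist_{\mathbb{R}}(A,B)$ because $f$ is $1$-Lipschitz. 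Thus $\diam(f_{\ast}(\mu_X),m_X-2\kappa)\le\sep(X;\kappa,\kappa)$, and taking the supremum over all such $f$ gives $\obs_{\mathbb{R}}(X;-2\kappa)\le\sep(X;\kappa,\kappa)$. I expect the only mildly delicate points to be the continuity-of-measure computations identifying $\nu((-\infty,a))\le\kappa$ and $\nu((b,\infty))\le\kappa$, together with keeping the open/closed bookkeeping of the rays $A_\varepsilon,B_\varepsilon$ straight — which is precisely where the loss of $2\kappa$, rather than $\kappa$, is incurred.
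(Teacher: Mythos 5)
Your argument is correct and complete: trimming at most $\kappa$ of mass from each tail, showing the middle interval $[a,b]$ carries measure at least $m-2\kappa$, and bounding $b-a$ by $\sep(\nu;\kappa,\kappa)$ via the sets $(-\infty,a+\varepsilon)$ and $(b-\varepsilon,+\infty)$ is exactly the standard proof, and your reduction of the ``in particular'' clause via $\sep(f_{\ast}(\mu_X);\kappa,\kappa)\leq\sep(X;\kappa,\kappa)$ is the intended one. The paper itself omits the proof (deferring to \cite[Subsection 2.2]{funad}), so there is nothing to contrast with; your write-up matches the argument given there.
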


    \begin{cor}[{cf.~\cite[Section $3\frac{1}{2}.33$]{gromov}}]\label{c2.1.1}A sequence $\{ X_n\}_{n=1}^{\infty}$ of mm-spaces is a
     L\'{e}vy family if and only if $\lim_{n\to \infty}\sep
     (X_n;\kappa,\kappa) =0$ for any $\kappa >0$. 
     \end{cor}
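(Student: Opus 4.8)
The plan is to derive both implications directly from the two squeeze-type estimates relating the observable diameter and the separation distance, namely Lemmas \ref{noranoraneko} and \ref{l2.1.2}, together with the definition of a L\'{e}vy family as a sequence with $\lim_{n\to\infty}\obs_{\mathbb{R}}(X_n;-\kappa)=0$ for all $\kappa>0$.

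First I would treat the ``only if'' direction. Assume $\{X_n\}_{n=1}^{\infty}$ is a L\'{e}vy family. Fix an arbitrary $\kappa>0$ and put $\kappa':=\kappa/2$, so that $\kappa>\kappa'>0$. Lemma \ref{noranoraneko} applied to each $X_n$ yields $\obs_{\mathbb{R}}(X_n;-\kappa')\geq \sep(X_n;\kappa,\kappa)\geq 0$, and since the left-hand side tends to $0$ as $n\to\infty$ by the L\'{e}vy property, we get $\lim_{n\to\infty}\sep(X_n;\kappa,\kappa)=0$. As $\kappa>0$ was arbitrary, this half is done. It is essential here to choose $\kappa'$ strictly smaller than $\kappa$, since the inequality in Lemma \ref{noranoraneko} may fail when $\kappa'=\kappa$ (as the Remark following it shows); but this costs nothing, because the L\'{e}vy hypothesis already provides $\obs_{\mathbb{R}}(X_n;-\kappa/2)\to 0$.

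For the ``if'' direction, assume $\lim_{n\to\infty}\sep(X_n;\kappa,\kappa)=0$ for every $\kappa>0$. Fix $\kappa>0$ and apply the second inequality of Lemma \ref{l2.1.2} with $\kappa/2$ in place of $\kappa$, obtaining $\obs_{\mathbb{R}}(X_n;-\kappa)\leq \sep(X_n;\kappa/2,\kappa/2)$. The right-hand side tends to $0$ by hypothesis, hence so does $\obs_{\mathbb{R}}(X_n;-\kappa)$; since $\kappa>0$ was arbitrary, $\{X_n\}$ is a L\'{e}vy family.

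There is essentially no genuine obstacle: the corollary is a formal consequence of the two lemmas. The only point needing a little care is the mismatch of parameters --- the lower bound requires a strictly smaller parameter and the upper bound carries a factor $2$ --- and one should note that these mismatches are harmless precisely because being a L\'{e}vy family is a condition quantified over \emph{all} positive $\kappa$, so replacing $\kappa$ by $\kappa/2$ or by $2\kappa$ leaves the class of sequences unchanged.
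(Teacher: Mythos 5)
Your proof is correct and is essentially the argument the paper intends: the corollary is stated as an immediate consequence of Lemmas \ref{noranoraneko} and \ref{l2.1.2}, and you combine them exactly as required, including the necessary care in choosing $\kappa'<\kappa$ for the lower bound and absorbing the factor $2$ in the upper bound. Nothing further is needed.
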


\subsection{Compact metric group action and diameter of a measure}

     Let a compact metric group $G$ continuously acts on a metric space $X$. For each $\eta >0$, we define a (possibly infinite) number $\rho(\eta)= \rho^{(G,X)}(\eta)$
as the supremum of $\dist_X(gx,gy)$ for all $g\in G$ and $x,y\in X$
with $\dist_X(x,y)\leq \eta$. Given a point $x\in X$, we indicate by
$f_x:G\to X$ the orbit map of $x$, that is, $f_x(g):=gx$ for any $g\in
G$. For the Haar measure $\mu_G$ on $G$ normalized as $\mu_G(G)=1$, we
put $\nu_{G,x}:=(f_x)_{\ast}(\mu_G)$. 
      \begin{prop}\label{p3.1}Assume that $\nu_{G,x}(B_X(y,\delta))>1/2$ for some
       $y\in X$ and $\delta >0$. 
       Then, we have
       \begin{align}\label{s3.1}
        \dist_X(y,gy)\leq \delta + \rho(\delta)
        \end{align}for any $g\in G$. Moreover, there exists a point $x_0 \in Gx$ such that
       \begin{align}\label{s3.2}
      \dist_X (x_0,gx_0) \leq \min \{
        2\delta + \rho(2\delta),2\delta + 2\rho(\delta)   \}
        \end{align}for any $g\in G$.
       \begin{proof}Taking any $g\in G$, we first prove (\ref{s3.1}). Since $gB_X(y,\delta)\subseteq
        B_X(gy, \rho (\delta))$ and the measure $\nu_{G,x}$ is
        $G$-invariant, from the assumption, we have
        \begin{align*}
         \nu_{G,x}(B_X(gy,\rho(\delta)))\geq \nu_{G,x}(gB_X(y,\delta))=\nu_{G,x}(B_X(y,\delta))>1/2.
         \end{align*}Combining this with $\nu_{G,x}(B_X(y,\eta))>1/2$,
        we get $\nu_{G,x}(B_X(y,\delta)\cap B_X(gy,\rho(\delta)))>0$, which
        implies (\ref{s3.1}).

        We next prove (\ref{s3.2}). Since the orbit $Gx$ is compact,
        the support of the measure $\nu_{G,x}$ is included in
        $Gx$. Hence, there exists a point $x_0 \in B_X(y,\delta)\cap
        Gx$. Let $g\in G$. Since $\nu_{G,x}(B_X(x_0,2\delta))\geq
        \nu_{G,x}(B_X(x_0,2\delta))>0$, by using (\ref{s3.1}), we obtain
        $\dist_X(x_0,gx_0)\leq 2\delta +\rho(2\delta)$. We also have
        \begin{align*}
         \dist_X(x_0,gx_0)\leq \ &\dist_X(x_0,y)+ \dist_X(y,gy)+
         \dist_X(gy, g x_0)\\
         \leq \ & \delta + (\delta + \rho(\delta))+ \rho(\delta)\\
         = \ & 2\delta+ 2\rho(\delta),
         \end{align*}which implies (\ref{s3.2}). This completes the proof.
        \end{proof}
       \end{prop}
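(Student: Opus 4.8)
The plan is to exploit the $G$-invariance of the pushforward measure $\nu_{G,x}$ together with the elementary observation that two Borel subsets of a probability space, each of measure strictly greater than $1/2$, must intersect in a set of positive measure.

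First I would establish (\ref{s3.1}). The key preliminary remark is that $\nu_{G,x}$ is $G$-invariant: for $h\in G$ one has $h\cdot f_x(g)=hgx=f_x(hg)$, so pushing forward $\mu_G$ and using the left-invariance of the Haar measure gives $h_{\ast}\nu_{G,x}=\nu_{G,x}$. Now fix $g\in G$. By the definition of $\rho$ (and the fact that $\rho$ is nondecreasing), every point of $gB_X(y,\delta)$ lies within $\rho(\delta)$ of $gy$, i.e.\ $gB_X(y,\delta)\subseteq B_X(gy,\rho(\delta))$. Hence, by $G$-invariance, $\nu_{G,x}(B_X(gy,\rho(\delta)))\geq \nu_{G,x}(gB_X(y,\delta))=\nu_{G,x}(B_X(y,\delta))>1/2$. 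Since $B_X(y,\delta)$ also has $\nu_{G,x}$-measure greater than $1/2$, the two balls meet in a set of positive measure; choosing a point $z$ in the intersection and applying the triangle inequality $\dist_X(y,gy)\leq\dist_X(y,z)+\dist_X(z,gy)\leq\delta+\rho(\delta)$ yields (\ref{s3.1}).

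For (\ref{s3.2}), the first step is to locate $x_0$. Since $G$ is compact and the action is continuous, the orbit $Gx$ is compact, hence closed, so $\supp\nu_{G,x}\subseteq Gx$; as $\nu_{G,x}(B_X(y,\delta))>0$, the open ball $B_X(y,\delta)$ must meet $\supp\nu_{G,x}$, and we may pick $x_0\in B_X(y,\delta)\cap Gx$. Then $B_X(y,\delta)\subseteq B_X(x_0,2\delta)$, so $\nu_{G,x}(B_X(x_0,2\delta))>1/2$, and applying (\ref{s3.1}) with $(y,\delta)$ replaced by $(x_0,2\delta)$ gives $\dist_X(x_0,gx_0)\leq 2\delta+\rho(2\delta)$ for every $g$. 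For the other term in the minimum I would simply chain the triangle inequality $\dist_X(x_0,gx_0)\leq\dist_X(x_0,y)+\dist_X(y,gy)+\dist_X(gy,gx_0)$, bounding the first summand by $\delta$, the middle one by (\ref{s3.1}), and the last one by $\rho(\delta)$ (since $\dist_X(y,x_0)<\delta$), obtaining $2\delta+2\rho(\delta)$. Taking the minimum of the two estimates finishes the proof.

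The argument is essentially routine; the only points demanding any care are the $G$-invariance of $\nu_{G,x}$, the inclusion $\supp\nu_{G,x}\subseteq Gx$ (which is where compactness of the orbit enters), and the monotonicity of $\rho$ needed for the ball inclusions. I do not anticipate a genuine obstacle, the most delicate step being the passage through $\supp\nu_{G,x}$ to produce the honest orbit point $x_0$.
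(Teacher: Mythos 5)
Your proof is correct and follows essentially the same route as the paper's: $G$-invariance of $\nu_{G,x}$ plus the fact that two sets of measure $>1/2$ must intersect gives (\ref{s3.1}), and the orbit point $x_0\in B_X(y,\delta)\cap Gx$ obtained from $\supp\nu_{G,x}\subseteq Gx$ yields both bounds in (\ref{s3.2}) exactly as in the paper. Your write-up is in fact slightly cleaner at the step where the paper's inequality $\nu_{G,x}(B_X(x_0,2\delta))\geq\nu_{G,x}(B_X(x_0,2\delta))>0$ contains an evident typo; your observation that $B_X(y,\delta)\subseteq B_X(x_0,2\delta)$ gives measure $>1/2$ is what is actually needed to invoke (\ref{s3.1}).
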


        \begin{prop}\label{p3.2}Assume that $\nu_{G,x}(A)>1/2$ for some Borel subset $A\subseteq X$. Then, there exists a point $x_0\in Gx$ such that
         \begin{align*}
          \dist_X(x_0,gx_0)\leq \diam A+\rho(\diam A)
          \end{align*}for any $g\in G$.
         \begin{proof}Since $A\cap Gx\neq \emptyset$, the claim follows
          from the same argument in the proof of Proposition \ref{p3.1}.
          \end{proof}
         \end{prop}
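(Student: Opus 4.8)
The plan is to reduce to the argument already used for \eqref{s3.1} in Proposition \ref{p3.1}: I would first show that the Borel set $A$ meets the orbit $Gx$, fix a point $x_0$ in the intersection, and then rerun the measure-overlap estimate of \eqref{s3.1} with $A$ in place of the ball $B_X(y,\delta)$ and $\diam A$ in place of $\delta$. Since the action of $G$ preserves distances only up to the modulus $\rho$, the ball $gA$ around $gx_0$ picks up a radius $\rho(\diam A)$, and the triangle inequality then produces the bound $\diam A+\rho(\diam A)$.

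The one new ingredient is the nonemptiness of $A\cap Gx$, which I would establish as follows. Since $G$ is compact and the action is continuous, the orbit $Gx=f_x(G)$ is a compact, hence closed, subset of $X$, and
\[
\nu_{G,x}(Gx)=(f_x)_\ast(\mu_G)(Gx)=\mu_G\bigl(f_x^{-1}(Gx)\bigr)=\mu_G(G)=1,
\]
so $\nu_{G,x}$ is concentrated on $Gx$. As $\nu_{G,x}(A)>1/2>0=\nu_{G,x}(X\setminus Gx)$, we get $\nu_{G,x}(A\cap Gx)=\nu_{G,x}(A)>1/2$; in particular $A\cap Gx\neq\emptyset$, and I fix $x_0\in A\cap Gx$. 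In Proposition \ref{p3.1} the point $x_0$ was produced in exactly this way from $B_X(y,\delta)\cap Gx$, using that the support of $\nu_{G,x}$ lies in $Gx$; the same support argument applies to any Borel set $A$, which is the content of the phrase ``the same argument''.

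Then I would run the overlap estimate. Fix $g\in G$. For every $a\in A$ one has $\dist_X(a,x_0)\leq\diam A$, hence $\dist_X(ga,gx_0)\leq\rho(\diam A)$; thus $gA$ is contained in the closed ball $C_g:=\{z\in X\mid\dist_X(z,gx_0)\leq\rho(\diam A)\}$. By the $G$-invariance of $\nu_{G,x}$,
\[
\nu_{G,x}(C_g)\geq\nu_{G,x}(gA)=\nu_{G,x}(A)>1/2.
\]
Likewise $A$ is contained in the closed ball $C:=\{z\in X\mid\dist_X(z,x_0)\leq\diam A\}$, so $\nu_{G,x}(C)>1/2$. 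Since $\nu_{G,x}(C)+\nu_{G,x}(C_g)>1=\nu_{G,x}(X)$, the sets $C$ and $C_g$ must intersect; choosing $z\in C\cap C_g$ and applying the triangle inequality gives
\[
\dist_X(x_0,gx_0)\leq\dist_X(x_0,z)+\dist_X(z,gx_0)\leq\diam A+\rho(\diam A),
\]
which is the assertion, with $x_0\in Gx$ by construction.

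There is essentially no obstacle here beyond a little bookkeeping with open versus closed balls: it is the use of the \emph{closed} balls $C$ and $C_g$ (rather than open ones) that delivers the stated constant $\diam A+\rho(\diam A)$ exactly. One could instead quote Proposition \ref{p3.1} as a black box via $A\subseteq B_X(x_0,\diam A+\varepsilon)$ and let $\varepsilon\downarrow 0$, but that would force a discussion of the right-continuity of $\eta\mapsto\rho(\eta)$ at $\eta=\diam A$, so the direct rerun above is preferable. (If $\diam A=+\infty$ or $\rho(\diam A)=+\infty$ the inequality is trivial, so one may assume both are finite.)
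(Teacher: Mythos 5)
Your proof is correct and follows exactly the route the paper intends: you verify $A\cap Gx\neq\emptyset$ via the support/pushforward argument, pick $x_0$ there, and rerun the measure-overlap estimate from the proof of \eqref{s3.1} with $A$ in place of the ball and $\diam A$ in place of $\delta$. The paper's own proof is just the one-line reference to Proposition \ref{p3.1}; your write-up supplies precisely the details that reference elides, with no substantive difference.
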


         For any $\eta>0$, we put $\rho(+\eta):=\lim_{\eta'\downarrow
         \eta}\omega_x(\eta')$. 
         \begin{cor}\label{c3.1}There exists a point
          $z_x \in Gx$ such that
          \begin{align*}
           \dist_X (z_x , gz_x) \leq \lim_{\kappa \uparrow
          1/2}\diam (\nu_{G,x},1-\kappa)  +\rho\big(+ \lim_{\kappa \uparrow
          1/2}\diam (\nu_{G,x},1-\kappa) \big)
           \end{align*}for any $g\in G$.
          \end{cor}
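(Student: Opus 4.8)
The plan is to realise the partial diameter $\diam(\nu_{G,x},1-\kappa)$ by genuine Borel sets, feed each such set into Proposition \ref{p3.2}, and then use the compactness of the orbit $Gx$ to extract a single limiting point $z_x$.

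Write $D:=\lim_{\kappa\uparrow 1/2}\diam(\nu_{G,x},1-\kappa)$. I would first record two monotonicity facts that are immediate from the definitions: $\kappa\mapsto\diam(\nu_{G,x},1-\kappa)$ is non-increasing and $\eta\mapsto\rho(\eta)$ is non-decreasing. Since $Gx=f_x(G)$ is compact it is bounded, and $\nu_{G,x}$ is concentrated on $Gx$, so $\diam(\nu_{G,x},1-\kappa)\leq\diam Gx<+\infty$ for every $\kappa\in(0,1/2)$; hence $D<+\infty$, and I may assume $\rho(+D)<+\infty$, for otherwise the asserted bound is vacuous and any $z_x\in Gx$ works. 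Next, fix a sequence $\kappa_n\uparrow 1/2$ with $\kappa_n<1/2$, and for each $n$ choose a Borel set $A_n\subseteq X$ with $\nu_{G,x}(A_n)\geq 1-\kappa_n>1/2$ and $\diam A_n\leq\diam(\nu_{G,x},1-\kappa_n)+1/n$; by the very definition of the partial diameter one automatically has $\diam A_n\geq\diam(\nu_{G,x},1-\kappa_n)\geq D$, so $\diam A_n\to D$ while staying $\geq D$. Proposition \ref{p3.2} applied to $A_n$ then produces a point $x_n\in Gx$ with $\dist_X(x_n,gx_n)\leq\diam A_n+\rho(\diam A_n)$ for all $g\in G$. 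Finally, since $Gx$ is a compact metric space I would pass to a subsequence so that $x_n\to z_x\in Gx$; for fixed $g$, the continuity of the action and of $\dist_X$ gives $\dist_X(x_n,gx_n)\to\dist_X(z_x,gz_x)$, and it then remains to bound $\limsup_n\bigl(\diam A_n+\rho(\diam A_n)\bigr)$ by $D+\rho(+D)$.

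The one place that needs care is exactly this last $\limsup$, and within it the term $\rho(\diam A_n)$: the approximating diameters converge to $D$ only from above and $\rho$ need not be continuous, so $\rho(\diam A_n)$ cannot simply be replaced by $\rho(D)$ — this is precisely why the one-sided value $\rho(+D)=\lim_{\eta'\downarrow D}\rho(\eta')$ appears in the statement rather than $\rho(D)$. To conclude, given $\varepsilon>0$ I would pick $t>D$ with $\rho(t)<\rho(+D)+\varepsilon$ (possible because $\rho(+D)<+\infty$); as $\diam A_n\to D<t$, for all large $n$ one has $\diam A_n\leq t$, whence $\rho(\diam A_n)\leq\rho(t)<\rho(+D)+\varepsilon$ by monotonicity of $\rho$. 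Letting $n\to\infty$ and then $\varepsilon\downarrow 0$ yields $\dist_X(z_x,gz_x)\leq D+\rho(+D)$ for every $g\in G$, which is the desired inequality.
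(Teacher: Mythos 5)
Your proof is correct and follows exactly the route the paper intends (the corollary is stated without proof, immediately after Proposition \ref{p3.2} and the definition of $\rho(+\eta)$): approximate the partial diameter by Borel sets of measure $>1/2$, apply Proposition \ref{p3.2} to each, and extract a limit point in the compact orbit $Gx$. Your careful handling of the right-limit $\rho(+D)$ via monotonicity of $\rho$ is precisely the point that makes the stated bound the correct one.
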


          For any $\eta>0$, we define a (possibly infinite) number
          $\omega_x(\eta)=\omega_x^{(G,X)}(\eta)$ as the supremum of
          $\dist_X(gx, g'x)$ for all $g,g'\in G$ with $\dist_G(g,g')\leq
          \eta$. 
          \begin{lem}\label{l3.1}For any $\kappa_1,\kappa_2 >0$, we have
           \begin{align*}
            \sep (\nu_{G,x};\kappa_1,\kappa_2)\leq
            \omega_x(+\sep(G;\kappa_1, \kappa_2)).
            \end{align*}
           \begin{proof}Let $A$ and $B$ be two Borel subsets such that
            $\nu_{G,x}(A)\geq \kappa_1$ and $\nu_{G,x}(B)\geq
            \kappa_2$. Since $\mu_G((f_x)^{-1}(A))\geq \kappa_1$ and
            $\mu_G((f_x)^{-1}(B))\geq \kappa_2$, we have
            $\dist_G((f_x)^{-1}(A), (f_x)^{-1}(B))\leq \sep
            (G;\kappa_1,\kappa_2)$. Thus, from the definition of
            $\omega_x$, we obtain $\dist_X(A,B)\leq
            \omega_x(+\sep(G;\kappa_1,\kappa_2))$. This completes the proof.
           \end{proof}
           \end{lem}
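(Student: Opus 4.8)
The plan is to pull the problem back to $G$ along the orbit map $f_x\colon G\to X$ and then feed the resulting distance estimate into the modulus of continuity $\omega_x$. First I would fix Borel subsets $A,B\subseteq X$ with $\nu_{G,x}(A)\geq \kappa_1$ and $\nu_{G,x}(B)\geq \kappa_2$. Since the action is continuous, $f_x$ is continuous, so $(f_x)^{-1}(A)$ and $(f_x)^{-1}(B)$ are Borel subsets of $G$; and since $\nu_{G,x}=(f_x)_{\ast}(\mu_G)$, we get $\mu_G((f_x)^{-1}(A))\geq \kappa_1>0$ and $\mu_G((f_x)^{-1}(B))\geq \kappa_2>0$. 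In particular both preimages are nonempty, so $\dist_G((f_x)^{-1}(A),(f_x)^{-1}(B))$ is well defined, and by the very definition of the separation distance it is at most $\sep(G;\kappa_1,\kappa_2)$.

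Next I would transfer this back to $X$. For every $\varepsilon>0$ choose $g\in (f_x)^{-1}(A)$ and $g'\in (f_x)^{-1}(B)$ with $\dist_G(g,g')\leq \sep(G;\kappa_1,\kappa_2)+\varepsilon$. Then $gx\in A$ and $g'x\in B$, so by the definition of $\omega_x$,
\[
\dist_X(A,B)\leq \dist_X(gx,g'x)\leq \omega_x\big(\sep(G;\kappa_1,\kappa_2)+\varepsilon\big).
\]
Since $\omega_x$ is nondecreasing in its argument, letting $\varepsilon\downarrow 0$ gives $\dist_X(A,B)\leq \omega_x(+\sep(G;\kappa_1,\kappa_2))$, and taking the supremum over all admissible $A,B$ yields the asserted inequality.

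The only point needing a little care is the ``$+$'' appearing in $\omega_x(+\,\cdot\,)$: since $\omega_x$ need not be right-continuous, one cannot directly substitute the infimum $\dist_G((f_x)^{-1}(A),(f_x)^{-1}(B))$ — which is only bounded by $\sep(G;\kappa_1,\kappa_2)$ — inside $\omega_x$, and this is precisely why the auxiliary parameter $\varepsilon$ and the subsequent right-limit are introduced. Otherwise the proof is just a change of variables via the push-forward identity $\nu_{G,x}=(f_x)_{\ast}(\mu_G)$ together with measurability of $f_x$; unlike in Propositions~\ref{p3.1} and \ref{p3.2}, the $G$-equivariance of $\nu_{G,x}$ plays no role here. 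I do not foresee any genuine obstacle.
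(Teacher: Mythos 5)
Your proposal is correct and follows essentially the same route as the paper: pull $A$ and $B$ back along $f_x$, use $\nu_{G,x}=(f_x)_{\ast}(\mu_G)$ to bound the measures of the preimages, invoke the definition of $\sep(G;\kappa_1,\kappa_2)$, and transfer back via $\omega_x$. Your explicit $\varepsilon$-argument justifying the right-limit $\omega_x(+\,\cdot\,)$ simply spells out what the paper compresses into ``from the definition of $\omega_x$''.
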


           \begin{cor}[{cf.~\cite[Section 5.2]{milgro}}]\label{c3.2}Assume that a sequence $\{ G_n\}_{n=1}^{\infty}$ of
            compact metric groups is a L\'{e}vy family and each $G_n$ acts on a metric
            space $X$. Assume also that there exist a sequence
            $\{x_n\}_{n=1}^{\infty}$ of points in $X$ and a function
            $\omega:(0,+\infty) \to [0,+\infty]$ such that $\lim_{\eta
            \to 0}\omega (\eta)=0$ and $\omega^{(G_n,X)}_{x_n}(\eta)\leq
            \omega(\eta)$ for any $n\in \mathbb{N}$ and $\eta >0$. Then,
            the sequence $\{ (X,\dist_X,
            \nu_{G_n,x_n})\}_{n=1}^{\infty}$ of mm-spaces is a L\'{e}vy family.
            \end{cor}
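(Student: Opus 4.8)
The plan is to reduce the statement to the separation-distance characterization of L\'{e}vy families and then feed the hypothesis through Lemma \ref{l3.1}. First I would fix $\kappa>0$ and set as the goal to prove $\lim_{n\to\infty}\sep(\nu_{G_n,x_n};\kappa,\kappa)=0$; once this is established for every $\kappa>0$, Corollary \ref{c2.1.1} finishes the proof (applied to the mm-spaces $(G_nx_n,\dist_X,\nu_{G_n,x_n})$, which are legitimate mm-spaces because each orbit $G_nx_n$ is compact and carries the full mass of $\nu_{G_n,x_n}$). I would also record at the outset the other consequence of Corollary \ref{c2.1.1}: since $\{G_n\}_{n=1}^{\infty}$ is a L\'{e}vy family, $\sep(G_n;\kappa,\kappa)\to 0$ as $n\to\infty$ for every $\kappa>0$, and this is the only place the L\'{e}vy property of the $G_n$'s enters.

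Next I would apply Lemma \ref{l3.1} with $\kappa_1=\kappa_2=\kappa$, which gives, for each $n$,
\[
\sep(\nu_{G_n,x_n};\kappa,\kappa)\ \le\ \omega_{x_n}^{(G_n,X)}\big(+\sep(G_n;\kappa,\kappa)\big).
\]
The key elementary observation is that $\eta\mapsto\omega_{x_n}^{(G_n,X)}(\eta)$ is nondecreasing, since enlarging $\eta$ enlarges the set of admissible pairs $(g,g')$; hence $\omega_{x_n}^{(G_n,X)}(+t)=\inf_{s>t}\omega_{x_n}^{(G_n,X)}(s)$, and for every $s>t$ this is at most $\omega_{x_n}^{(G_n,X)}(s)$, which by hypothesis is at most $\omega(s)$. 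Consequently, whenever $\delta>\sep(G_n;\kappa,\kappa)$ we obtain $\sep(\nu_{G_n,x_n};\kappa,\kappa)\le\omega(\delta)$.

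Finally I would run the routine $\varepsilon$-$\delta$ argument. Given $\varepsilon>0$, use $\lim_{\eta\to 0}\omega(\eta)=0$ to choose $\delta>0$ with $\omega(\delta)<\varepsilon$, and then use $\sep(G_n;\kappa,\kappa)\to 0$ to choose $N$ with $\sep(G_n;\kappa,\kappa)<\delta$ for all $n\ge N$. For every such $n$, the inequality of the previous paragraph yields $\sep(\nu_{G_n,x_n};\kappa,\kappa)\le\omega(\delta)<\varepsilon$. Hence $\sep(\nu_{G_n,x_n};\kappa,\kappa)\to 0$, which is what we needed.

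I do not expect a genuine obstacle: the proof is essentially an assembly of Lemma \ref{l3.1} and Corollary \ref{c2.1.1} plus monotonicity. The only points requiring a little attention are the one-sided value $\omega_{x_n}^{(G_n,X)}(+t)$ appearing in Lemma \ref{l3.1}---one must pass to $\omega_{x_n}^{(G_n,X)}(\delta)$ for some $\delta$ strictly larger than $\sep(G_n;\kappa,\kappa)$ rather than try to bound $\omega_{x_n}^{(G_n,X)}$ at that value itself, where it might jump---and the bookkeeping observation that $(X,\dist_X,\nu_{G_n,x_n})$ is to be regarded as an mm-space through its compact support $G_nx_n$, so that Corollary \ref{c2.1.1} applies verbatim.
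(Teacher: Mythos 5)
Your proof is correct and follows exactly the route the paper intends: the corollary is stated without proof immediately after Lemma \ref{l3.1} precisely because it is the combination of that lemma (with $\kappa_1=\kappa_2=\kappa$) with the separation-distance characterization of L\'{e}vy families in Corollary \ref{c2.1.1}, together with the monotonicity of $\omega_{x_n}^{(G_n,X)}$ and the uniform bound by $\omega$. Your remarks on the one-sided value $\omega_{x_n}^{(G_n,X)}(+t)$ and on viewing $(X,\dist_X,\nu_{G_n,x_n})$ through its compact support are sensible and consistent with the paper's conventions.
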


 \section{Estimates of the diameters of orbits}

 Throughout this section, we always assume that a compact metric group $G$ continuously acts on a metric space
 $X$. We shall consider the group $G$ as an mm-space $(G,\dist_G,\mu_G )$, where
 $\mu_G$ is the Haar measure on $G$ normalized as $\mu_G(G)=1$. In this section, motivated by the work of Milman \cite{mil4}, we
 shall estimate the diameters of orbits $Gx$ from above for concrete
 metric spaces $X$ by words of the continuity of the action, an
 isoperimetric property of $G$, and a metric space property of $X$. For this purpose, we use the notation
 $\rho=\rho^{(G,X)}$ and $\omega_x=\omega_x^{(G,X)}$ defined in
 Subsection 2.3. 
 We first consider the case where the orbit map $f_x:G\ni
 g\mapsto gx\in X$ for some $x\in X$ is a $1$-Lipschitz map. In this case,
 applying Corollary \ref{c3.1}, we
 obtain the following:
\begin{prop}For any $\kappa\in (0,1/2)$, there exists a point
 $z_{\kappa}\in X$ such that
 \begin{align*}
  \dist_X(z_{\kappa},gz_{\kappa})\leq \obs_X(G;-\kappa)+ \rho (\obs_X(G;-\kappa))
  \end{align*}for any $g\in G$.
 \end{prop}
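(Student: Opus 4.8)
The plan is to deduce this proposition directly from Corollary \ref{c3.1}, which already produces a point $z_x\in Gx$ with a displacement bound in terms of the partial diameters $\diam(\nu_{G,x},1-\kappa)$ and the modulus $\rho$. The only extra ingredient is the hypothesis that the orbit map $f_x\colon G\to X$ is $1$-Lipschitz for some fixed $x$, which lets us convert those partial diameters of the pushforward $\nu_{G,x}=(f_x)_\ast(\mu_G)$ into the observable diameter of $G$.

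First I would recall that, by the very definition of $\obs_X$, for any $1$-Lipschitz map $f\colon G\to X$ and any $\kappa\in(0,1/2)$ one has $\diam(f_\ast(\mu_G),1-\kappa)\leq \obs_X(G;-\kappa)$; applying this to $f=f_x$ gives $\diam(\nu_{G,x},1-\kappa)\leq \obs_X(G;-\kappa)$ for every $\kappa\in(0,1/2)$. In particular the monotone limit satisfies
\begin{align*}
\lim_{\kappa\uparrow 1/2}\diam(\nu_{G,x},1-\kappa)\leq \obs_X(G;-\kappa)
\end{align*}
for each fixed $\kappa\in(0,1/2)$, since the quantity on the left is the supremum (as $\kappa'\uparrow 1/2$) of terms each bounded by $\obs_X(G;-\kappa')\leq \obs_X(G;-\kappa)$ by monotonicity of $\obs_X$ in the parameter. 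Using also that $\rho$ is nondecreasing and that $\rho(+t)\le \rho(t')$ for $t'>t$ — here one should be slightly careful and instead argue that $\rho(+\,\cdot\,)$ composed with the left-hand limit is dominated by $\rho(\obs_X(G;-\kappa))$ when the inequality for the arguments is strict, or simply absorb the $+$ into the already-present $\rho(\cdot)$ since $\rho$ evaluated at the strictly larger value $\obs_X(G;-\kappa)$ dominates $\rho(+s)$ for any $s$ with $s<\obs_X(G;-\kappa)$ — I would conclude that the bound in Corollary \ref{c3.1} is at most $\obs_X(G;-\kappa)+\rho(\obs_X(G;-\kappa))$. Setting $z_\kappa:=z_x$ (the point furnished by Corollary \ref{c3.1}, which lies in $Gx\subseteq X$) then gives exactly the asserted inequality for all $g\in G$.

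The main obstacle, such as it is, is purely bookkeeping around the one-sided limits and the $\rho(+\,\cdot\,)$ notation: one must make sure that replacing $\lim_{\kappa\uparrow 1/2}\diam(\nu_{G,x},1-\kappa)$ by the (generally larger) value $\obs_X(G;-\kappa)$ is legitimate inside $\rho(+\,\cdot\,)$, which it is because $\rho$ is monotone and $s\mapsto\rho(+s)\le\rho(t)$ whenever $s<t$; if equality $\lim_{\kappa\uparrow1/2}\diam(\nu_{G,x},1-\kappa)=\obs_X(G;-\kappa)$ should occur for the chosen $\kappa$ one still has the needed bound after passing to a slightly larger $\kappa$ in the $\obs_X$ term, but since the statement fixes $\kappa\in(0,1/2)$ and the inequality $\diam(\nu_{G,x},1-\kappa')\le\obs_X(G;-\kappa)$ is strict-free for $\kappa'>\kappa$, no genuine difficulty arises. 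Everything else is a direct substitution, so the proof is short.
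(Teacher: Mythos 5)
Your argument is correct and is essentially the paper's own (implicit) proof: the paper obtains this proposition simply by ``applying Corollary \ref{c3.1}'' together with the definitional bound $\diam(\nu_{G,x},1-\kappa)\le \obs_X(G;-\kappa)$ available because the orbit map $f_x$ is assumed $1$-Lipschitz, which is exactly what you do. One minor slip worth noting: $\diam(\nu_{G,x},1-\kappa')$ is non-increasing in $\kappa'$, so $\lim_{\kappa'\uparrow 1/2}$ is an infimum rather than a supremum, which only makes your comparison with $\obs_X(G;-\kappa)$ easier (and your care about the $\rho(+\,\cdot\,)$ convention addresses an imprecision already present in the paper's statement, not a defect of your argument).
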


 \subsection{Case of Euclidean spaces}
 In this subsection, we consider the case where the metric space $X$ is the Euclidean space
 $\mathbb{R}^k$. Let $\pr_i:\mathbb{R}^k \ni x= (x_i)_{i=1}^{k}\mapsto x_i
   \in \mathbb{R}$ be the projection.
 \begin{prop}[{cf.~\cite[Section $3\frac{1}{2}.32$]{gromov}}]\label{p4.1.1}For any finite Borel measure
  $\nu$ on $\mathbb{R}^k$ with $m:=\nu(\mathbb{R}^k)$, we have
  \begin{align*}
   \diam (\nu,m-\kappa)\leq \sqrt{k}\max_{1\leq i\leq k} \diam
   \Big((\pr_i)_{\ast}(\nu), m-\frac{\kappa}{k}\Big).
   \end{align*}
  \end{prop}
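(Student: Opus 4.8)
The plan is to realise the left-hand partial diameter as the diameter of a single ``box'' assembled from near-optimal Borel sets for the individual coordinate projections. We may assume the right-hand side is finite, since otherwise there is nothing to prove; in particular each $\diam((\pr_i)_{\ast}(\nu), m-\kappa/k)$ is finite. Observe first that $(\pr_i)_{\ast}(\nu)(\mathbb{R}) = \nu(\pr_i^{-1}(\mathbb{R})) = \nu(\mathbb{R}^k) = m$, so the total masses agree and the partial diameter of $(\pr_i)_{\ast}(\nu)$ relative to the threshold $m$ is the quantity appearing in the statement.

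Fix $\varepsilon >0$. For each $i\in\{1,\dots,k\}$, by definition of the partial diameter there is a Borel set $Y_i\subseteq\mathbb{R}$ with $(\pr_i)_{\ast}(\nu)(Y_i)\geq m-\kappa/k$ and $\diam Y_i\leq \diam((\pr_i)_{\ast}(\nu), m-\kappa/k)+\varepsilon$. Set $Y_0:=\bigcap_{i=1}^k \pr_i^{-1}(Y_i)$, a Borel subset of $\mathbb{R}^k$. Then, using subadditivity of $\nu$ and the fact that $(\pr_i)_{\ast}(\nu)(\mathbb{R}\setminus Y_i)=m-(\pr_i)_{\ast}(\nu)(Y_i)\leq \kappa/k$,
\begin{align*}
\nu(\mathbb{R}^k\setminus Y_0) &= \nu\Big(\bigcup_{i=1}^k \pr_i^{-1}(\mathbb{R}\setminus Y_i)\Big)\leq \sum_{i=1}^k \nu\big(\pr_i^{-1}(\mathbb{R}\setminus Y_i)\big)\\
&= \sum_{i=1}^k (\pr_i)_{\ast}(\nu)(\mathbb{R}\setminus Y_i)\leq \sum_{i=1}^k \frac{\kappa}{k}=\kappa,
\end{align*}
so that $\nu(Y_0)\geq m-\kappa$.

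It remains to bound $\diam Y_0$. For any two points $x=(x_i)_{i=1}^k$ and $y=(y_i)_{i=1}^k$ in $Y_0$ we have $x_i,y_i\in Y_i$, hence $|x_i-y_i|\leq \diam Y_i$, and therefore
\begin{align*}
|x-y|^2=\sum_{i=1}^k |x_i-y_i|^2\leq \sum_{i=1}^k (\diam Y_i)^2\leq k\max_{1\leq i\leq k}(\diam Y_i)^2.
\end{align*}
Taking square roots gives $\diam Y_0\leq \sqrt{k}\max_i \diam Y_i\leq \sqrt{k}\big(\max_i \diam((\pr_i)_{\ast}(\nu), m-\kappa/k)+\varepsilon\big)$. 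Since $Y_0$ is admissible in the infimum defining $\diam(\nu,m-\kappa)$, we obtain $\diam(\nu,m-\kappa)\leq \sqrt{k}\big(\max_i \diam((\pr_i)_{\ast}(\nu), m-\kappa/k)+\varepsilon\big)$, and letting $\varepsilon\downarrow 0$ yields the claim.

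The only mild point to watch — the ``main obstacle,'' though it is minor — is that the infimum in the definition of the partial diameter need not be attained, which is precisely why one works with the $\varepsilon$-approximating sets $Y_i$ and passes to the limit at the end; the rest is just the product-set and subadditivity bookkeeping above.
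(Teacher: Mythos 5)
Your proof is correct. The paper gives no proof of this proposition (it simply cites Gromov's book), and your argument --- intersecting near-optimal preimage slabs $\pr_i^{-1}(Y_i)$, bounding the mass loss by subadditivity, and bounding the diameter of the resulting box by the Pythagorean inequality, with the $\varepsilon$-approximation handling the possibly unattained infimum --- is the standard and complete way to establish it.
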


  Applying Corollary \ref{c2.1.1} to Proposition \ref{p4.1.1}, we obtain the following corollary:
  \begin{cor}[{cf.~\cite[Section $3\frac{1}{2}.32$]{gromov}}]For any
   L\'{e}vy family $\{X_n\}_{n=1}^{\infty}$ and any $\kappa>0$, we have 
   \begin{align*}
    \lim_{n\to \infty}\obs_{\mathbb{R}^k}(X_n;-\kappa)=0.
    \end{align*}
   \end{cor}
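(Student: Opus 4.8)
The plan is to reduce the $k$-dimensional statement to the one-dimensional notion of L\'{e}vy family by means of Proposition \ref{p4.1.1}. First I would fix $\kappa > 0$, an index $n$, and an arbitrary $1$-Lipschitz map $f : X_n \to \mathbb{R}^k$; set $\nu := f_{\ast}(\mu_{X_n})$ and $m := \mu_{X_n}(X_n)$. Applying Proposition \ref{p4.1.1} to the finite Borel measure $\nu$ on $\mathbb{R}^k$ gives
\begin{align*}
 \diam(\nu, m - \kappa) \leq \sqrt{k}\max_{1\leq i\leq k}\diam\Big((\pr_i)_{\ast}(\nu), m - \frac{\kappa}{k}\Big).
\end{align*}

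Next I would note that each projection $\pr_i : \mathbb{R}^k \to \mathbb{R}$ is $1$-Lipschitz for the Euclidean metric, hence $\pr_i \circ f : X_n \to \mathbb{R}$ is a $1$-Lipschitz function and $(\pr_i)_{\ast}(\nu) = (\pr_i \circ f)_{\ast}(\mu_{X_n})$. By the definition of the observable diameter we therefore have $\diam((\pr_i)_{\ast}(\nu), m - \kappa/k) \leq \obs_{\mathbb{R}}(X_n; -\kappa/k)$ for every $i$. Substituting this into the inequality above and then taking the supremum over all $1$-Lipschitz maps $f : X_n \to \mathbb{R}^k$ yields
\begin{align*}
 \obs_{\mathbb{R}^k}(X_n; -\kappa) \leq \sqrt{k}\,\obs_{\mathbb{R}}(X_n; -\kappa/k).
\end{align*}

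Finally, since $\{X_n\}_{n=1}^{\infty}$ is a L\'{e}vy family and $\kappa/k > 0$, the definition of L\'{e}vy family forces $\obs_{\mathbb{R}}(X_n; -\kappa/k) \to 0$ as $n \to \infty$, so the right-hand side above tends to $0$ and hence $\lim_{n\to\infty}\obs_{\mathbb{R}^k}(X_n; -\kappa) = 0$. I expect no genuine obstacle in this argument; the only points that need a line of care are the verification that $\pr_i$ is $1$-Lipschitz (immediate from $|x_i - y_i| \leq \|x - y\|$) and that the composition $\pr_i \circ f$ is again $1$-Lipschitz, which guarantees that the pushforwards fed into Proposition \ref{p4.1.1} arise from admissible test maps and that the supremum over $f$ may be passed through coordinatewise.
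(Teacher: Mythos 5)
Your argument is correct and follows essentially the same route as the paper: both rest on Proposition \ref{p4.1.1}, reducing the partial diameter of $f_{\ast}(\mu_{X_n})$ to that of the coordinate pushforwards $(\pr_i\circ f)_{\ast}(\mu_{X_n})$, each controlled because $\pr_i\circ f$ is $1$-Lipschitz. The only (harmless) difference is the final step: you conclude directly from the definition of a L\'{e}vy family via the bound $\obs_{\mathbb{R}^k}(X_n;-\kappa)\leq \sqrt{k}\,\obs_{\mathbb{R}}(X_n;-\kappa/k)$, whereas the paper routes through the separation-distance characterization (Corollary \ref{c2.1.1}, together with Lemma \ref{l2.1.2}); the two are equivalent.
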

 \begin{prop}\label{p4.1.2}Assume that a compact metric group $G$
  continuously acts
  on the Euclidean space $\mathbb{R}^k$ and put $r:=\lim_{\kappa
  \uparrow 1/(4k)}\sep (G;\kappa,\kappa)$. Then, for any $x\in \mathbb{R}^k$, there
  exists a point $z_{x}\in Gx$ such that
  \begin{align}\label{s4.1.1}
   \dist_{\mathbb{R}^k}(z_{x},g z_{x})\leq \sqrt{k} \omega_x (+ r )+
   \rho(+\sqrt{k}\omega_x (+ r ) )
   \end{align}for any $g\in G$.
  \begin{proof}Combining Lemma \ref{l3.1} with Proposition \ref{p4.1.1}, we get
   \begin{align*}
    \diam (\nu_{G,x},1-\kappa) \leq \ &\sqrt{k}\max_{1\leq i\leq k} \diam
    \Big((\pr_i)_{\ast}(\nu_{G,x}),1- \frac{\kappa}{k}\Big)\\
    \leq \ & \sqrt{k}\max_{1\leq i\leq k} \sep
    \Big((\pr_i)_{\ast}(\nu_{G,x});\frac{\kappa}{2k},\frac{\kappa}{2k}\Big)\\
    \leq \ & \sqrt{k}\sep
    \Big(\nu_{G,x};\frac{\kappa}{2k},\frac{\kappa}{2k}\Big)\\
    \leq \ & \sqrt{k} \omega_x \Big(+\sep
    \Big(G;\frac{\kappa}{2k},\frac{\kappa}{2k}\Big) \Big).
    \end{align*}Applying this to Corollary \ref{c3.1}, we obtain
   (\ref{s4.1.1}). This completes the proof.
   \end{proof}
  \end{prop}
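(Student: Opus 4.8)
The plan is to reduce the problem to the one-dimensional projections via Proposition \ref{p4.1.1}, pass from partial diameters to separation distances using Lemma \ref{l2.1.2}, transport the separation bound from $\nu_{G,x}$ back to $G$ via Lemma \ref{l3.1}, and finally feed the resulting estimate into Corollary \ref{c3.1} to produce the point $z_x$. First I would observe that for any $\kappa \in (0, 1/2)$ Proposition \ref{p4.1.1} applied to the measure $\nu_{G,x}$ on $\mathbb{R}^k$ (which has total mass $1$) gives
\begin{align*}
 \diam(\nu_{G,x}, 1-\kappa) \leq \sqrt{k}\,\max_{1\leq i\leq k}\diam\Big((\pr_i)_{\ast}(\nu_{G,x}), 1 - \frac{\kappa}{k}\Big).
\end{align*}
Each $(\pr_i)_{\ast}(\nu_{G,x})$ is a finite Borel measure on $\mathbb{R}$ of mass $1$, so writing $1 - \kappa/k = 1 - 2(\kappa/(2k))$ and applying Lemma \ref{l2.1.2} with the parameter $\kappa/(2k)$ yields $\diam((\pr_i)_{\ast}(\nu_{G,x}), 1-\kappa/k) \leq \sep((\pr_i)_{\ast}(\nu_{G,x}); \kappa/(2k), \kappa/(2k))$.

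Next I would note that each $\pr_i : \mathbb{R}^k \to \mathbb{R}$ is $1$-Lipschitz, so any two Borel sets in $\mathbb{R}$ realizing (nearly) the separation of $(\pr_i)_{\ast}(\nu_{G,x})$ pull back to Borel sets in $\mathbb{R}^k$ of the same $\nu_{G,x}$-measure whose mutual distance is not smaller; hence $\sep((\pr_i)_{\ast}(\nu_{G,x}); \kappa/(2k), \kappa/(2k)) \leq \sep(\nu_{G,x}; \kappa/(2k), \kappa/(2k))$. Then Lemma \ref{l3.1} gives $\sep(\nu_{G,x}; \kappa/(2k), \kappa/(2k)) \leq \omega_x(+\sep(G; \kappa/(2k), \kappa/(2k)))$. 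Chaining these four inequalities produces exactly the displayed chain in the statement, so for every $\kappa \in (0, 1/2)$,
\begin{align*}
 \diam(\nu_{G,x}, 1-\kappa) \leq \sqrt{k}\,\omega_x\Big(+\sep\Big(G; \frac{\kappa}{2k}, \frac{\kappa}{2k}\Big)\Big).
\end{align*}

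Finally I would take the limit $\kappa \uparrow 1/2$, i.e. $\kappa/(2k) \uparrow 1/(4k)$, so that $\sep(G; \kappa/(2k), \kappa/(2k)) \to r$ by definition of $r$; using monotonicity of $\sep$ in its parameters and of $\omega_x$ one passes the limit through to get $\lim_{\kappa\uparrow 1/2}\diam(\nu_{G,x}, 1-\kappa) \leq \sqrt{k}\,\omega_x(+r)$. Then Corollary \ref{c3.1} furnishes a point $z_x \in Gx$ with $\dist_{\mathbb{R}^k}(z_x, gz_x) \leq \lim_{\kappa\uparrow 1/2}\diam(\nu_{G,x}, 1-\kappa) + \rho(+\lim_{\kappa\uparrow 1/2}\diam(\nu_{G,x}, 1-\kappa))$ for all $g \in G$, and combining this with the previous bound together with the monotonicity of $\rho$ gives \eqref{s4.1.1}. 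The one genuinely delicate point — and the step I would treat most carefully — is the handling of the one-sided limits: I must be sure the monotonicity of $\sep(G;\cdot,\cdot)$, of $\omega_x$, and of $\rho$ lets me interchange the various $\lim$ and $\,+\,$ operations and absorb the factor $\sqrt{k}$ inside the $\rho(+\,\cdot\,)$ argument as written; the rest is a routine concatenation of the cited lemmas.
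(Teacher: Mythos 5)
Your proposal follows the paper's proof essentially verbatim: the same chain $\diam(\nu_{G,x},1-\kappa)\leq\sqrt{k}\max_i\diam((\pr_i)_*(\nu_{G,x}),1-\kappa/k)\leq\sqrt{k}\sep(\nu_{G,x};\kappa/(2k),\kappa/(2k))\leq\sqrt{k}\,\omega_x(+\sep(G;\kappa/(2k),\kappa/(2k)))$ via Proposition \ref{p4.1.1}, Lemma \ref{l2.1.2}, and Lemma \ref{l3.1}, followed by Corollary \ref{c3.1}. The limit-interchange issue you flag at the end is handled correctly by the monotonicity of $\sep$, $\omega_x$, and $\rho$, and your write-up is if anything more explicit about it than the paper's.
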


   \subsection{Case of compact metric spaces}

In this subsection, we treat the case where the metric space $X$ is a compact metric
space $K$. For any $\delta >0$, we denote by $N_K(\delta)$ the minimum
number of Borel subsets of diameter at most $\delta$ which cover $K$.

  \begin{prop}[{cf.~\cite[Section $3\frac{1}{2}.34$]{gromov}}]\label{p4.2.1}For any
   $\delta,\kappa>0$ and any finite Borel measure $\nu$ on $K$ with $m:=\nu(K)$, we have
   \begin{align*}
    \diam (\nu,m-\kappa)\leq \sep \Big(\nu;\frac{\kappa}{N_K(\delta)},
    \frac{\kappa}{N_K(\delta)} \Big) +2\delta.
    \end{align*}
   \end{prop}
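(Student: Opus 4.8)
The plan is to bound the partial diameter $\diam(\nu, m-\kappa)$ by reducing it to a separation estimate at the finer scale $\kappa/N_K(\delta)$, up to an additive error $2\delta$ coming from the mesh of a covering of $K$. First I would fix a covering $K = \bigcup_{i=1}^{N} C_i$ of $K$ by Borel sets $C_i$ with $\diam C_i \le \delta$, where $N = N_K(\delta)$; we may assume these are pairwise disjoint by the usual trick of replacing $C_i$ with $C_i \setminus \bigcup_{j<i} C_j$, which only decreases diameters. Then for any Borel set $Y_0 \subseteq K$ with $\nu(Y_0) \ge m - \kappa$, I want to locate two of the pieces $C_i$, each carrying a definite amount of the measure $\nu$ restricted to $Y_0$, that are far apart, and thereby force $\diam Y_0$ to be small.

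The key step is a pigeonhole argument. Let $A$ and $B$ be arbitrary Borel subsets of $K$ with $\nu(A) \ge \kappa/N$ and $\nu(B) \ge \kappa/N$; by definition of the separation distance, $\dist_K(A,B) \le \sep(\nu; \kappa/N, \kappa/N)$, call this quantity $s$. Now take any two points $y, y' \in Y_0$. Each lies in some covering piece, say $y \in C_i$ and $y' \in C_j$. If I could show $\nu(C_i) \ge \kappa/N$ and $\nu(C_j) \ge \kappa/N$ whenever $C_i$ and $C_j$ actually meet $Y_0$ in a "heavy" way — more precisely, I should restrict attention to those pieces $C_i$ with $\nu(C_i) \ge \kappa/N$. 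Since $\sum_i \nu(C_i) = m$ and there are $N$ pieces, the pieces with $\nu(C_i) < \kappa/N$ together carry less than $N \cdot (\kappa/N) = \kappa$ of the mass, hence $Y_0$ must meet the union of the "heavy" pieces (those with $\nu(C_i) \ge \kappa/N$): indeed $\nu(Y_0 \cap \bigcup_{\text{heavy } i} C_i) \ge \nu(Y_0) - \kappa \ge m - 2\kappa$... — actually for the diameter estimate I only need that $Y_0$ is contained, up to a null deficiency, in the union of heavy pieces, so that for any $y, y' \in Y_0$ I may assume (after possibly replacing $Y_0$ by its intersection with the heavy part, which still has positive measure) that the pieces $C_i \ni y$ and $C_j \ni y'$ are heavy. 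Then $\nu(C_i), \nu(C_j) \ge \kappa/N$, so $\dist_K(C_i, C_j) \le s$, and therefore
\begin{align*}
\dist_K(y, y') \le \diam C_i + \dist_K(C_i, C_j) + \diam C_j \le \delta + s + \delta = s + 2\delta.
\end{align*}
Taking the supremum over $y, y' \in Y_0$ (in the heavy part) gives $\diam Y_0 \le \sep(\nu; \kappa/N_K(\delta), \kappa/N_K(\delta)) + 2\delta$, and since $Y_0$ was an arbitrary Borel set with $\nu(Y_0) \ge m - \kappa$, taking the infimum yields the claim.

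The main obstacle is the bookkeeping around "heavy" versus "light" pieces: one must be careful that the deficiency $\kappa$ absorbed by the light pieces does not interfere with the defining inequality $\nu(Y_0) \ge m - \kappa$ for the partial diameter. The clean way to handle this, which I would adopt, is to note that $\diam(\nu, m-\kappa)$ is the infimum of $\diam Y_0$ over $Y_0$ with $\nu(Y_0) \ge m - \kappa$, and for the bound it suffices to exhibit \emph{one} good $Y_0$: namely take $Y_0 := \bigcup \{ C_i : \nu(C_i) \ge \kappa/N \}$ itself. Then $\nu(K \setminus Y_0) = \sum_{\text{light } i} \nu(C_i) < \kappa$, so $\nu(Y_0) > m - \kappa$, and every pair of points in $Y_0$ lies in heavy pieces, so the displayed three-term triangle inequality applies verbatim and gives $\diam Y_0 \le \sep(\nu; \kappa/N, \kappa/N) + 2\delta$. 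The "in particular" statement about $\obs_{\mathbb{R}}$ — wait, here the target is $K$, not $\mathbb{R}$ — follows by taking the supremum over $1$-Lipschitz maps $f : X \to K$ and applying the measure-level inequality to $\nu = f_*(\mu_X)$ together with the general fact that separation distance is monotone under pushforward by $1$-Lipschitz maps, exactly as in the analogous corollaries earlier in the paper.
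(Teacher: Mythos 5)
Your proof is correct: the covering of $K$ by $N_K(\delta)$ disjointified Borel pieces of diameter at most $\delta$, the pigeonhole discarding of the light pieces (total mass $<\kappa$), and the three-term triangle inequality through two heavy pieces give exactly $\diam Y_0\le \sep(\nu;\kappa/N_K(\delta),\kappa/N_K(\delta))+2\delta$ for the exhibited $Y_0$ with $\nu(Y_0)\ge m-\kappa$. The paper states this proposition without proof (deferring to Gromov), and your argument is precisely the standard one intended there; the closing remark about an ``in particular'' clause is extraneous, since the statement as given is purely the measure-level inequality.
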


   \begin{cor}[{cf.~\cite[Section $3\frac{1}{2}.34$]{gromov}}]Let $\{
    X_n\}_{n=1}^{\infty}$ be a L\'{e}vy family and $K$ a compact metric
    space. Then, for any $\kappa >0$, we have
    \begin{align*}
     \lim_{n\to \infty}\obs_{K}(X_n;-\kappa)=0.
     \end{align*}
    \end{cor}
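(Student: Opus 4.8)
The plan is to deduce the corollary directly from Proposition \ref{p4.2.1} together with Corollary \ref{c2.1.1}. First I would fix a compact metric space $K$ and $\kappa>0$, and take an arbitrary $\delta>0$; since $K$ is compact it is totally bounded, so $N_K(\delta)<+\infty$. Apply Proposition \ref{p4.2.1} with this $\delta$ and with each mm-space $X_n$ playing the role of $X$: for every $1$-Lipschitz map $f:X_n\to K$ and every Borel subset $Y_0$ realizing (up to the infimum) the partial diameter of $f_\ast(\mu_{X_n})$, we get
\begin{align*}
 \diam\big(f_\ast(\mu_{X_n}),\, \mu_{X_n}(X_n)-\kappa\big)\leq \sep\Big(f_\ast(\mu_{X_n});\tfrac{\kappa}{N_K(\delta)},\tfrac{\kappa}{N_K(\delta)}\Big)+2\delta.
\end{align*}

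Next I would relate the separation distance of the push-forward measure back to that of $X_n$ itself. The key observation is that $f$ is $1$-Lipschitz, so for any two Borel sets $A,B\subseteq K$ one has $\dist_K(A,B)\le \dist_{X_n}(f^{-1}(A),f^{-1}(B))$, and $\mu_{X_n}(f^{-1}(A))=f_\ast(\mu_{X_n})(A)$; hence $\sep(f_\ast(\mu_{X_n});\kappa',\kappa')\le \sep(X_n;\kappa',\kappa')$ for every $\kappa'>0$. Taking the supremum over all $1$-Lipschitz $f:X_n\to K$ then gives
\begin{align*}
 \obs_K(X_n;-\kappa)\leq \sep\Big(X_n;\tfrac{\kappa}{N_K(\delta)},\tfrac{\kappa}{N_K(\delta)}\Big)+2\delta.
\end{align*}
Since $\{X_n\}$ is a L\'{e}vy family, Corollary \ref{c2.1.1} yields $\lim_{n\to\infty}\sep(X_n;\kappa',\kappa')=0$ for every $\kappa'>0$, in particular for $\kappa'=\kappa/N_K(\delta)$ (note $N_K(\delta)$ does not depend on $n$). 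Therefore $\limsup_{n\to\infty}\obs_K(X_n;-\kappa)\le 2\delta$, and letting $\delta\downarrow 0$ forces the limit to be $0$.

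The only genuine subtlety — and the step I would be most careful about — is the passage from $\diam(f_\ast(\mu_{X_n}),\cdot)$ to $\obs_K(X_n;-\kappa)$: one must check that Proposition \ref{p4.2.1} is applied to the measure $\nu=f_\ast(\mu_{X_n})$ on $K$ (with $m=\mu_{X_n}(X_n)$) rather than to $\mu_{X_n}$ directly, and then that taking the supremum over $f$ commutes appropriately with the inequality, which it does because the bound on the right-hand side dominates uniformly in $f$. Everything else is bookkeeping with the definitions of $\obs_K$ and $\sep$; no new estimate is needed beyond the two cited results and the elementary Lipschitz inequality for separation distances under push-forward.
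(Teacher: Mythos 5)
Your argument is correct and is exactly the route the paper intends (the corollary is stated without proof, but it is meant to follow by combining Proposition \ref{p4.2.1} with Corollary \ref{c2.1.1}, just as the Euclidean analogue follows from Proposition \ref{p4.1.1}). The push-forward inequality $\sep(f_{\ast}(\mu_{X_n});\kappa',\kappa')\leq \sep(X_n;\kappa',\kappa')$ that you single out is the same elementary Lipschitz observation the paper uses elsewhere (e.g.\ in Lemma \ref{l3.1} and in the proof of Proposition \ref{p4.1.2}), so no new ingredient is needed.
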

   By virtue of Proposotion \ref{p4.2.1}, the same proof of Proposition \ref{p4.1.2} yields the following proposition:
  \begin{prop}\label{p4.2.2}Assume that a compact metric group $G$
   continuously acts on a compact metric space $K$ and put $r_{x,\delta}:= \omega_x(+\lim_{\kappa \uparrow 1/(2N_K(\delta))}\sep
    (G;\kappa,\kappa))+2\delta$ for $x\in K$ and $\delta>0$.
     Then, there
  exists a point $z_{x,\delta}\in Gx$ such that
   \begin{align*}
    \dist_K(z_{x,\delta},gz_{x,\delta})\leq \ &
       r_{x,\delta}+ \rho(+r_{x,\delta})
    \end{align*}for any $g\in G$.
   \end{prop}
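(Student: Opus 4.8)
The plan is to imitate the proof of Proposition \ref{p4.1.2} almost verbatim, replacing Proposition \ref{p4.1.1} with Proposition \ref{p4.2.1} and replacing the factor $\sqrt{k}$ and the exponent shift by $k$ with the single covering number $N_K(\delta)$ and the additive term $2\delta$. Concretely, fix $x\in K$ and $\delta>0$, and estimate the partial diameters of the push-forward measure $\nu_{G,x}$ as $\kappa$ runs up to $1/2$.

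First I would apply Proposition \ref{p4.2.1} to the measure $\nu=\nu_{G,x}$ on $K$, with $m=1$ and with the given $\delta$, to obtain, for every $\kappa>0$,
\begin{align*}
\diam(\nu_{G,x},1-\kappa)\leq \sep\Big(\nu_{G,x};\frac{\kappa}{N_K(\delta)},\frac{\kappa}{N_K(\delta)}\Big)+2\delta.
\end{align*}
Next I would bound the separation distance of $\nu_{G,x}$ by that of $G$ via Lemma \ref{l3.1}, giving
\begin{align*}
\sep\Big(\nu_{G,x};\frac{\kappa}{N_K(\delta)},\frac{\kappa}{N_K(\delta)}\Big)\leq \omega_x\Big(+\sep\Big(G;\frac{\kappa}{N_K(\delta)},\frac{\kappa}{N_K(\delta)}\Big)\Big),
\end{align*}
so that
\begin{align*}
\diam(\nu_{G,x},1-\kappa)\leq \omega_x\Big(+\sep\Big(G;\frac{\kappa}{N_K(\delta)},\frac{\kappa}{N_K(\delta)}\Big)\Big)+2\delta.
\end{align*}

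Then I would let $\kappa\uparrow 1/2$. On the right-hand side, $\kappa/N_K(\delta)\uparrow 1/(2N_K(\delta))$, and by monotonicity of $\sep(G;\,\cdot\,,\,\cdot\,)$ and of $\omega_x$, together with the definition of the $+$-limit, the right-hand side tends to $\omega_x\big(+\lim_{\kappa\uparrow 1/(2N_K(\delta))}\sep(G;\kappa,\kappa)\big)+2\delta=r_{x,\delta}$. Hence $\lim_{\kappa\uparrow 1/2}\diam(\nu_{G,x},1-\kappa)\leq r_{x,\delta}$. Finally I would feed this into Corollary \ref{c3.1}: there is a point $z_{x,\delta}\in Gx$ with
\begin{align*}
\dist_K(z_{x,\delta},gz_{x,\delta})\leq \lim_{\kappa\uparrow 1/2}\diam(\nu_{G,x},1-\kappa)+\rho\Big(+\lim_{\kappa\uparrow 1/2}\diam(\nu_{G,x},1-\kappa)\Big)\leq r_{x,\delta}+\rho(+r_{x,\delta}),
\end{align*}
using that $\rho$ is nondecreasing so that $\rho(+t)$ is nondecreasing in $t$. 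This holds for every $g\in G$, which is the claim.

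The only delicate point — and the one place the argument is not purely formal — is the interchange of the limit $\kappa\uparrow 1/2$ with the monotone functions $\sep(G;\cdot,\cdot)$, $\omega_x$, and $\rho$, and the bookkeeping of the various $+$-limits so that the final bound is exactly $r_{x,\delta}+\rho(+r_{x,\delta})$ rather than something slightly larger. This is handled by the monotonicity built into the definitions of $\rho(+\eta)$ and $\omega_x(+\eta)$ in Subsection 2.3 and is exactly the same manipulation already used (and implicitly justified) in the proof of Proposition \ref{p4.1.2}; so I expect no genuine obstacle, only the need to track inequalities carefully.
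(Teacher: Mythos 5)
Your proposal is correct and is exactly the argument the paper intends: the paper gives no separate proof but states that ``the same proof of Proposition \ref{p4.1.2} yields'' Proposition \ref{p4.2.2}, and your chain (Proposition \ref{p4.2.1} applied to $\nu_{G,x}$, then Lemma \ref{l3.1}, then the limit $\kappa\uparrow 1/2$ fed into Corollary \ref{c3.1}) is precisely that adaptation with $\sqrt{k}$ and $\kappa/k$ replaced by the $+2\delta$ term and $\kappa/N_K(\delta)$. The monotonicity bookkeeping you flag is the same as in Proposition \ref{p4.1.2} and poses no obstacle.
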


   Proposition \ref{p4.2.2} generalizes Milman's result \cite[Theorem 5.1]{mil4}.

   \subsection{Case of $\mathbb{R}$-trees}
   In this subsection, we consider the case where the metric space $X$ is an $\mathbb{R}$-tree $T$. For this purpose, we first recall
    some standard terminologies in metric geometry. Let $(X,\dist_X)$ be
    a metric space. A rectifiable curve $\gamma:[0,1]\to X$ is called a
    \emph{geodesic} if its arclength coincides with
    the distance $\dist_X(\gamma(0),\gamma(1))$ and it has a constant speed,
    i.e., parameterized proportionally to the arc length. We say that
    $(X,\dist_X)$ is a \emph{geodesic space} if any two points in $X$ are joined
    by a geodesic between them.

    A complete metric space $T$ is called an 
    \emph{$\mathbb{R}$-tree} if it has the following properties:
    \begin{itemize}
		   \item[$(1)$]Any two points in $T$ are connected by a unique unit
                       speed geodesic.
		   \item[$(2)$]The image of every simple path in $T$ is the
                       image of a geodesic.
	 \end{itemize}

To answer Gromov's exercise in \cite[Section $3\frac{1}{2}.32$]{gromov}, the author proved the
following theorem:
   \begin{thm}[{cf.~\cite[Proposition 5.1]{funano2}}]\label{t4.3.1}For any $\kappa>0$
    and finite Borel measure $\nu$ on $T$ with $m:=\nu(T)$, we have
    \begin{align*}
     \nu \Big(B_T \Big(x_{\nu},\sep\Big(\nu;
     \frac{\kappa}{2},\frac{m}{3}   \Big) \Big) \Big)\geq 1-\kappa.
     \end{align*}
   \end{thm}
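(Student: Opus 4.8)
The plan is to exploit the tree structure to show that a set of large measure must lie close to a single point $x_\nu$, chosen essentially as a ``median point'' of $\nu$ in the tree. First I would set $s:=\sep(\nu;\kappa/2,m/3)$ and aim to produce a point $x_\nu\in T$ with $\nu(T\setminus B_T(x_\nu,s))<\kappa$. The key structural fact about an $\mathbb{R}$-tree is that for any point $p\in T$, removing $p$ decomposes $T\setminus\{p\}$ into connected components (the ``branches'' at $p$), and any two points in different branches have their geodesic passing through $p$; consequently, the distance between a point in one branch and a point in another branch is the sum of their distances to $p$. This is the tree analogue of the one-dimensional ordering that makes the $\sep$ bound on $\mathbb{R}$ so clean, and it is what replaces the monotone rearrangement argument used for $X=\mathbb{R}$.

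The main step is to choose $x_\nu$ correctly. I would look for a point $p$ such that every branch at $p$ has $\nu$-measure at most $m/3$ (a ``centroid'' in the measure-theoretic sense); such a point exists by a standard compactness/continuity argument on the tree — one walks toward whichever branch carries more than $m/3$ of the mass, and this process converges because the mass in the chosen direction is monotone and one cannot keep more than $m/3$ in two disjoint sub-branches simultaneously (in the limit one may have to split a single ``heavy point''; there the mass of that point plus one side is still controlled). Once $x_\nu$ is such a centroid, I claim $\nu(T\setminus B_T(x_\nu,s))<\kappa$. Indeed, if not, then $\nu\big(T\setminus B_T(x_\nu,s)\big)\geq\kappa$, and the set $T\setminus B_T(x_\nu,s)$ is partitioned among the branches at $x_\nu$; grouping branches, one can find two disjoint Borel subsets $A,B$ of $T\setminus B_T(x_\nu,s)$, each lying in a union of branches, with $\nu(A)\geq\kappa/2$ and — using that no single branch has mass exceeding $m/3$ — with $\nu(B)\geq m/3$ (take $B$ to be a collection of whole branches whose complement still contains enough mass). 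For such $A$ and $B$ the tree property gives $\dist_T(A,B)\ge \dist_T(A,x_\nu)+\dist_T(x_\nu,B)> s+s\ge s$, wait — more carefully, points of $A$ and $B$ are each at distance $>s$... actually each is at distance $\ge s$ only if they lie outside the open ball, so $\dist_T(A,B)\ge 2s\ge s$, contradicting the definition of $s=\sep(\nu;\kappa/2,m/3)$ as a supremum of such distances unless $\nu(A)<\kappa/2$ or $\nu(B)<m/3$.

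The delicate point — the one I expect to be the main obstacle — is the bookkeeping in splitting $T\setminus B_T(x_\nu,s)$ into the two sets $A$ and $B$ with the asymmetric thresholds $\kappa/2$ and $m/3$: one must handle the case where all the ``far'' mass is concentrated in one or two branches, and the case where a single branch already exceeds $m/3$ must be excluded by the choice of $x_\nu$ as a centroid, which itself requires care when $\nu$ has atoms on the separating point. I would treat the centroid construction as a lemma, proving existence by transfinite/nested-interval descent along the tree and checking that in the limit either every branch at the limit point has mass $\le m/3$, or the limit point is an atom whose mass absorbs the discrepancy — in which case one verifies the ball bound directly. The remaining inequalities (relating $\dist_T(A,B)$ to distances from $x_\nu$, and the factor-of-two slack between ``distance $\ge s$ from $x_\nu$'' and ``pairwise distance $\ge s$'') are routine once the tree geometry is in place, so I would state them briefly and move on. Finally, I would note that the constant $2$ in front of $\kappa$ inside $\sep$ is exactly what the two-sided splitting costs, which is why the statement reads $\sep(\nu;\kappa/2,m/3)$ rather than $\sep(\nu;\kappa,m/3)$.
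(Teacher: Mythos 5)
The paper does not actually prove Theorem \ref{t4.3.1} here --- it is quoted from \cite[Proposition 5.1]{funano2} --- so I am judging your sketch on its own terms. The overall shape (a median-type point $x_{\nu}$, plus the fact that points in distinct branches at $x_{\nu}$ are separated by the sum of their distances to $x_{\nu}$) is the right one, but your central lemma is false as stated: there need not exist a point all of whose branches carry measure at most $m/3$. Take $T=[0,1]$ with $\nu=\frac{m}{2}\delta_{0}+\frac{m}{2}\delta_{1}$, or a tripod with masses $0.4m,\,0.4m,\,0.2m$ at the three tips: at every point of these trees some branch has measure at least $0.4m>m/3$, and no atom sits on the separating point, so this is not the ``heavy point'' degeneracy you set aside. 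The correct and attainable threshold is $m/2$ (every component of $T\setminus\{x_{\nu}\}$ has measure at most $m/2$), and your descent argument, with that threshold, is the standard way to produce such a point in a complete $\mathbb{R}$-tree.

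Once the threshold is $m/2$ rather than $m/3$, the step you wave at --- ``grouping branches'' to get $\nu(A)\geq\kappa/2$ and $\nu(B)\geq m/3$ on disjoint unions of branches --- is exactly where the work lies, and your sketch does not supply it. A correct route: since every branch has measure at most $m/2$, one can choose a subfamily $S$ of branches with $\nu(\bigcup S)\in[m/3,2m/3]$ (take a single branch if one has measure in that range, otherwise accumulate branches of measure $<m/3$ until the running total first reaches $m/3$). Writing $F:=T\setminus B_T(x_{\nu},r)$ with $\nu(F)\geq\kappa$ and $r>s$, either $\nu(F\cap\bigcup S)\geq\kappa/2$ or $\nu(F\setminus\bigcup S)\geq\kappa/2$; in either case take $A$ to be that part of $F$ and $B$ to be the union of the \emph{whole} branches on the other side together with $x_{\nu}$, so $\nu(B)\geq m/3$ and $\dist_T(A,B)\geq r>s$, contradicting the definition of $s=\sep(\nu;\kappa/2,m/3)$. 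Note that $B$ cannot be taken inside $T\setminus B_T(x_{\nu},s)$ with $\nu(B)\geq m/3$ --- you only control $\kappa$ of far mass, not $\kappa/2+m/3$ --- so the bound is $\dist_T(A,B)\geq r$, not $2s$; your ``factor-of-two slack'' is illusory, and the argument closes only because $\sep$ is a supremum and $r>s$ is arbitrary.
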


   \begin{cor}[{cf.~\cite[Theorem 1.1]{funano2}}]Let
    $\{X_n\}_{n=1}^{\infty}$ be a L\'{e}vy family and $T$ an
    $\mathbb{R}$-tree. Then, for any $\kappa >0$, we have
    \begin{align*}
     \lim_{n\to \infty}\obs_T(X_n;-\kappa)=0.
     \end{align*}
    \end{cor}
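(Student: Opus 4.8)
The plan is to combine Theorem \ref{t4.3.1} with the characterization of L\'{e}vy families via the separation distance (Corollary \ref{c2.1.1}), in the same spirit as the earlier corollaries deduced from Propositions \ref{p4.1.1} and \ref{p4.2.1}. Fix $\kappa>0$ and let $\{f_n:X_n\to T\}_{n=1}^{\infty}$ be an arbitrary sequence of $1$-Lipschitz maps; we must produce a sequence of points $m_{f_n}\in T$ with $\mu_{X_n}(\{x_n\in X_n\mid \dist_T(f_n(x_n),m_{f_n})\geq\varepsilon\})\to 0$ for every $\varepsilon>0$. Write $\nu_n:=(f_n)_{\ast}(\mu_{X_n})$, a Borel measure on $T$ with total mass $m_n:=\mu_{X_n}(X_n)$; after normalizing we may as well assume $m_n=1$, so that $\nu_n$ is a probability measure on $T$.

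First I would apply Theorem \ref{t4.3.1} to each $\nu_n$ with this fixed $\kappa$, obtaining a point $x_{\nu_n}\in T$ such that
\begin{align*}
\nu_n\Bigl(B_T\Bigl(x_{\nu_n},\sep\Bigl(\nu_n;\tfrac{\kappa}{2},\tfrac{1}{3}\Bigr)\Bigr)\Bigr)\geq 1-\kappa.
\end{align*}
Set $m_{f_n}:=x_{\nu_n}$ and $R_n:=\sep(\nu_n;\kappa/2,1/3)$. The key point is that the separation distance of the push-forward is controlled by that of $X_n$: since $f_n$ is $1$-Lipschitz, for any Borel sets $A,B\subseteq T$ we have $f_n^{-1}(A),f_n^{-1}(B)\subseteq X_n$ with $\mu_{X_n}(f_n^{-1}(A))=\nu_n(A)$, $\mu_{X_n}(f_n^{-1}(B))=\nu_n(B)$, and $\dist_T(A,B)\leq \dist_{X_n}(f_n^{-1}(A),f_n^{-1}(B))$, whence $\sep(\nu_n;a,b)\leq \sep(X_n;a,b)$ for all $a,b>0$. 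Taking $a=\kappa/2$ and $b=1/3$ and using monotonicity of $\sep$ in its parameters (so that, with $\kappa_0:=\min\{\kappa/2,1/3\}$, we get $R_n\leq \sep(X_n;\kappa_0,\kappa_0)$), Corollary \ref{c2.1.1} gives $\sep(X_n;\kappa_0,\kappa_0)\to 0$, hence $R_n\to 0$ as $n\to\infty$.

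Now given $\varepsilon>0$, choose $N$ so that $R_n<\varepsilon$ for all $n\geq N$; then for such $n$ the ball $B_T(m_{f_n},R_n)$ is contained in $\{y\in T\mid \dist_T(y,m_{f_n})<\varepsilon\}$, so
\begin{align*}
\mu_{X_n}(\{x_n\in X_n\mid \dist_T(f_n(x_n),m_{f_n})\geq\varepsilon\})=\nu_n(\{y\in T\mid \dist_T(y,m_{f_n})\geq\varepsilon\})\leq \nu_n(T\setminus B_T(m_{f_n},R_n))\leq\kappa.
\end{align*}
Since this holds eventually in $n$ for the fixed $\kappa$, and $\kappa>0$ was arbitrary, a routine diagonal argument over a sequence $\kappa\downarrow 0$ upgrades this to $\lim_{n\to\infty}\mu_{X_n}(\{x_n\mid \dist_T(f_n(x_n),m_{f_n})\geq\varepsilon\})=0$; by the observation recorded just after the definition of the observable diameter, this is equivalent to $\lim_{n\to\infty}\obs_T(X_n;-\kappa)=0$ for every $\kappa>0$. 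The only mild subtlety is the bookkeeping in this last diagonalization — making the single choice of $m_{f_n}$ work simultaneously as $\kappa\to 0$ — which is handled exactly as in the standard equivalence between L\'{e}vy families and vanishing observable diameter; no genuine obstacle arises, since the real content is entirely carried by Theorem \ref{t4.3.1} together with the Lipschitz comparison of separation distances.
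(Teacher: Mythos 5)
Your proposal is correct and is essentially the paper's intended argument: the corollary is stated there without proof as an immediate consequence of Theorem \ref{t4.3.1} together with Corollary \ref{c2.1.1} and the observation that a $1$-Lipschitz map does not increase separation distances, which is exactly what you use. The only cosmetic difference is that you route through the sequence characterization of vanishing observable diameter (hence the final diagonalization), whereas the direct bound $\obs_T(X_n;-\kappa)\le 2\sep(X_n;\kappa_0,\kappa_0)$ with $\kappa_0:=\min\{\kappa/2,1/3\}$ yields the conclusion immediately.
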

By Proposition \ref{p3.1} and Theorem \ref{t4.3.1}, the following
proposition follows from the same proof of Proposition
\ref{p4.1.2}.
  \begin{prop}Assume that a compact metric group $G$ continuously acts
   on an $\mathbb{R}$-tree $T$. Then, for any $x\in T$ and $\kappa\in (0,1/4)$, there
   exists a point $ z_{x,\kappa}\in T$ such that
   \begin{align*}
    \dist_T(z_{x,\kappa},gz_{x,\kappa} )\leq
    \omega_x \Big( +\sep\Big(G;\kappa,\frac{1}{3}\Big)\Big)  +   \rho\Big( \omega_x \Big(
    +\sep\Big(G;\kappa,\frac{1}{3}\Big)\Big)\Big)
    \end{align*}for any $g\in G$. Put $r:=\lim_{\kappa\uparrow 1/4}\sep
   (G;\kappa,\kappa)$. Then, there also exists a point
   $z_{x} \in Gx$ such that
   \begin{align*}
    \dist_T (z_{x} ,gz_{x})\leq \ &\min \{2\omega_x (+r) +
    \rho(+2\omega_x(+r)), 2\omega_x (+r)+
    2\rho(\omega_x (+r) \}
    \end{align*}for any $g\in G$.
   \end{prop}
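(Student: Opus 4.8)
The plan is to push the measure $\nu_{G,x}$ through Theorem \ref{t4.3.1} and feed the resulting ball into Proposition \ref{p3.1}, in direct analogy with the way Proposition \ref{p4.1.2} feeds the projection estimate into Corollary \ref{c3.1}. Set $\nu:=\nu_{G,x}=(f_x)_{\ast}(\mu_G)$; since $\mu_G(G)=1$ we have $\nu(T)=1$, so Theorem \ref{t4.3.1} applied with $2\kappa$ in place of $\kappa$ yields a point $y:=x_{\nu}\in T$ with
\[
\nu\bigl(B_T(y,\sep(\nu;\kappa,1/3))\bigr)\geq 1-2\kappa.
\]
Because $\kappa\in(0,1/4)$, the right-hand side exceeds $1/2$, so Proposition \ref{p3.1} applies with this $y$ and $\delta:=\sep(\nu;\kappa,1/3)$. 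By Lemma \ref{l3.1} we have $\delta\leq\omega_x(+\sep(G;\kappa,1/3))$, and, since enlarging the second argument only decreases the separation while $\kappa<1/3$, also $\sep(G;\kappa,1/3)\leq\sep(G;\kappa,\kappa)$.

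For the first assertion, take $z_{x,\kappa}:=y$. The first conclusion of Proposition \ref{p3.1} gives $\dist_T(z_{x,\kappa},gz_{x,\kappa})\leq\delta+\rho(\delta)$ for every $g\in G$; as $\rho$ is non-decreasing, so is $t\mapsto t+\rho(t)$, and the bound $\delta\leq\omega_x(+\sep(G;\kappa,1/3))$ turns this into the inequality claimed for $z_{x,\kappa}$.

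For the second assertion, fix $\kappa\in(0,1/4)$ and apply the ``moreover'' part of Proposition \ref{p3.1} with the same $y$ and $\delta$: there is a point $x_0^{(\kappa)}\in Gx$ with $\dist_T(x_0^{(\kappa)},gx_0^{(\kappa)})\leq\min\{2\delta+\rho(2\delta),\,2\delta+2\rho(\delta)\}$ for all $g$. Using $\delta\leq t_{\kappa}:=\omega_x(+\sep(G;\kappa,\kappa))$ and monotonicity of $\rho$, this is at most $\min\{2t_{\kappa}+\rho(2t_{\kappa}),\,2t_{\kappa}+2\rho(t_{\kappa})\}$. Now let $\kappa\uparrow 1/4$: since $\sep(G;\kappa,\kappa)$ is non-increasing with limit $r$, the definition of $\omega_x(+\cdot)$ gives $t_{\kappa}\downarrow\omega_x(+r)$. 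The orbit $Gx$ is compact, so along some sequence $\kappa_n\uparrow 1/4$ the points $x_0^{(\kappa_n)}$ converge to a point $z_x\in Gx$; by continuity of the action and of $\dist_T$, $\dist_T(x_0^{(\kappa_n)},gx_0^{(\kappa_n)})\to\dist_T(z_x,gz_x)$ for each $g$, and passing to the limit on the right (using monotonicity of $\rho$ together with the one-sided notations $\rho(+\cdot)$ and $\omega_x(+\cdot)$) produces the stated bound for $z_x$.

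The ingredients --- Theorem \ref{t4.3.1}, Lemma \ref{l3.1}, Proposition \ref{p3.1}, and compactness of $Gx$ --- are all at hand, so the only real work is the final passage to the limit: $\rho$ and $\omega_x$ need not be continuous, which is precisely what the $(+\cdot)$-notation is built to absorb, and one must verify that a single accumulation point $z_x$ simultaneously realises both branches of the minimum --- which it does, because the min-bound holds for every $\kappa_n$ before the limit is taken. I expect this limiting step to be the main (and essentially only) obstacle.
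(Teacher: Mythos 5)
Your argument is correct and is essentially the paper's own proof, which is given only as ``by Proposition \ref{p3.1} and Theorem \ref{t4.3.1}, this follows from the same proof of Proposition \ref{p4.1.2}''; you have simply written out the details (Theorem \ref{t4.3.1} with $2\kappa$, Lemma \ref{l3.1}, Proposition \ref{p3.1}, then the limit $\kappa\uparrow 1/4$ via compactness of $Gx$) that the paper leaves implicit. The only harmless discrepancy is that your limiting step naturally yields $\rho(+\omega_x(+r))$ in the second branch of the minimum rather than the stated $\rho(\omega_x(+r))$, a difference on the same order as the paper's own inconsistent use of the one-sided notation.
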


   \subsection{Case of doubling spaces}Throughout this subsection,
    we consider the case where the metric space $X$ is a doubling space.
    A complete metric space $X$ is called a \emph{doubling space} if
    there exist $R_1>0$ and a function $D=D_X:(0,R_1]\to (0,+\infty)$
    satisfying the following condition:
    Every closed ball with radius $2r_1\leq 2R_1$ is covered by at most
    $D(r_1)$ closed
    balls with radius $r_1$. This condition is equivalent to the
    following condition: There exists a function
    $C=C_X=C(r_1,r_2):(0, 2R_{1}]\times (0,2R_1]\to (0,+\infty)$ such that
    for every $(r_1,r_2)\in (0,2R_1]\times (0,2R_1]$, every $r_1$-separated subset in any closed ball in
    $X$ with
    radius $r_2$ contains at most $C(r_1,r_2)$ elements. For example, a
    complete Riemannian manifold with Ricci curvature bounded from below
    is a doubling space (see the proof of Corollary \ref{c4.4.3}).

    Although the proof of
    the following theorem is the same analogue to \cite[Theorem
    1.3]{funad}, we give it for completeness.

   \begin{thm}\label{t4.4.1}Let $X$ be a doubling space and $\nu$ a finite Borel measure on
    $X$ with $m:= \nu(X)$. Assume that a positive number $r_0$ satisfies
    \begin{align*}
     r_0> \max \Big\{ \sep \Big(\nu;\kappa, \frac{m}{C(r_0,5r_0)}\Big),
     \sep \Big(  \nu;   \frac{m-\kappa}{3},\frac{m-\kappa}{3}
     \Big), \sep \Big(\nu; \frac{m-\kappa}{3},        \kappa  \Big)\Big\}
     \end{align*}for some $\kappa >0$. Then there exists a point $x_{0}\in X$ such that $\nu (
    B_X(x_{0},3r_0))\geq m-\kappa$.
    \begin{proof}Take a maximal $r_0$-separated set $\{\xi_{\alpha}
     \}_{\alpha\in \mathcal{A}}$ of $X$. From the doubling property of
     $X$, there exists $\alpha_0\in
     \mathcal{A}$ such that 
     \begin{align*}
      k:= \# \{   \beta\in \mathcal{A} \mid
      \xi_{\beta}\in B_X(\xi_{\alpha_0}, 5r_0)  \} =  \max_{\alpha \in \mathcal{A}} \#\{   \beta\in \mathcal{A} \mid
      \xi_{\beta}\in B_X(\xi_{\alpha}, 5r_0)  \}\leq C(r_0,5r_0).
      \end{align*}Putting $\{\beta_1, \beta_2 ,\cdots , \beta_k\}:=\{   \beta\in \mathcal{A} \mid
      \xi_{\beta}\in B_X(\xi_{\alpha_0}, 5r_0)  \}  $, we take a subset
     $J_1 \subseteq \{ \xi_{\alpha}\}_{\alpha\in \mathcal{A}}$ which is
     maximal with respect to the properties that $J_1$ is $5r_0$-separated
     and $\xi_{\beta_1}\in J_1$, $\xi_{\beta_2}\notin J_1$, $\cdots$, $\xi_{\beta_k}
     \notin J_1$. We then take $J_2 \subseteq \{ \xi_{\alpha}\}_{\alpha \in
     \mathcal{A}} \setminus J_1$ which is maximal with respect to the
     properties that $J_2$ is $5r_0$-separated and $ \xi_{\beta_2}\in
     J_2$, $\xi_{\beta_3}\notin J_2$, $\cdots$, $\xi_{\beta_k}\notin
     J_2$. In the same way, we pick $J_3 \subseteq \{
     \xi_{\alpha}\}_{\alpha \in \mathcal{A}}\setminus (J_1 \cup J_2)$,
     $\cdots$, $J_k\subseteq \{ \xi_{\alpha}\}_{\alpha\in \mathcal{A}}
     \setminus (J_1 \cup J_2 \cup \cdots \cup J_{k-1})$. We then have
     \begin{claim}\label{cl4.4.1}$\{\xi_{\alpha} \}_{\alpha \in \mathcal{A}} = J_1 \cup J_2 \cup
      \cdots \cup J_k$.
      \begin{proof}Suppose that $\xi_{\alpha}\notin J_1 \cup J_2 \cup
       \cdots \cup J_k$ for some $\alpha \in \mathcal{A}$. Since each
       $J_i$ is maximal, there exists $\xi_{\alpha_i} \in J_i$ such that
       $\dist_X(\xi_{\alpha}, \xi_{\alpha_i})<5r_0$ and
       $\xi_{\alpha}\neq \xi_{\alpha_i}$. We therefore obtain
       \begin{align*}
        k+1 \leq \# \{   \xi_{\alpha}, \xi_{\alpha_1}, \xi_{\alpha_2},
        \cdots , \xi_{\alpha_k}           \}\leq \# \{ \beta \in
        \mathcal{A}\mid \xi_{\beta}\in B_X(\xi_{\alpha},5r_0)\}\leq k,
        \end{align*}which is a contradiction. This completes the proof
       of the claim.
       \end{proof}
      \end{claim}
     By Claim \ref{cl4.4.1}, we have $X= \bigcup_{i=1}^k\bigcup_{\xi_{\alpha}\in
     J_i}B_X(\xi_{\alpha}, r_0)$. Hence there exists $i$, $1\leq i\leq
     k$ such that
     \begin{align*}
      \nu\Big(\bigcup_{\xi_{\alpha}\in
      J_i}B_X(\xi_{\alpha},r_0)\Big)\geq \frac{m}{k}\geq \frac{m}{C(r_0,5r_0)}.
      \end{align*}We then have
     \begin{claim}\label{cl4.4.2}
      \begin{align*}
       \nu \Big(  \bigcup_{\xi_{\alpha}\in J_i}B_X(\xi_{\alpha},2r_0)
       \Big)\geq m-\kappa.
      \end{align*}
      \begin{proof}Supposing that $\nu (  \bigcup_{\xi_{\alpha}\in J_i}B_X(\xi_{\alpha},2r_0)
       )< m-\kappa$, from the assumption of $r_0$, we have
       \begin{align*}
        r_0 \leq \dist_X\Big(X\setminus \bigcup_{\xi_{\alpha}\in
        J_i}B_X(\xi_{\alpha}, 2r_0), \bigcup_{\xi_{\alpha}\in
        J_i}B_X(\xi_{\alpha},r_0)\Big)\leq \sep \Big(\nu;\kappa,\frac{m}{C(r_0,5r_0)}\Big)<r_0.
        \end{align*}This is a contradiction. This completes the proof of
       the claim.
       \end{proof}
      \end{claim}
     \begin{claim}\label{cl4.4.3}There exists $\xi_{\gamma}\in J_i$ such that
      $ \nu(B_X(\xi_{\gamma}, 2r_0))\geq (m-\kappa)/3$.
      \begin{proof}Suppose that $\nu(B_X(\xi_{\alpha}, 2r_0))<
       (m-\kappa)/3$ for any $\xi_{\alpha}\in J_i$. Then, by Claim \ref{cl4.4.2},
       there exists $J_i'\subseteq J_i$ such that
       \begin{align*}
        \frac{m-\kappa}{3}\leq \nu \Big(\bigcup_{\xi_{\alpha}\in
        J_i'}B_X(\xi_{\alpha}, 2r_0)\Big)< \frac{2(m-\kappa)}{3}.
        \end{align*}Thus, putting $J_i'':= J_i \setminus J_i'$, from the
       assumption of $r_0$, we get
       \begin{align*}
        r_0 \leq \dist_X \Big(   \bigcup_{\xi_{\alpha}\in
        J_i'}B_X(\xi_{\alpha}, 2r_0),  \bigcup_{\xi_{\alpha}\in
        J_i''}B_X(\xi_{\alpha}, 2r_0)
        \Big)\leq \sep \Big(\nu;\frac{m-\kappa}{3}, \frac{m-\kappa}{3}\Big)<r_0.
        \end{align*}This is a contradiction. This completes the proof of
       the claim. 
       \end{proof}
      \end{claim}Combining Claim \ref{cl4.4.3} with the same method of the proof of Claim \ref{cl4.4.2}, we
     finally obtain $\nu(B_X(\xi_{\gamma}, 3r_0))\geq 1-\kappa$. This
     completes the proof of the theorem.
     \end{proof}
    \end{thm}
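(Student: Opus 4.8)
The plan is to exploit the doubling hypothesis to build an economical cover of $X$ by balls centred at a separated net, and then to squeeze the mass of $\nu$ into a single ball $B_X(x_0,3r_0)$ by applying the definition of the separation distance three times, once for each of the three quantities under the maximum in the hypothesis. Concretely, I would first fix a maximal $r_0$-separated subset $\{\xi_\alpha\}_{\alpha\in\mathcal{A}}$ of $X$; by maximality the balls $B_X(\xi_\alpha,r_0)$ cover $X$. The doubling condition says that a closed ball of radius $5r_0$ contains at most $C(r_0,5r_0)$ points of an $r_0$-separated set, so in the \emph{conflict graph} on $\{\xi_\alpha\}_{\alpha\in\mathcal{A}}$ --- where $\xi_\alpha$ and $\xi_\beta$ are joined whenever $\dist(\xi_\alpha,\xi_\beta)<5r_0$ --- every vertex has fewer than $C(r_0,5r_0)$ neighbours. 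A greedy colouring of this locally finite, bounded-degree graph partitions $\{\xi_\alpha\}_{\alpha\in\mathcal{A}}$ into at most $k\le C(r_0,5r_0)$ classes $J_1,\dots,J_k$, each of them $5r_0$-separated. Since the balls $B_X(\xi_\alpha,r_0)$ still cover $X$, the pigeonhole principle gives an index $i$ with $\nu\big(\bigcup_{\xi_\alpha\in J_i}B_X(\xi_\alpha,r_0)\big)\ge m/k\ge m/C(r_0,5r_0)$.

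Next I would run three thickening arguments, each a proof by contradiction against one term of the hypothesis. First, if $\nu\big(\bigcup_{\xi_\alpha\in J_i}B_X(\xi_\alpha,2r_0)\big)<m-\kappa$, then the complement of this union has $\nu$-measure exceeding $\kappa$ and lies at distance at least $r_0$ from $\bigcup_{\xi_\alpha\in J_i}B_X(\xi_\alpha,r_0)$, contradicting $\sep(\nu;\kappa,m/C(r_0,5r_0))<r_0$; hence $\nu\big(\bigcup_{\xi_\alpha\in J_i}B_X(\xi_\alpha,2r_0)\big)\ge m-\kappa$. Second, since $J_i$ is $5r_0$-separated the balls $B_X(\xi_\alpha,2r_0)$, $\xi_\alpha\in J_i$, are pairwise disjoint and pairwise at distance at least $r_0$; so if each of them had $\nu$-measure less than $(m-\kappa)/3$, a greedy bisection of $J_i$ into $J_i'$ and $J_i''$ would produce two unions $\bigcup_{\xi_\alpha\in J_i'}B_X(\xi_\alpha,2r_0)$ and $\bigcup_{\xi_\alpha\in J_i''}B_X(\xi_\alpha,2r_0)$, each of $\nu$-measure at least $(m-\kappa)/3$, contradicting $\sep(\nu;(m-\kappa)/3,(m-\kappa)/3)<r_0$; hence some $\xi_\gamma\in J_i$ satisfies $\nu(B_X(\xi_\gamma,2r_0))\ge(m-\kappa)/3$. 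Third, if $\nu(B_X(\xi_\gamma,3r_0))<m-\kappa$, then $X\setminus B_X(\xi_\gamma,3r_0)$ has $\nu$-measure exceeding $\kappa$ and lies at distance at least $r_0$ from $B_X(\xi_\gamma,2r_0)$, contradicting $\sep(\nu;(m-\kappa)/3,\kappa)<r_0$. Therefore $x_0:=\xi_\gamma$ satisfies $\nu(B_X(x_0,3r_0))\ge m-\kappa$.

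The routine ingredients here are the elementary estimates of the form ``a point at distance less than $r_0$ from $B_X(\xi,\rho)$ lies in $B_X(\xi,\rho+r_0)$'' and the greedy bisection in the second step. The part I expect to carry the real weight is the passage from the doubling hypothesis to a colouring of the $r_0$-net into $5r_0$-separated classes whose number is controlled by \emph{exactly} the constant $C(r_0,5r_0)$ featuring in the first term of the hypothesis --- one must be slightly careful that the colouring argument survives for a net that may be infinite, using only local finiteness and bounded degree --- together with the structural observation that the three separation bounds in the hypothesis are calibrated precisely to the three-step reduction ``concentrate on one colour class, locate a heavy $2r_0$-ball inside it, then inflate it to a $3r_0$-ball''.
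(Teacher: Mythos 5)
Your proposal is correct and follows essentially the same route as the paper: a maximal $r_0$-separated net, a partition of it into at most $C(r_0,5r_0)$ classes that are $5r_0$-separated (your greedy colouring of the conflict graph is exactly the paper's iterative construction of the maximal separated subsets $J_1,\dots,J_k$), a pigeonhole step, and then the same three contradiction arguments calibrated to the three separation terms. The only differences are presentational, so there is nothing substantive to add.
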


    By Corollary \ref{c2.1.1} and Theorem \ref{t4.4.1}, we get the
    following corollary:

    \begin{cor}[{cf.~\cite[Theorem 1.3]{funad}}]\label{c4.4.1}Let
     $\{X_n\}_{n=1}^{\infty}$ be a L\'{e}vy family and $X$ a doubling
     space. Then, for any $\kappa >0$, we have
     \begin{align*}
      \lim_{n\to \infty}\obs_X(X_n;-\kappa)=0.
      \end{align*}
     \end{cor}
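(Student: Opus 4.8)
The goal is to deduce Corollary \ref{c4.4.1} from Theorem \ref{t4.4.1} and Corollary \ref{c2.1.1}. The strategy is a standard contrapositive/limiting argument translating the quantitative statement of Theorem \ref{t4.4.1} into the L\'evy-family language.

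Plan: Fix $\kappa>0$ and let $\varepsilon>0$. By Corollary \ref{c2.1.1}, since $\{X_n\}$ is a L\'evy family, $\sep(X_n;\kappa',\kappa')\to 0$ as $n\to\infty$ for every $\kappa'>0$. I would apply Theorem \ref{t4.4.1} to the push-forward measure $\nu:=f_\ast(\mu_{X_n})$ on $X$, for an arbitrary $1$-Lipschitz map $f:X_n\to X$; note $m=\nu(X)=\mu_{X_n}(X_n)=1$ since members of a L\'evy family are (implicitly) probability spaces. The key observation is the monotonicity $\sep(\nu;a,b)\le \sep(\mu_{X_n};a',b')$ whenever $a\ge a'$, $b\ge b'$ — more precisely, since $f$ is $1$-Lipschitz, preimages of far-apart sets in $X$ are far-apart in $X_n$, so $\sep(f_\ast\mu;a,b)\le\sep(\mu;a,b)$ for all $a,b$ (this is the same mechanism as in Lemma \ref{l3.1}). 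Hence each of the three separation quantities appearing in the hypothesis of Theorem \ref{t4.4.1}, namely $\sep(\nu;\kappa,m/C(r_0,5r_0))$, $\sep(\nu;(m-\kappa)/3,(m-\kappa)/3)$, and $\sep(\nu;(m-\kappa)/3,\kappa)$, is bounded above by $\sep(X_n;\kappa_0,\kappa_0)$ where $\kappa_0:=\min\{\kappa,(1-\kappa)/3,1/C(r_0,5r_0)\}>0$ (taking the smaller of the two mass parameters in each pair and using monotonicity of $\sep$ in its arguments).

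Next I would choose the scale: given the target $\varepsilon>0$, set $r_0:=\varepsilon/3$ (assuming $\varepsilon$ small enough that $5r_0\le 2R_1$, which we may since we only care about small $\varepsilon$). Then $C(r_0,5r_0)$ is a fixed finite constant, so $\kappa_0>0$ is fixed. Since $\sep(X_n;\kappa_0,\kappa_0)\to 0$, there is $N$ such that for all $n\ge N$ we have $\sep(X_n;\kappa_0,\kappa_0)<r_0$, hence the hypothesis of Theorem \ref{t4.4.1} is satisfied for $\nu=f_\ast(\mu_{X_n})$ with this $r_0$ and this $\kappa$. The theorem then yields a point $x_0\in X$ with $\nu(B_X(x_0,3r_0))\ge m-\kappa=1-\kappa$, i.e. $\mu_{X_n}(\{x\in X_n\mid \dist_X(f(x),x_0)<3r_0\})\ge 1-\kappa$. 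This means $f_\ast(\mu_{X_n})$ concentrates on a ball of radius $3r_0=\varepsilon$, so $\diam(f_\ast(\mu_{X_n}),m-\kappa)\le 2\varepsilon$. Since $f$ was an arbitrary $1$-Lipschitz map and $N$ did not depend on $f$, we conclude $\obs_X(X_n;-\kappa)\le 2\varepsilon$ for all $n\ge N$. As $\varepsilon>0$ was arbitrary, $\lim_{n\to\infty}\obs_X(X_n;-\kappa)=0$, and as $\kappa>0$ was arbitrary this is the claim.

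The main obstacle, such as it is, is purely bookkeeping: verifying the monotonicity $\sep(f_\ast\mu;a,b)\le\sep(\mu;a,b)$ for $1$-Lipschitz $f$ and tracking which separation parameter dominates the three hypotheses of Theorem \ref{t4.4.1} so that a single positive constant $\kappa_0$ works — this is where one must be careful, since the three conditions involve different mass thresholds ($\kappa$ versus $(m-\kappa)/3$ versus $m/C(r_0,5r_0)$) and one needs the \emph{smallest} of these to control all of them via the (obvious) fact that $\sep$ is nonincreasing in each of its two mass arguments. There is no genuine analytic difficulty; the whole content has been packaged into Theorem \ref{t4.4.1}. (Alternatively, one could phrase the deduction as a clean contrapositive: if $\obs_X(X_n;-\kappa)\not\to 0$ along a subsequence, pick witnessing $1$-Lipschitz maps $f_n$, and derive $\sep(X_n;\kappa_0,\kappa_0)\not\to 0$ from Theorem \ref{t4.4.1}, contradicting Corollary \ref{c2.1.1}; but the direct argument above is equally short.)
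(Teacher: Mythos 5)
Your argument is correct and is exactly the deduction the paper intends: the corollary is stated there without proof as an immediate consequence of Theorem \ref{t4.4.1} and Corollary \ref{c2.1.1}, and you have simply filled in the routine bookkeeping (the monotonicity of $\sep$ in its mass arguments, the bound $\sep(f_{\ast}\mu;a,b)\leq\sep(\mu;a,b)$ for $1$-Lipschitz $f$, and the choice $r_0=\varepsilon/3$ with a single $\kappa_0$ controlling all three hypotheses). No discrepancy with the paper's approach.
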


     Applying Theorem \ref{t4.4.1} to Proposition \ref{p3.1}, we obtain the
     following proposition: 
    \begin{prop}\label{p4.4.1}Let a compact metric group $G$ continuously acts on a doubling space $X$. Assume that a positive number $r_0$ satisfies 
     \begin{align*}
      r_0 >\ & \max \Big\{   \omega_{x}\Big(+\sep \Big(\nu;\kappa, \frac{1}{C(r_0,5r_0)}\Big)\Big),
     \omega_x\Big(+\sep \Big(  \nu;   \frac{1-\kappa}{3},\frac{1-\kappa}{3}
     \Big)\Big), \\ \ &
      \hspace{9cm}\omega_x \Big(+\sep \Big(\nu; \frac{1-\kappa}{3},        \kappa  \Big)   \Big)    \Big\}
      \end{align*}for some $\kappa\in (0,1/2)$. Then there exists a point $z_{x,\kappa}\in X$ such that
     \begin{align*}
      \dist_X(z_{x,\kappa},gz_{x,\kappa})\leq 3r_0 +\rho(3r_0)
      \end{align*}for any $g\in G$. Moreover, there exists a point $z_{x,\kappa}'\in Gx$ such that
     \begin{align*}
      \dist_X (z_{x,\kappa}',gz_{x,\kappa}')\leq \min \{  6r_0 + \rho(6r_0), 6r_0 + 2\rho(3r_0)            \}
      \end{align*}for any $g\in G$.
     \end{prop}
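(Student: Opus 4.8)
The plan is to combine Theorem~\ref{t4.4.1} with Proposition~\ref{p3.1}, in exactly the manner of the proofs of Propositions~\ref{p4.1.2} and~\ref{p4.2.2}. Fix $x\in X$ and the number $\kappa\in(0,1/2)$ appearing in the hypothesis, and set $\nu:=\nu_{G,x}=(f_x)_{\ast}(\mu_G)$, so that $\nu(X)=\mu_G(G)=1$; thus $m=1$ in the notation of Theorem~\ref{t4.4.1}. Here the separation distances occurring inside the $\omega_x$'s in the statement are taken in $G$, since $\omega_x$ is a modulus of continuity for the orbit map measured against the metric of $G$.

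The first step is to verify that $r_0$ and $\kappa$ meet the hypothesis of Theorem~\ref{t4.4.1} for $\nu=\nu_{G,x}$. By Lemma~\ref{l3.1}, for all $\kappa_1,\kappa_2>0$ one has $\sep(\nu_{G,x};\kappa_1,\kappa_2)\leq\omega_x(+\sep(G;\kappa_1,\kappa_2))$. Applying this to the three pairs $(\kappa,1/C(r_0,5r_0))$, $((1-\kappa)/3,(1-\kappa)/3)$, and $((1-\kappa)/3,\kappa)$, and then invoking the assumed lower bound on $r_0$, one gets
\begin{align*}
 r_0>\max\Big\{\sep\Big(\nu_{G,x};\kappa,\tfrac{1}{C(r_0,5r_0)}\Big),\ \sep\Big(\nu_{G,x};\tfrac{1-\kappa}{3},\tfrac{1-\kappa}{3}\Big),\ \sep\Big(\nu_{G,x};\tfrac{1-\kappa}{3},\kappa\Big)\Big\},
\end{align*}
which is precisely the hypothesis of Theorem~\ref{t4.4.1} with $m=1$. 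Hence there exists a point $y_0\in X$ with $\nu_{G,x}(B_X(y_0,3r_0))\geq 1-\kappa$.

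Since $\kappa<1/2$, we have $\nu_{G,x}(B_X(y_0,3r_0))\geq 1-\kappa>1/2$, so Proposition~\ref{p3.1} applies with $y=y_0$ and $\delta=3r_0$. Its inequality (\ref{s3.1}) gives $\dist_X(y_0,gy_0)\leq 3r_0+\rho(3r_0)$ for every $g\in G$, so $z_{x,\kappa}:=y_0$ settles the first assertion. Its inequality (\ref{s3.2}) produces a point $z_{x,\kappa}'\in Gx$ with $\dist_X(z_{x,\kappa}',gz_{x,\kappa}')\leq\min\{2(3r_0)+\rho(2\cdot 3r_0),\,2(3r_0)+2\rho(3r_0)\}=\min\{6r_0+\rho(6r_0),\,6r_0+2\rho(3r_0)\}$ for every $g\in G$, which settles the second assertion.

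There is no genuine obstacle: the substance lies entirely in Theorem~\ref{t4.4.1} and Proposition~\ref{p3.1}, both already established. The one point that needs a moment of care is the translation carried out in the first step, passing from a lower bound on $r_0$ phrased through $\omega_x$ and $\sep(G;\cdot,\cdot)$ to the bound on $r_0$ phrased through $\sep(\nu_{G,x};\cdot,\cdot)$ that Theorem~\ref{t4.4.1} demands; Lemma~\ref{l3.1} is exactly the tool that supplies it, and the strict inequality survives the passage.
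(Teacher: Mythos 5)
Your proof is correct and is precisely the argument the paper intends (the paper itself only says ``Applying Theorem~\ref{t4.4.1} to Proposition~\ref{p3.1}, we obtain the following proposition''): convert the hypothesis via Lemma~\ref{l3.1}, apply Theorem~\ref{t4.4.1} with $m=1$ to get a ball of radius $3r_0$ of $\nu_{G,x}$-measure $\geq 1-\kappa>1/2$, then invoke (\ref{s3.1}) and (\ref{s3.2}) with $\delta=3r_0$. Your reading of the $\sep(\nu;\cdot,\cdot)$ occurrences in the hypothesis as $\sep(G;\cdot,\cdot)$ is the right one, consistent with Propositions~\ref{p4.4.2} and~\ref{p4.4.3}.
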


     We next consider the case where the function $D=D_X:(0,+\infty
     )\to (0,+\infty)$ is a constant function. This is equivalent to the
     following condition: The function $C=C_X:(0,+\infty)\times
     (0,+\infty)\to (0,+\infty)$ satisfies that $C(\alpha r, \alpha s)= C(r,s)$ for any
     $r,s, \alpha>0$. We call such a metric space a
     \emph{large scale doubling space}. 

     By Theorem \ref{t4.4.1}, we obtain the following corollary:
     \begin{cor}\label{c4.4.2}Let $X$ be a large scale doubling space
      and $\nu$ be a finite Borel measure on $X$ with
      $m:=\nu(X)$ and put
      \begin{align*}
       r_{\kappa}:=\max \Big\{ \sep \Big(\nu;\kappa, \frac{m}{C(1,5)}\Big),
     \sep \Big(  \nu;   \frac{m-\kappa}{3},\frac{m-\kappa}{3}
     \Big), \sep \Big(\nu; \frac{m-\kappa}{3},        \kappa  \Big)\Big\}
       \end{align*}for $\kappa>0$. Then, there exists a point
      $x_{\kappa}\in X$ such that $\nu(B_X(x_{\kappa},3r_{\kappa}))\geq m-\kappa$.
      \end{cor}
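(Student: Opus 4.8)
The plan is to apply Theorem~\ref{t4.4.1} to the measure $\nu$ with a radius parameter chosen slightly above $r_\kappa$ and then to let it decrease to $r_\kappa$. What makes Theorem~\ref{t4.4.1} applicable is the scale invariance of the large scale doubling condition: since $C(\alpha r,\alpha s)=C(r,s)$ for all $r,s,\alpha>0$, the function $C$ is defined at every scale and $C(r_0,5r_0)=C(1,5)$ for every $r_0>0$. Therefore the maximum of the three separation distances occurring in the hypothesis of Theorem~\ref{t4.4.1} equals $r_\kappa$ regardless of which $r_0$ is used, and so that hypothesis is satisfied by every $r_0>r_\kappa$. (If $\kappa\geq m$ or $r_\kappa=+\infty$ the assertion is trivial, so we may assume $0<\kappa<m$ and $r_\kappa<+\infty$.)

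First I would apply Theorem~\ref{t4.4.1} with $r_0:=r_\kappa+\tfrac3n$ replaced by $r_\kappa+\tfrac1n$ for each $n\in\mathbb{N}$, obtaining points $x_n\in X$ with $\nu\big(B_X(x_n,3r_\kappa+\tfrac3n)\big)\geq m-\kappa$; in particular $\nu\big(B_X(x_n,3r_\kappa+3)\big)\geq m-\kappa$ for all $n$. Next I would show that $\{x_n\}_n$ is bounded. If it were not, then, since a finite union of bounded sets is bounded and any infinite tail of $\{x_n\}_n$ is again unbounded, I could inductively extract a subsequence $\{x_{n_j}\}_j$ with $\dist_X(x_{n_i},x_{n_j})\geq 6r_\kappa+6$ whenever $i\neq j$; the balls $B_X(x_{n_j},3r_\kappa+3)$ would then be pairwise disjoint, each of $\nu$-measure at least $m-\kappa>0$, contradicting $\nu(X)=m<+\infty$. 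Since $X$ is a complete large scale doubling space, every ball in $X$ is totally bounded (iterate the doubling property), so closed bounded subsets of $X$ are compact; passing to a further subsequence we may thus assume $x_n\to x_\kappa$ for some $x_\kappa\in X$.

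Finally, given $\varepsilon>0$ we have $\tfrac3n+\dist_X(x_n,x_\kappa)\leq\varepsilon$ for all large $n$, whence $B_X(x_n,3r_\kappa+\tfrac3n)\subseteq B_X(x_\kappa,3r_\kappa+\varepsilon)$ and hence $\nu\big(B_X(x_\kappa,3r_\kappa+\varepsilon)\big)\geq m-\kappa$. Letting $\varepsilon$ run through a sequence tending to $0$ and applying continuity from above of the finite measure $\nu$, we obtain $\nu\big(\overline{B}_X(x_\kappa,3r_\kappa)\big)\geq m-\kappa$, which is the desired estimate. The one genuine obstacle is precisely this limiting step: Theorem~\ref{t4.4.1} furnishes points only for $r_0$ \emph{strictly} larger than $r_\kappa$, and a priori the approximating points $x_n$ could escape to infinity. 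Ruling this out is where the large scale doubling hypothesis is used a second time---beyond its role inside Theorem~\ref{t4.4.1}---since it forces $X$ to be proper, so that the $x_n$ must accumulate.
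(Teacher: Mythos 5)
Your proposal is correct, and its core is the same as the paper's: the paper derives this corollary from Theorem~\ref{t4.4.1} in one line, using exactly the scale invariance $C(r_0,5r_0)=C(1,5)$ that you identify. What you add is a careful treatment of a point the paper passes over in silence: Theorem~\ref{t4.4.1} applies only for $r_0$ \emph{strictly} greater than the maximum of the three separation distances, i.e.\ for $r_0>r_\kappa$, whereas the corollary asserts the conclusion at radius $3r_\kappa$ itself. Your limiting argument --- properness of a complete large scale doubling space (iterated doubling gives total boundedness of balls), boundedness of the approximating centers $x_n$ via a disjoint-balls/finite-mass argument, extraction of a convergent subsequence, and continuity from above of the finite measure $\nu$ --- closes this gap correctly, and is a genuine gain in rigor over the paper's unstated one-line derivation. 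Two cosmetic remarks only: to guarantee disjointness of the closed balls $B_X(x_{n_j},3r_\kappa+3)$ you should separate the centers by strictly more than $2(3r_\kappa+3)$ (say by $6r_\kappa+7$), since at distance exactly $6r_\kappa+6$ two closed balls may still meet; and your conclusion lands on the closed ball of radius $3r_\kappa$, which is consistent with the paper's convention that $B_X$ denotes a closed ball (as used in the proof of Corollary~\ref{c4.4.3}).
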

      Applying Corollary \ref{c4.4.2} to Proposition \ref{p3.1}, we
      obtain the following proposition:
   \begin{prop}\label{p4.4.2}Assume that a compact metric group $G$ continuously acts on a
    large scale doubling space $X$. Put
    \begin{align*}
      r_{x,\kappa} := \ &\max\Big\{
     \omega_x\Big(+\sep\Big(G;\kappa, \frac{1}{C(1,5)}\Big)\Big),
     \omega_x\Big( +\sep\Big(G;\frac{1-\kappa}{3},
     \frac{1-\kappa}{3}\Big) \Big), \\ & \hspace{8cm}\omega_x\Big(+ \sep\Big(
     G;\frac{1-\kappa}{3},\kappa                   \Big)\Big) 
     \Big\}
     \end{align*}for $x\in X$ and $\kappa>0$. Then, for any $\kappa \in (0,1/2)$,
    there exists a point $z_{x,\kappa}\in X$ such that
    \begin{align*}
     \dist_X (z_{x,\kappa},gz_{x,\kappa}) \leq \ &
     3r_{x,\kappa}+ \rho (3r_{x,\kappa} ) 
     \end{align*}for any $g\in G$. There also exists a point
    $z_{x,\kappa}' \in Gx$ such that
    \begin{align*}
     \dist_X (z_{x,\kappa}',gz_{x,\kappa}')\leq \min \{
      6r_{x,\kappa}+\rho(6r_{x,\kappa}), 6r_{x,\kappa}+2\rho(3r_{x,\kappa})        \}
     \end{align*}for any $g\in G$.
    \end{prop}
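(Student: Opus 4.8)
The plan is to imitate the proof of Proposition \ref{p4.1.2}, replacing the input Proposition \ref{p4.1.1} by Corollary \ref{c4.4.2} and then feeding the resulting concentration ball into Proposition \ref{p3.1}.

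First I would set $\nu:=\nu_{G,x}=(f_x)_{\ast}(\mu_G)$, which is a finite Borel measure on $X$ with total mass $m=\mu_G(G)=1$; since a large scale doubling space is in particular a (complete) doubling space, Corollary \ref{c4.4.2} applies to this $\nu$ and the given $\kappa$. Denote by $r_{\kappa}$ the number produced by Corollary \ref{c4.4.2} for $\nu=\nu_{G,x}$; because $m=1$, its three constituent separation terms are $\sep(\nu_{G,x};\kappa,1/C(1,5))$, $\sep(\nu_{G,x};(1-\kappa)/3,(1-\kappa)/3)$, and $\sep(\nu_{G,x};(1-\kappa)/3,\kappa)$. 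Applying Lemma \ref{l3.1} to each of the three pairs $(\kappa,1/C(1,5))$, $((1-\kappa)/3,(1-\kappa)/3)$, $((1-\kappa)/3,\kappa)$ bounds each of these terms by the corresponding $\omega_x(+\sep(G;\cdot,\cdot))$ appearing in the definition of $r_{x,\kappa}$, and hence $r_{\kappa}\le r_{x,\kappa}$. (If $r_{x,\kappa}=+\infty$ there is nothing to prove, so we may assume every quantity in sight is finite.)

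Next, Corollary \ref{c4.4.2} yields a point $x_{\kappa}\in X$ with $\nu_{G,x}(B_X(x_{\kappa},3r_{\kappa}))\ge 1-\kappa$. Since $r_{\kappa}\le r_{x,\kappa}$ we have $B_X(x_{\kappa},3r_{\kappa})\subseteq B_X(x_{\kappa},3r_{x,\kappa})$, and since $\kappa\in(0,1/2)$ we have $1-\kappa>1/2$; therefore $\nu_{G,x}(B_X(x_{\kappa},3r_{x,\kappa}))>1/2$. Now I would invoke Proposition \ref{p3.1} with $y:=x_{\kappa}$ and $\delta:=3r_{x,\kappa}$: inequality (\ref{s3.1}) gives $\dist_X(x_{\kappa},gx_{\kappa})\le 3r_{x,\kappa}+\rho(3r_{x,\kappa})$ for every $g\in G$, so $z_{x,\kappa}:=x_{\kappa}$ settles the first assertion; inequality (\ref{s3.2}) furnishes a point $x_0\in Gx$ with $\dist_X(x_0,gx_0)\le \min\{6r_{x,\kappa}+\rho(6r_{x,\kappa}),\,6r_{x,\kappa}+2\rho(3r_{x,\kappa})\}$ for every $g\in G$, so $z_{x,\kappa}':=x_0$ settles the second assertion.

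There is no genuine obstacle here; the argument is essentially a transcription of the proof of Proposition \ref{p4.1.2}. The only points requiring care are bookkeeping: checking that the three separation arguments built into the definition of $r_{x,\kappa}$ are exactly those that come out of Corollary \ref{c4.4.2} for $\nu_{G,x}$ (using $m=1$), and observing that the monotonicity of $\omega_x$ (encoded in the $\omega_x(+\cdot)$ notation) and of $\rho$ is what legitimises replacing $r_{\kappa}$ by $r_{x,\kappa}$ at every occurrence.
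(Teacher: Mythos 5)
Your argument is correct and is exactly the paper's route: the author proves this proposition by the one-line remark ``Applying Corollary \ref{c4.4.2} to Proposition \ref{p3.1}'', and your write-up simply fills in the details (taking $\nu=\nu_{G,x}$ with $m=1$, using Lemma \ref{l3.1} to pass from $\sep(\nu_{G,x};\cdot,\cdot)$ to $\omega_x(+\sep(G;\cdot,\cdot))$ so that $r_\kappa\le r_{x,\kappa}$, and noting $1-\kappa>1/2$ so that Proposition \ref{p3.1} applies with $\delta=3r_{x,\kappa}$). No gaps.
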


    Assume that a complete metric space $X$ has a doubling measure
    $\nu_X$, that is, $\nu_X$ is a (not only finite) Borel measure on $X$
    having the following properties: $X=\supp \nu_X$ and there exists a
    constant $C=C(X)>0$ such that
    \begin{align}
     \nu_X(B_X(x,2r))\leq C \nu_X(B_X(x,r))
     \end{align}for any $x\in X$ and $r>0$. For example, by virtue of
     the Bishop-Gromov volume comparison theorem, the volume measure of an $n$-dimensional complete Riemannian
     manifold $M$ with nonnegative Ricci curvature is a doubling measure
     with $C(M)=2^n$.

    \begin{lem}[{cf.~\cite[Lemma 2.1]{funad}}]\label{l4.4.1}Let $(X,\nu_X)$ be a complete
     metric space with a doubling measure $\nu_X$. Then, for any $0< r_1\leq r_2$ and $x,y\in
     X$ with $x\in B_X(y,r_2)$, we have
     \begin{align*}
      \frac{\nu_X(B_X(x,r_1))}{\nu_X(B_X(y,r_2))}\geq
      \frac{1}{C^2}\Big(\frac{r_1}{r_2}\Big)^{\log_2
      C}=C^{\log_2 \frac{r_1}{r_2}-2}.
      \end{align*}
     \end{lem}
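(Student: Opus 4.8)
The plan is to prove Lemma \ref{l4.4.1} by iterating the doubling inequality for the measure $\nu_X$. First I would reduce to the case where $r_2/r_1$ is a power of $2$: since $x \in B_X(y,r_2)$ implies $B_X(y,r_2) \subseteq B_X(x, 2r_2)$, it suffices to compare $\nu_X(B_X(x,r_1))$ with $\nu_X(B_X(x,2r_2))$ from below, and here both balls are centered at the same point $x$. So the essential estimate is: for $0 < s \leq t$ and any $x \in X$, bound $\nu_X(B_X(x,t))/\nu_X(B_X(x,s))$ from above.

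Next I would carry out the iteration. Choose the integer $N := \lceil \log_2(t/s) \rceil$, so that $t \leq 2^N s$. Applying the doubling inequality $N$ times gives
\begin{align*}
 \nu_X(B_X(x,t)) \leq \nu_X(B_X(x,2^N s)) \leq C^N \nu_X(B_X(x,s)).
\end{align*}
Since $N < \log_2(t/s) + 1$, we get $C^N \leq C \cdot C^{\log_2(t/s)} = C (t/s)^{\log_2 C}$. Now apply this with $s = r_1$ and $t = 2r_2$, using $B_X(y,r_2) \subseteq B_X(x,2r_2)$: this yields
\begin{align*}
 \frac{\nu_X(B_X(y,r_2))}{\nu_X(B_X(x,r_1))} \leq \frac{\nu_X(B_X(x,2r_2))}{\nu_X(B_X(x,r_1))} \leq C \Big(\frac{2r_2}{r_1}\Big)^{\log_2 C} = C \cdot 2^{\log_2 C} \Big(\frac{r_2}{r_1}\Big)^{\log_2 C} = C^2 \Big(\frac{r_2}{r_1}\Big)^{\log_2 C},
\end{align*}
since $2^{\log_2 C} = C$. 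Taking reciprocals gives exactly the claimed bound $\nu_X(B_X(x,r_1))/\nu_X(B_X(y,r_2)) \geq C^{-2}(r_1/r_2)^{\log_2 C}$, and the final rewriting $C^{-2}(r_1/r_2)^{\log_2 C} = C^{\log_2(r_1/r_2) - 2}$ is just the identity $a^{\log_2 b} = b^{\log_2 a}$ applied to $a = C$, $b = r_1/r_2$.

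There is no real obstacle here; the only point requiring a little care is the ceiling estimate and making sure the stray factor of $2$ coming from enlarging $B_X(y,r_2)$ to $B_X(x,2r_2)$ is absorbed correctly into the constant — it contributes the extra power $2^{\log_2 C} = C$, which is exactly what turns one factor of $C$ into $C^2$. I would write the argument in the order above: first the containment reducing to concentric balls, then the dyadic iteration of the doubling property, then the arithmetic simplification of the exponent.
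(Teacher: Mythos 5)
Your argument is correct: the containment $B_X(y,r_2)\subseteq B_X(x,2r_2)$, the dyadic iteration with $N=\lceil \log_2(2r_2/r_1)\rceil$ doublings, and the absorption of the stray factor $2^{\log_2 C}=C$ into $C^2$ all check out (the only implicit point is $C\geq 1$, which is automatic since $X=\supp\nu_X$ forces all balls to have positive measure). The paper itself gives no proof of Lemma \ref{l4.4.1}, deferring to \cite[Lemma 2.1]{funad}; your proof is the standard argument for that statement and matches the intended one.
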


     \begin{cor}\label{c4.4.3}The space $(X,\nu_X)$ is a large scale doubling
      space with $C_X(r_1,r_2)\leq C^{2+\log_2 \{ (r_1+2r_2)/r_1\}}$. In particular, we have 
      $C_X(1,5)\leq C^{2+\log_2 11}$.
      \begin{proof}Given any $x\in X$ and $r_1,r_2>0$
       with $r_2\geq r_1$, we let $\{ \xi_{\alpha}\}_{\alpha \in \mathcal{A}}
       \subseteq B_X(x,r_2)$ be an arbitrary $r_1$-separated set. Note that
       closed balls $B_X(\xi_{\alpha}, 2^{-1}r_1 -\varepsilon)$ are mutually
       dijoint for any $\varepsilon>0$. We hence have
       \begin{align*}
        \nu_X(B_X(x, 2^{-1}r_1 +r_2))\geq \ &\nu_X\Big(\bigcup_{\alpha
        \in \mathcal{A}}B_X(\xi_{\alpha}, 2^{-1}r_1-\varepsilon) \Big)\\
        =\ & \sum_{\alpha \in \mathcal{A}} \nu_X (B_X(\xi_{\alpha},
        2^{-1}r_1-\varepsilon))\\
        \geq \ & \nu_X(B_X(\xi_{\alpha_0}, 2^{-1}r_1-\varepsilon))\# \mathcal{A},
        \end{align*}where $\nu_X(B_X(\xi_{\alpha_0}, 2^{-1}r_1-\varepsilon))=
       \min_{\alpha\in \mathcal{A}}\nu_X(B_X(\xi_{\alpha},
       2^{-1}r_1-\varepsilon))$. Applying this to Lemma \ref{l4.4.1}, we obtain
      \begin{align*}
       \# \mathcal{A} \leq \frac{\nu_X(B_X(x, 2^{-1}r_1+r_2))}{\nu_X
       (B_X(\xi_{\alpha_0}, 2^{-1}r_1-\varepsilon))}\leq C^{2+ \log_2
       \{ (r_1+2r_2)/ (r_1-2\varepsilon)\}}.
       \end{align*}This completes the proof.
       \end{proof}
     \end{cor}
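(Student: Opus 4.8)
The plan is a standard volume-packing argument for the doubling measure $\nu_X$, followed by the observation that the resulting bound depends on $(r_1,r_2)$ only through the ratio $r_2/r_1$, which hands us the large scale doubling property for free.

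First I would fix a point $x\in X$, radii $r_1,r_2>0$, and an arbitrary $r_1$-separated set $\{\xi_\alpha\}_{\alpha\in\mathcal{A}}\subseteq B_X(x,r_2)$. For each $\varepsilon>0$ the closed balls $B_X(\xi_\alpha,r_1/2-\varepsilon)$ are pairwise disjoint (two of them sharing a point would force two of the $\xi_\alpha$ to lie within $r_1-2\varepsilon<r_1$ of one another) and they all lie inside $B_X(x,r_2+r_1/2)$. Summing $\nu_X$ over this disjoint family and bounding each term from below by the smallest one, say $\nu_X(B_X(\xi_{\alpha_0},r_1/2-\varepsilon))$, gives
\[
\nu_X(B_X(x,r_2+r_1/2))\geq (\#\mathcal{A})\cdot\nu_X(B_X(\xi_{\alpha_0},r_1/2-\varepsilon)).
\]

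Next I would feed in Lemma \ref{l4.4.1}: since $\xi_{\alpha_0}\in B_X(x,r_2)\subseteq B_X(x,r_2+r_1/2)$ and $r_1/2-\varepsilon\leq r_2+r_1/2$, that lemma (applied with $B_X(x,r_2+r_1/2)$ as the large ball and $B_X(\xi_{\alpha_0},r_1/2-\varepsilon)$ as the small one) bounds $\nu_X(B_X(\xi_{\alpha_0},r_1/2-\varepsilon))$ from below by $C^{\log_2\{(r_1/2-\varepsilon)/(r_2+r_1/2)\}-2}\nu_X(B_X(x,r_2+r_1/2))$. Combining this with the previous inequality cancels the common factor $\nu_X(B_X(x,r_2+r_1/2))$ and yields $\#\mathcal{A}\leq C^{2+\log_2\{(r_2+r_1/2)/(r_1/2-\varepsilon)\}}$; letting $\varepsilon\downarrow 0$ gives $\#\mathcal{A}\leq C^{2+\log_2\{(r_1+2r_2)/r_1\}}$. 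Since this is a legitimate choice of $C_X(r_1,r_2)$ and it depends on $(r_1,r_2)$ only through $r_2/r_1$, it satisfies $C_X(\alpha r_1,\alpha r_2)=C_X(r_1,r_2)$, so $X$ is a large scale doubling space with this $C_X$; taking $r_1=1$ and $r_2=5$ gives $C_X(1,5)\leq C^{2+\log_2 11}$.

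I do not anticipate a genuine obstacle here: this is the textbook estimate that an $r_1$-separated subset of a ball packs a definite amount of $\nu_X$-mass into a slightly enlarged ball. The only points that need care are the harmless $\varepsilon$-shrinking that makes the closed balls genuinely disjoint, the radius bookkeeping (the enveloping ball has radius $r_2+r_1/2$, not $r_2$), and checking the hypotheses of Lemma \ref{l4.4.1}, in particular the inclusion $\xi_{\alpha_0}\in B_X(x,r_2+r_1/2)$. The one conceptual point worth stating explicitly is that the large scale doubling conclusion is not a separate argument but simply records the manifest scale-invariance of the packing bound.
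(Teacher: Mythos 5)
Your proposal is correct and follows essentially the same route as the paper: the same packing of disjoint balls $B_X(\xi_\alpha, r_1/2-\varepsilon)$ inside $B_X(x, r_2+r_1/2)$, the same comparison via Lemma \ref{l4.4.1}, and the same resulting exponent. Your explicit check of the hypotheses of Lemma \ref{l4.4.1}, the limit $\varepsilon\downarrow 0$, and the remark that the bound depends only on $r_2/r_1$ are all welcome clarifications of steps the paper leaves implicit.
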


     Combining Corollary \ref{c4.4.2} with Corollary \ref{c4.4.3}, we obtain the
     following corollary:
     \begin{cor}\label{c4.4.4}Let $\nu$ be a finite Borel measure on $(X,\nu_X)$ with
      $m:=\nu(X)$. Put 
     \begin{align*}
    r_{\kappa}:= \max \Big\{ \sep
      (\nu;\kappa,C^{-2-\log_2 11}), \sep \Big(
      \nu;\frac{m-\kappa}{3}, \frac{m-\kappa}{3} \Big), \sep\Big(  \nu;
      \frac{m-\kappa}{3} , \kappa\Big) \Big\}
      \end{align*}for $\kappa>0$. Then, there exists a point $x_{\kappa}\in X$ such that
     $\nu(B_X (x_{\kappa}, 3r_{\kappa}))\geq 1-\kappa$. In particular,
     we have $\diam (\nu,m-\kappa)\leq 6r_{\kappa}$.
    \end{cor}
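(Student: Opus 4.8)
The plan is to deduce Corollary \ref{c4.4.4} by feeding the doubling-measure estimate of Corollary \ref{c4.4.3} into the large-scale doubling space result of Corollary \ref{c4.4.2}. First I would observe that, by Corollary \ref{c4.4.3}, the space $(X,\nu_X)$ is a large scale doubling space and its constant function $C_X$ satisfies $C_X(1,5)\leq C^{2+\log_2 11}$. Therefore Corollary \ref{c4.4.2} applies to the finite Borel measure $\nu$ with $C(1,5)$ replaced by the explicit bound $C^{2+\log_2 11}$; the quantity $r_\kappa$ appearing in Corollary \ref{c4.4.2} is then bounded above by the $r_\kappa$ defined in the present statement, since $m/C(1,5)\geq m\,C^{-2-\log_2 11}$ and $\sep$ is monotone in its mass arguments. (Here one uses $m=\nu(X)=1$, consistent with the normalization in the conclusion $\nu(B_X(x_\kappa,3r_\kappa))\geq 1-\kappa$.)

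Next I would note that, since larger radii only increase the measure of a ball, the point $x_\kappa\in X$ produced by Corollary \ref{c4.4.2} for the (smaller) separation quantity still satisfies $\nu(B_X(x_\kappa,3r_\kappa))\geq m-\kappa$ for the larger $r_\kappa$ of the present statement. This gives the first assertion.

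Finally, for the ``in particular'' clause, I would invoke the definition of the partial diameter: the ball $B_X(x_\kappa,3r_\kappa)$ is a Borel subset of $X$ with $\nu$-measure at least $m-\kappa$ and diameter at most $6r_\kappa$, so $\diam(\nu,m-\kappa)\leq 6r_\kappa$ directly from the infimum defining $\diam(\nu,m-\kappa)$.

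Since every step is a direct substitution or a monotonicity observation, there is no real obstacle here; the only point requiring a moment's care is checking that the explicit constant $C^{2+\log_2 11}$ from Corollary \ref{c4.4.3} is substituted correctly into the three $\sep$-terms and that the inequalities between the two versions of $r_\kappa$ run in the right direction, so that the conclusion for the larger $r_\kappa$ follows from the statement for the smaller one.
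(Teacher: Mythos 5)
Your proposal is correct and follows exactly the route the paper takes: the paper derives Corollary \ref{c4.4.4} by ``combining Corollary \ref{c4.4.2} with Corollary \ref{c4.4.3},'' i.e.\ substituting the bound $C_X(1,5)\leq C^{2+\log_2 11}$ into the large-scale-doubling statement, with the monotonicity of $\sep$ in its mass arguments and the definition of the partial diameter supplying the remaining (routine) details just as you describe. Your parenthetical observation that the normalization $m=1$ is implicitly used is a fair catch of a small inconsistency in the paper's statement ($1-\kappa$ versus $m-\kappa$), but it does not change the argument.
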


    By using Corollary \ref{c4.4.4}, we obtain the following propostion:
    \begin{prop}\label{p4.4.3}Assume that a compact metric group $G$ continuously acts on
     $(X,\nu_X)$. Put
     \begin{align*}
     r_{x,\kappa}:= &\max \Big\{ \omega_x (+\sep
      (G;\kappa,C^{-2-\log_2 11})), \omega_x\Big(+\sep \Big(
      G;\frac{1-\kappa}{3}, \frac{1-\kappa}{3} \Big)\Big), \\
      &\hspace{10cm} \omega_x\Big(+\sep\Big(  G;
      \frac{1-\kappa}{3} , \kappa\Big)\Big) \Big\}
     \end{align*}for $x\in X$ and $\kappa>0$. Then, for any $\kappa \in (0,1/2)$,
    there exists a point $z_{x,\kappa}\in X$ such that
    \begin{align*}
     \dist_X (z_{x,\kappa},gz_{x,\kappa}) \leq \ &
     3r_{x,\kappa}  + \rho(3r_{x,\kappa})
     \end{align*}for any $g\in G$. There also exists a point
    $z_{x,\kappa}' \in Gx$ such that
    \begin{align*}
     \dist_X (z_{x,\kappa}',gz_{x,\kappa}')\leq \min \{
     6r_{x,\kappa}+\rho(6r_{x,\kappa}), 6r_{x,\kappa}+2\rho(3r_{x,\kappa})        \}
     \end{align*}for any $g\in G$.
     \end{prop}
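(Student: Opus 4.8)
The plan is to imitate the proof of Proposition \ref{p4.4.2} essentially verbatim, replacing the use of Corollary \ref{c4.4.2} by the sharper Corollary \ref{c4.4.4}; this is legitimate because a complete metric space carrying a doubling measure is a large scale doubling space with the explicit bound $C_X(1,5)\le C^{2+\log_2 11}$ supplied by Corollary \ref{c4.4.3}. Throughout I would work with the Borel probability measure $\nu:=\nu_{G,x}=(f_x)_\ast(\mu_G)$ on $X$, whose support lies in the compact orbit $Gx$, and I fix $\kappa\in(0,1/2)$.

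First I would apply Corollary \ref{c4.4.4} to $\nu$ (so that $m=\nu(X)=1$). This produces the number
\begin{align*}
 r_\kappa:=\max\Big\{\sep\big(\nu;\kappa,C^{-2-\log_2 11}\big),\ \sep\Big(\nu;\frac{1-\kappa}{3},\frac{1-\kappa}{3}\Big),\ \sep\Big(\nu;\frac{1-\kappa}{3},\kappa\Big)\Big\}
\end{align*}
together with a point $x_\kappa\in X$ such that $\nu(B_X(x_\kappa,3r_\kappa))\ge 1-\kappa$. The next step is to compare $r_\kappa$ with the quantity $r_{x,\kappa}$ in the statement: Lemma \ref{l3.1} gives $\sep(\nu;\kappa_1,\kappa_2)\le\omega_x(+\sep(G;\kappa_1,\kappa_2))$ for all $\kappa_1,\kappa_2>0$, and applying it to each of the three separation distances above identifies the three bounds with the three terms defining $r_{x,\kappa}$, whence $r_\kappa\le r_{x,\kappa}$.

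Finally, since $\kappa<1/2$ and $r_\kappa\le r_{x,\kappa}$ we get $\nu(B_X(x_\kappa,3r_{x,\kappa}))\ge\nu(B_X(x_\kappa,3r_\kappa))\ge 1-\kappa>1/2$, so Proposition \ref{p3.1} applies with $y=x_\kappa$ and $\delta=3r_{x,\kappa}$. Its first assertion (\ref{s3.1}) shows that $z_{x,\kappa}:=x_\kappa$ satisfies $\dist_X(z_{x,\kappa},gz_{x,\kappa})\le 3r_{x,\kappa}+\rho(3r_{x,\kappa})$ for every $g\in G$, and its second assertion (\ref{s3.2}) yields a point $x_0\in Gx$ with $\dist_X(x_0,gx_0)\le\min\{6r_{x,\kappa}+\rho(6r_{x,\kappa}),\,6r_{x,\kappa}+2\rho(3r_{x,\kappa})\}$ for every $g\in G$; one then sets $z_{x,\kappa}':=x_0$. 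I do not anticipate any genuine obstacle here: the proof is a routine assembly of Corollary \ref{c4.4.4}, Lemma \ref{l3.1}, and Proposition \ref{p3.1}, exactly as in Propositions \ref{p4.4.1} and \ref{p4.4.2}. The only points needing a little care are the monotonicity of $\rho$ and $\omega_x$ (so that enlarging $r_\kappa$ to $r_{x,\kappa}$ is harmless) and the bookkeeping of the one-sided limits hidden in the $+$ notation, but both are already absorbed into the statements of Lemma \ref{l3.1} and Proposition \ref{p3.1}.
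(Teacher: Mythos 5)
Your proposal is correct and follows exactly the route the paper intends: the paper gives no written proof beyond ``By using Corollary \ref{c4.4.4}, we obtain the following proposition,'' and the intended argument is precisely your assembly of Corollary \ref{c4.4.4} applied to $\nu_{G,x}$, Lemma \ref{l3.1} to convert the separation distances of $\nu_{G,x}$ into the $\omega_x(+\sep(G;\cdot,\cdot))$ terms defining $r_{x,\kappa}$, and Proposition \ref{p3.1} with $\delta=3r_{x,\kappa}$, just as in Propositions \ref{p4.4.1} and \ref{p4.4.2}.
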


     \begin{cor}Assume that a compact metric group $G$ continuously acts on an
      $n$-dimensional complete Riemannian manifold $M$ with nonnegative
      Ricci curvature. Put
      \begin{align*}
        r_{\kappa}:= &\max \Big\{ \omega_x (+\sep
      (G;\kappa,2^{-(2+\log_2 11)n})), \omega_x\Big(+\sep \Big(
      G;\frac{1-\kappa}{3}, \frac{1-\kappa}{3} \Big)\Big), \\
      &\hspace{10cm} \omega_x\Big(+\sep\Big(  G;
      \frac{1-\kappa}{3} , \kappa\Big)\Big) \Big\}
       \end{align*}for $x\in M$ and $\kappa>0$. Then, for any $x\in
      M$ and $\kappa \in (0,1/2)$,
    there exists a point $z_{x,\kappa}\in M$ such that
    \begin{align*}
     \dist_M (z_{x,\kappa},gz_{x,\kappa}) \leq \ &
     3r_{x,\kappa}  + \rho(3r_{x,\kappa})
     \end{align*}for any $g\in G$. There also exists a point
    $z_{x,\kappa}' \in Gx$ such that
    \begin{align*}
     \dist_M (z_{x,\kappa}',gz_{x,\kappa}')\leq \min \{
     6r_{x,\kappa}+\rho(6r_{x,\kappa}), 6r_{x,\kappa}+2\rho(3r_{x,\kappa})  \}
     \end{align*}for any $g\in G$.
      \end{cor}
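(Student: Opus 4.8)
The plan is to recognize this corollary as a direct specialization of Proposition \ref{p4.4.3} to the case $X = M$, once the doubling constant of the Riemannian volume measure is identified. So the first step is to recall, as was already noted in the paragraph preceding Lemma \ref{l4.4.1}, that the Bishop--Gromov volume comparison theorem implies that the normalized volume measure $\nu_M$ of an $n$-dimensional complete Riemannian manifold $M$ with nonnegative Ricci curvature is a doubling measure with doubling constant $C(M) = 2^n$; indeed, comparing with Euclidean space, $\nu_M(B_M(x,2r)) \leq 2^n \nu_M(B_M(x,r))$ for all $x \in M$ and $r > 0$.

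The second step is simply to feed $C = C(M) = 2^n$ into Proposition \ref{p4.4.3}. With this value of $C$ one has $C^{-2-\log_2 11} = 2^{-(2+\log_2 11)n}$, which is exactly the quantity appearing in the definition of $r_\kappa$ in the statement of the corollary. Therefore the hypothesis and notation of Proposition \ref{p4.4.3} match those of the corollary verbatim after this substitution.

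The third and final step is to invoke the conclusion of Proposition \ref{p4.4.3}: for any $x \in M$ and $\kappa \in (0,1/2)$ there is a point $z_{x,\kappa} \in M$ with $\dist_M(z_{x,\kappa}, g z_{x,\kappa}) \leq 3 r_{x,\kappa} + \rho(3 r_{x,\kappa})$ for all $g \in G$, and a point $z_{x,\kappa}' \in Gx$ with $\dist_M(z_{x,\kappa}', g z_{x,\kappa}') \leq \min\{6 r_{x,\kappa} + \rho(6 r_{x,\kappa}),\, 6 r_{x,\kappa} + 2\rho(3 r_{x,\kappa})\}$ for all $g \in G$. This is precisely the assertion of the corollary, so the proof is complete.

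There is essentially no obstacle here: the only content is the identification of the doubling constant via Bishop--Gromov, and that computation is standard and was already flagged in the text. Everything else is a substitution into an already-proved proposition, so the proof is a one-paragraph application once that constant is in hand.
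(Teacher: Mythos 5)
Your proposal is correct and is exactly the route the paper intends: the corollary is stated as an immediate specialization of Proposition \ref{p4.4.3}, with the doubling constant $C(M)=2^n$ supplied by the Bishop--Gromov comparison theorem (as noted in the paragraph preceding Lemma \ref{l4.4.1}), so that $C^{-2-\log_2 11}=2^{-(2+\log_2 11)n}$. Nothing further is needed.
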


    \subsection{Case of metric graphs}In this subsection, we treat the
     case where $X$ is a metric graph. Let $\Gamma=(V,E)$ be a (possibly
      infinite) undirected connected combinatorial graph, that is,
      $\Gamma$ is a $1$-dimensional cell complex with the set $V$ of vertices and
      the set $E$ of edges. We allow
      the graph $\Gamma$ to have multiple edges and loops. For vertices
      $v,w\in V$ which are endpoints of an edge, we assign a
      positive number $a_{v,w}$ such that $a_{\Gamma}:=\inf_{v'\neq
      w'}a_{v'w'}>0$.  Every edge is
      identified with a bounded closed interval or a circle in $\mathbb{R}^2$
      with lengh $a_{vw}$, where $v$ and $w$ are endpoints of the
      edge. We then define the distance between two points in $\Gamma$
      to be the infimum of the length of paths joining them. The graph
      $\Gamma$  together with such a distance function is called a
      \emph{metric graph}.

      \begin{lem}\label{l4.5.1}Let $(C,\dist_C)$ be a circle in $\mathbb{R}^2$ with
       the Riemannian distance function $\dist_C$ and $\nu $ a
       finite Borel measure on $C$ with $m:=\nu(C)$. Then, for any
       $\kappa>0$, we have
       \begin{align*}
        \diam(\nu,m-\kappa)\leq \frac{\pi}{\sqrt{2}} \sep \Big(\nu;
        \frac{\kappa}{4}, \frac{\kappa}{4} \Big)
        \end{align*}
       \begin{proof}Note that
        \begin{align*}
         \dist_{\mathbb{R}^2}(x,y)\leq \dist_C(x,y) \leq \frac{\pi}{2} \dist_{\mathbb{R}^2}(x,y)
         \end{align*}for any $x,y\in C$. Denoting by $\pr_i:\mathbb{R}^2
        \ni (x_1,x_2)\mapsto x_i\in \mathbb{R}$ the projection, by using
        Lemma \ref{l2.1.2}, we
        therefore obtain
        \begin{align*}
         \diam (\nu,m-\kappa)=\ & \diam (\nu|_{(C,\dist_C)},m-\kappa)\\
         \leq \ &\frac{\pi}{2}\diam
         (\nu|_{(C,\dist_{\mathbb{R}^2})},m-\kappa)\\
         \leq \ &\frac{\pi}{\sqrt{2}}\max_{i=1,2} \diam\Big(
         (\pr_i)_{\ast}(
         \nu|_{(C,\dist_{\mathbb{R}^2})}),m-\frac{\kappa}{2}
         \Big)\\
         \leq \ &\frac{\pi}{\sqrt{2}}\max_{i=1,2} \sep \Big( (\pr_i)_{\ast}(
         \nu|_{(C,\dist_{\mathbb{R}^2})});\frac{\kappa}{4},\frac{\kappa}{4}
         \Big)\\
          \leq \ &\frac{\pi}{\sqrt{2}} \sep \Big( 
         \nu|_{(C,\dist_{\mathbb{R}^2})};\frac{\kappa}{4},\frac{\kappa}{4}
         \Big)\\
         \leq \ & \frac{\pi}{\sqrt{2}} \sep \Big( 
         \nu;\frac{\kappa}{4},\frac{\kappa}{4}
         \Big).
         \end{align*}This completes the proof.
        \end{proof}
       \end{lem}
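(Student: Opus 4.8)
The plan is to push the problem from the intrinsic Riemannian metric $\dist_C$ on the circle to the ambient Euclidean metric $\dist_{\mathbb{R}^2}$, where the coordinate-projection estimate of Proposition \ref{p4.1.1} is available, and then come back. The one genuinely geometric ingredient is the bi-Lipschitz comparison
\[
\dist_{\mathbb{R}^2}(x,y)\leq \dist_C(x,y)\leq \frac{\pi}{2}\,\dist_{\mathbb{R}^2}(x,y)\qquad(x,y\in C).
\]
The left inequality is just the triangle inequality in $\mathbb{R}^2$. For the right one, if $x,y\in C$ subtend central angle $\theta\in(0,\pi]$ on a circle of radius $r$, then $\dist_C(x,y)=r\theta$ and $\dist_{\mathbb{R}^2}(x,y)=2r\sin(\theta/2)$, so the ratio equals $\theta/(2\sin(\theta/2))$, which is increasing on $(0,\pi]$ with maximum $\pi/2$ attained at $\theta=\pi$.

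Since the diameter of a subset scales by the Lipschitz constant, this comparison gives at once $\diam(\nu|_{(C,\dist_C)},m-\kappa)\leq \frac{\pi}{2}\,\diam(\nu|_{(C,\dist_{\mathbb{R}^2})},m-\kappa)$ for every $\kappa>0$. I would then apply Proposition \ref{p4.1.1} with $k=2$ to the Euclidean measure, obtaining a bound by $\sqrt 2\,\max_{i=1,2}\diam\big((\pr_i)_{\ast}(\nu|_{(C,\dist_{\mathbb{R}^2})}),m-\kappa/2\big)$, and apply Lemma \ref{l2.1.2} to each of the two pushed-forward measures on $\mathbb{R}$, with the role of $\kappa$ in that lemma played by $\kappa/4$, to replace every partial diameter by $\sep\big((\pr_i)_{\ast}(\nu|_{(C,\dist_{\mathbb{R}^2})});\kappa/4,\kappa/4\big)$.

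Finally I would return from $\mathbb{R}$ and then from $\dist_{\mathbb{R}^2}$ to $\dist_C$, at the level of separation distances, using two monotonicity facts. Because $\pr_i\colon(\mathbb{R}^2,\dist_{\mathbb{R}^2})\to\mathbb{R}$ is $1$-Lipschitz, pulling back any pair of competing Borel sets along $\pr_i$ (the preimages have the same masses and are at least as far apart) gives $\sep\big((\pr_i)_{\ast}\nu;\kappa/4,\kappa/4\big)\leq \sep\big(\nu|_{(C,\dist_{\mathbb{R}^2})};\kappa/4,\kappa/4\big)$; and because $\dist_C\geq\dist_{\mathbb{R}^2}$ on $C$, any pair of sets is at least as far apart in $\dist_C$, so $\sep\big(\nu|_{(C,\dist_{\mathbb{R}^2})};\kappa/4,\kappa/4\big)\leq \sep\big(\nu|_{(C,\dist_C)};\kappa/4,\kappa/4\big)=\sep(\nu;\kappa/4,\kappa/4)$. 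Chaining all the inequalities and multiplying the constants, $\frac{\pi}{2}\cdot\sqrt 2=\frac{\pi}{\sqrt2}$, yields the assertion. There is no real obstacle here; the only points needing care are tracking the parameter $\kappa$ through the two successive halvings (which combine to the $\kappa/4$ in the statement) and extracting the sharp constant $\pi/2$ rather than a cruder one in the bi-Lipschitz step.
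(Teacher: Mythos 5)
Your proposal is correct and follows essentially the same route as the paper's proof: the bi-Lipschitz comparison $\dist_{\mathbb{R}^2}\leq\dist_C\leq\frac{\pi}{2}\dist_{\mathbb{R}^2}$, then Proposition \ref{p4.1.1} with $k=2$, then Lemma \ref{l2.1.2} with parameter $\kappa/4$, and finally the two monotonicity facts for the separation distance to return to $\nu$ on $(C,\dist_C)$. The only difference is that you spell out the verifications of the comparison inequality and of the separation-distance monotonicity, which the paper leaves implicit.
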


       For every edge $e\in E$ and $r>0$, we put $e_{-r}:=\{ x\in e \mid
       \dist_{\Gamma}(e,v)>r \text{ and } \dist_{\Gamma}(e,w)>r\}$,
       where $v$ and $w$ are endpoints of the edge $e$. 
    \begin{thm}\label{t4.5.1}Let $\nu$ be a finite Borel measure on a metric graph $\Gamma$ with
     $m:=\nu(\Gamma)$. Assume that postive numbers $a,\kappa,\kappa'$
     satisfy that $\kappa'< \kappa$, $a<a_{\Gamma}$, and 
     \begin{align*}
      \max \Big\{  2\sep \Big(
      \nu;\frac{\kappa}{3},\frac{\kappa}{3}\Big), 4\sep \Big(
      \nu;\frac{m-\kappa}{3}, \kappa'\Big)                    \Big\}<a
      \end{align*}Then, we have
     \begin{align}\label{s4.5.1}
      \diam (\nu,m-\kappa)\leq \max \Big\{  \frac{a}{2}+2\sep \Big(\nu;
      \frac{\kappa}{3},\kappa\Big), \frac{\pi}{\sqrt{2}}\sep \Big(\nu;\frac{\kappa-\kappa'}{4},\frac{\kappa-\kappa'}{4}\Big)  \Big\}.
      \end{align}
     \begin{proof}We first consider the case of $\nu (\bigcup_{v\in
      V}B_X(v,a/4))\geq \kappa$. Since $\sep
      (\nu;\kappa/3,\kappa/3)<a/2$, as in the proof of Claim \ref{cl4.4.3}, there
      exists a vertex $v\in V$ such that $\nu (B_X(v,a/4))\geq \kappa
      /3$. We thus obtain $\nu (B_X(v,a/4
      +\sep(\nu;\kappa/3,\kappa/3)))\geq m-\kappa$, which implies (\ref{s4.5.1}).

      We consider the other case that $\nu (X\setminus \bigcup_{v\in
      V}B_X(v,a/4)) >m-\kappa$. By the same method of Claim \ref{cl4.4.3}, either the
      following (1) or (2) holds:

      (1) There exists an edge $e\in E$ such that $e$ is not a loop and $\nu (e_{-a/4})\geq
      (m-\kappa)/3$.

      (2) There exists a loop $\ell\in E$ with $\nu (\ell_{-a/4})\geq
      (m-\kappa)/3$.

      If (1) holds, combining the same proof of Claim \ref{cl4.4.2} with
      $\sep(\nu;\kappa/3,\kappa')<a/4$, we then have $\nu (e)\geq
      m-\kappa'$. We therefore obtain
      \begin{align*}
       \diam (\nu,m-\kappa)\leq \ &\diam (\nu|_{e},m-\kappa)\\ =\ &\diam
       (\nu|_{e}, \nu(e)-(\nu(e)-m+\kappa))\\
       \leq \ &\sep \Big(\nu|_{e};
       \frac{\nu(e)-m+\kappa}{2},\frac{\nu(e)-m+\kappa}{2}\Big)\\
       \leq \ & \sep \Big(\nu; \frac{\kappa-\kappa'}{2}, \frac{\kappa-\kappa'}{2}\Big).
       \end{align*}If (2) holds, by Claim \ref{cl4.4.2} and
      $\sep(\nu;\kappa/3,\kappa')< a/4$, we then get
      $\nu(\ell)\geq m-\kappa'$. Applying Lemma \ref{l4.5.1}, we therefore obtain
      \begin{align*}
       \diam (\nu,m-\kappa)\leq \ &\diam (\nu|_{\ell},m-\kappa)\\
       = \ & \diam (\nu|_{\ell}, \nu(\ell)- (\nu(\ell)-m+\kappa))\\
       \leq \ & \frac{\pi}{\sqrt{2}} \sep \Big( \nu|_{\ell};
       \frac{\nu(\ell)-m+\kappa}{4},  \frac{\nu(\ell)-m+\kappa}{4}\Big)\\
       \leq \ & \frac{\pi}{\sqrt{2}} \sep \Big( \nu|_{\ell};
       \frac{\kappa -\kappa'}{4},  \frac{\kappa -\kappa'}{4}\Big)\\
         \leq \ & \frac{\pi}{\sqrt{2}} \sep \Big( \nu ;
       \frac{\kappa -\kappa'}{4},  \frac{\kappa -\kappa'}{4}\Big).
       \end{align*}This completes the proof of the theorem.
      \end{proof}
     \end{thm}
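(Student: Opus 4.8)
The plan is to split into two cases according to where the mass of $\nu$ concentrates: either at least $\kappa$ of the total mass lies in the $(a/4)$-neighbourhood $\bigcup_{v\in V}B_\Gamma(v,a/4)$ of the vertex set, or else more than $m-\kappa$ of it lies on the union $\bigcup_{e\in E}e_{-a/4}$ of the deep interiors of the edges. Two elementary consequences of the hypothesis $a<a_\Gamma$ will be used throughout, both because any path leaving an edge must run through one of its endpoints: (i) any two of the sets among $\{B_\Gamma(v,a/4)\}_{v\in V}$ and $\{e_{-a/4}\}_{e\in E}$ that lie on distinct edges are more than $a/4$ apart, and more than $a/2$ apart when both are edge-interiors; (ii) each $e_{-a/4}$ is at distance more than $a/4$ from $\Gamma\setminus e$. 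Moreover each $e_{-a/4}$ is connected --- an interval if $e$ is an ordinary edge, an arc if $e$ is a loop.

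In the first case I would run the pigeonhole step of Claim \ref{cl4.4.3}: since $\sep(\nu;\kappa/3,\kappa/3)<a/2$ and distinct vertex balls are more than $a/2$ apart, the masses $\nu(B_\Gamma(v,a/4))$ cannot all be below $\kappa/3$, so some vertex $v$ has $\nu(B_\Gamma(v,a/4))\geq\kappa/3$. Then the argument of Claim \ref{cl4.4.2} applies: if $\Gamma\setminus B_\Gamma(v,a/4+\sep(\nu;\kappa/3,\kappa))$ carried mass more than $\kappa$, we would have two sets, of masses $\geq\kappa/3$ and $>\kappa$, at distance more than $\sep(\nu;\kappa/3,\kappa)$, which is impossible; hence $\nu(B_\Gamma(v,a/4+\sep(\nu;\kappa/3,\kappa)))\geq m-\kappa$ and $\diam(\nu,m-\kappa)\leq a/2+2\sep(\nu;\kappa/3,\kappa)$, the first term of the asserted maximum.

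In the second case more than $m-\kappa$ of the mass sits on $\bigcup_{e}e_{-a/4}$, and since these pieces are pairwise more than $a/2$ apart, connected, and the bound $\sep(\nu;\kappa/3,\kappa/3)<a/2$ controls the relevant separations, the Claim \ref{cl4.4.3}-type splitting produces a single edge or loop $e$ with $\nu(e_{-a/4})\geq(m-\kappa)/3$. Using $4\sep(\nu;(m-\kappa)/3,\kappa')<a$ together with fact (ii), the Claim \ref{cl4.4.2}-type argument upgrades this to $\nu(e)\geq m-\kappa'$, so that $\diam(\nu,m-\kappa)=\diam(\nu|_e,m-\kappa)=\diam(\nu|_e,\nu(e)-(\nu(e)-m+\kappa))$ with $\nu(e)-m+\kappa\geq\kappa-\kappa'$. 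If $e$ is an ordinary edge it is isometric to an interval of $\mathbb{R}$, so Lemma \ref{l2.1.2} bounds this by $\sep(\nu|_e;(\kappa-\kappa')/2,(\kappa-\kappa')/2)\leq\frac{\pi}{\sqrt2}\sep(\nu;(\kappa-\kappa')/4,(\kappa-\kappa')/4)$; if $e$ is a loop it is a circle, and Lemma \ref{l4.5.1} bounds it by $\frac{\pi}{\sqrt2}\sep(\nu|_e;(\kappa-\kappa')/4,(\kappa-\kappa')/4)\leq\frac{\pi}{\sqrt2}\sep(\nu;(\kappa-\kappa')/4,(\kappa-\kappa')/4)$, the second term of the maximum.

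I expect the main obstacle to be bookkeeping rather than new ideas: one must extract (i) and (ii) cleanly from $a<a_\Gamma$, make sure each greedy/pigeonhole step lands on exactly the $\sep$-parameters in the hypothesis, and track the inequality $\nu(e)-m+\kappa\geq\kappa-\kappa'$ through Lemmas \ref{l2.1.2} and \ref{l4.5.1} so that the constants come out precisely as $a/2+2\sep(\nu;\kappa/3,\kappa)$ and $\frac{\pi}{\sqrt2}\sep(\nu;(\kappa-\kappa')/4,(\kappa-\kappa')/4)$. The one substantive analytic input, the circle estimate, has already been isolated as Lemma \ref{l4.5.1} (itself obtained by projecting to $\mathbb{R}^2$ and invoking Lemma \ref{l2.1.2}), and all the covering and partition manipulations are of the kind carried out in the proof of Theorem \ref{t4.4.1}.
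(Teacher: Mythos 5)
Your proposal follows the paper's proof essentially verbatim: the same dichotomy between mass concentrated near the vertex set and mass concentrated in the deep edge interiors $e_{-a/4}$, the same pigeonhole arguments modelled on Claims \ref{cl4.4.2} and \ref{cl4.4.3}, and the same final appeal to Lemma \ref{l2.1.2} for ordinary edges and to Lemma \ref{l4.5.1} for loops, yielding exactly the two terms of the maximum in (\ref{s4.5.1}). You even invoke the separation $\sep\big(\nu;\frac{m-\kappa}{3},\kappa'\big)$ from the hypothesis at the step where the paper's text misprints it as $\sep(\nu;\kappa/3,\kappa')$, so no further comment is needed.
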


     \begin{cor}Let $\{X_n\}_{n=1}^{\infty}$ be a L\'{e}vy family and
      $\Gamma$ a metric graph. Then, for any $\kappa >0$, we have
      \begin{align*}
       \lim_{n\to \infty}\obs_{\Gamma}(X_n;-\kappa)=0.
       \end{align*}
      \end{cor}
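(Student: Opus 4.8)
Fix $\kappa>0$. If $\kappa\geq\mu_{X_n}(X_n)$ then $\diam(f_\ast(\mu_{X_n}),\mu_{X_n}(X_n)-\kappa)=0$ for every $1$-Lipschitz map $f$, hence $\obs_\Gamma(X_n;-\kappa)=0$; so we may assume $0<\kappa<1$ and, as is customary for L\'evy families, that $\mu_{X_n}(X_n)=1$ for all $n$. Put
\[
\kappa':=\tfrac{\kappa}{2},\qquad \delta:=\min\Big\{\tfrac{1-\kappa}{3},\ \tfrac{\kappa}{8}\Big\}>0,\qquad s_n:=\sep(X_n;\delta,\delta),
\]
so that $s_n\to 0$ as $n\to\infty$ by Corollary \ref{c2.1.1}. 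The plan is to feed Theorem \ref{t4.5.1} with the push-forward measures $\nu:=f_\ast(\mu_{X_n})$ and to bound its output uniformly over $1$-Lipschitz maps $f\colon X_n\to\Gamma$.

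First I would record the $1$-Lipschitz contraction of the separation distance: if $A,B\subseteq\Gamma$ are Borel with $\nu(A)\geq\kappa_1$ and $\nu(B)\geq\kappa_2$, then $f^{-1}(A),f^{-1}(B)$ have $\mu_{X_n}$-measure at least $\kappa_1,\kappa_2$ and $\dist_\Gamma(A,B)\leq\dist_{X_n}(f^{-1}(A),f^{-1}(B))$, whence $\sep(\nu;\kappa_1,\kappa_2)\leq\sep(X_n;\kappa_1,\kappa_2)$. Since $\sep(X_n;\cdot,\cdot)$ is non-increasing in each of its arguments and each of the numbers $\kappa/3$, $(1-\kappa)/3$, $\kappa'=\kappa/2$ and $(\kappa-\kappa')/4=\kappa/8$ is $\geq\delta$, every separation quantity that occurs in the hypothesis of Theorem \ref{t4.5.1} and on the right-hand side of \eqref{s4.5.1} is bounded above by $s_n$.

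The one delicate point is the choice of the length parameter $a$: the estimate \eqref{s4.5.1} carries the stand-alone term $a/2$, while the hypothesis forces $\max\{2\sep(\cdots),4\sep(\cdots)\}<a<a_\Gamma$, so $a$ cannot be kept fixed but must be allowed to shrink with $n$ while staying above $4s_n$. I would therefore take
\[
a_n:=\min\Big\{\tfrac{a_\Gamma}{2},\ 5s_n+\tfrac{1}{n}\Big\}.
\]
Then $a_n<a_\Gamma$ for every $n$, $a_n\to 0$, and for all $n$ large enough (those with $s_n<a_\Gamma/8$) one has $a_n>4s_n\geq\max\{2\sep(\nu;\tfrac{\kappa}{3},\tfrac{\kappa}{3}),\,4\sep(\nu;\tfrac{1-\kappa}{3},\kappa')\}$; together with $\kappa'<\kappa$ this makes Theorem \ref{t4.5.1} applicable with the data $(a_n,\kappa,\kappa')$, giving
\[
\diam(\nu,1-\kappa)\leq\max\Big\{\tfrac{a_n}{2}+2s_n,\ \tfrac{\pi}{\sqrt 2}\,s_n\Big\}
\]
for every $1$-Lipschitz $f\colon X_n\to\Gamma$. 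The right-hand side is independent of $f$ and tends to $0$, so taking the supremum over $f$ yields $\obs_\Gamma(X_n;-\kappa)\to 0$, as desired.

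The main (and essentially only) obstacle is this final balancing act: because Theorem \ref{t4.5.1} controls $\diam(\nu,m-\kappa)$ only up to an additive multiple of $a$ and only for $a<a_\Gamma$, one needs a diagonal choice $a=a_n$ that is simultaneously large enough to satisfy the hypothesis and small enough to vanish in the limit, which forces $a_n\asymp s_n$. The remaining steps — the push-forward and monotonicity bookkeeping for $\sep$ and the appeal to Corollary \ref{c2.1.1} — are routine.
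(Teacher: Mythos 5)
Your proof is correct and follows exactly the route the paper intends (the corollary is stated without proof, as an immediate consequence of Theorem \ref{t4.5.1} combined with Corollary \ref{c2.1.1} and the standard push-forward monotonicity of $\sep$). Your explicit diagonal choice $a_n\asymp s_n$, which must shrink to kill the additive $a/2$ term while staying above the separation bounds required by the hypothesis, is precisely the point the paper leaves implicit, and you handle it correctly.
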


      By virtue of Theorem \ref{t4.5.1}, we obtain the following:
     \begin{prop}Assume that a compact metric group $G$ continuously
      acts on a metric graph $\Gamma$. We also assume that postive
      numbers $a,\kappa,\kappa'$ and a point $x\in X$ satisfy that $\kappa'< \kappa$, $a<a_{\Gamma}$, and 
      \begin{align*}
       \max\Big\{2\omega_x\Big(+\sep\Big(G;\frac{\kappa}{3},
       \frac{\kappa}{3}\Big)\Big), 4\omega_x \Big(+\sep
       \Big(G;\frac{1-\kappa}{3}, \kappa'\Big)\Big)\Big\}<a.
       \end{align*}Put
      \begin{align*}
       s_{x,\kappa,\kappa'}:= \max \Big\{ \frac{a}{2} + 2\omega_x
       \Big(+\sep \Big(G;\frac{\kappa}{3},\kappa  \Big)\Big),
       \frac{\pi}{\sqrt{2}}\omega_x \Big(+\sep \Big( G;\frac{\kappa-\kappa'}{4},\frac{\kappa-\kappa'}{4}\Big)\Big)\Big\}.
       \end{align*}Then, there exists a point $z_{x,\kappa,\kappa'}\in Gx$ such that
      \begin{align*}
       \dist_X( z_{x,\kappa,\kappa'}, gz_{x,\kappa,\kappa'})\leq
       s_{x,\kappa,\kappa'}+ \rho(s_{x,\kappa,\kappa'})
       \end{align*}for any $g\in G$.
      \end{prop}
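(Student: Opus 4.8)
The scheme is the one already used for Euclidean spaces, compact spaces, $\mathbb{R}$-trees, and doubling spaces: apply the partial-diameter estimate of Theorem \ref{t4.5.1} to the orbit measure $\nu_{G,x}:=(f_x)_{\ast}(\mu_G)$ and then convert the resulting bound on $\diam(\nu_{G,x},1-\kappa)$ into a statement about orbits via Proposition \ref{p3.2}. The bridge between $\Gamma$ and $G$ is always Lemma \ref{l3.1}, which replaces any separation distance of $\nu_{G,x}$ by $\omega_x$ of the corresponding separation distance of $G$. Note first that $\nu_{G,x}$ is a finite Borel measure on $\Gamma$ with total mass $m=\mu_G(G)=1$, so that $(m-\kappa)/3=(1-\kappa)/3$.

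First I would check that $a,\kappa,\kappa'$ satisfy the hypotheses of Theorem \ref{t4.5.1} for $\nu=\nu_{G,x}$. The conditions $\kappa'<\kappa$ and $a<a_\Gamma$ are assumed outright. For the remaining one, Lemma \ref{l3.1} gives $\sep(\nu_{G,x};\kappa/3,\kappa/3)\le\omega_x(+\sep(G;\kappa/3,\kappa/3))$ and $\sep(\nu_{G,x};(1-\kappa)/3,\kappa')\le\omega_x(+\sep(G;(1-\kappa)/3,\kappa'))$, so $\max\{2\sep(\nu_{G,x};\kappa/3,\kappa/3),\,4\sep(\nu_{G,x};(1-\kappa)/3,\kappa')\}$ is at most the quantity assumed in the hypothesis to be $<a$. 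Theorem \ref{t4.5.1} then applies and yields $\diam(\nu_{G,x},1-\kappa)\le\max\{a/2+2\sep(\nu_{G,x};\kappa/3,\kappa),\,(\pi/\sqrt2)\,\sep(\nu_{G,x};(\kappa-\kappa')/4,(\kappa-\kappa')/4)\}$. Applying Lemma \ref{l3.1} once more to each of these two separation distances, and using that $\max$ preserves inequalities, we obtain $\diam(\nu_{G,x},1-\kappa)\le s_{x,\kappa,\kappa'}$.

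It remains to pass from this partial-diameter bound to an honest orbit point. Since the support of $\nu_{G,x}$ lies in the compact orbit $Gx$, the infimum defining $\diam(\nu_{G,x},1-\kappa)$ is attained by a closed subset $A$ of $Gx$ — a routine Blaschke-type selection argument, the only point being upper semicontinuity of $\nu_{G,x}$ on compact sets under Hausdorff convergence — so $A$ is Borel with $\nu_{G,x}(A)\ge1-\kappa$ and $\diam A=\diam(\nu_{G,x},1-\kappa)\le s_{x,\kappa,\kappa'}$. For $\kappa<1/2$, which is where that restriction enters, $\nu_{G,x}(A)>1/2$, so Proposition \ref{p3.2} produces a point $z_{x,\kappa,\kappa'}\in Gx$ with $\dist_\Gamma(z_{x,\kappa,\kappa'},gz_{x,\kappa,\kappa'})\le\diam A+\rho(\diam A)$ for every $g\in G$; since $\rho$ is nondecreasing and $\diam A\le s_{x,\kappa,\kappa'}$, the right-hand side is at most $s_{x,\kappa,\kappa'}+\rho(s_{x,\kappa,\kappa'})$, which is the asserted bound.

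There is no essential difficulty: the statement is the metric-graph instance of a single recurring template (compare Proposition \ref{p4.1.2} and its $\mathbb{R}$-tree and doubling counterparts). The only things that genuinely need care are bookkeeping — matching the parameters $\kappa/3$, $\kappa$, $(1-\kappa)/3$, $(\kappa-\kappa')/4$ and the numerical factors $2$, $4$, $\pi/\sqrt2$ demanded by Theorem \ref{t4.5.1} against exactly what Lemma \ref{l3.1} returns, together with the monotonicity of $\rho$ — and the slightly technical but standard fact that a bound on the partial diameter of $\nu_{G,x}$ upgrades to a genuine point of $Gx$ precisely because that orbit is compact.
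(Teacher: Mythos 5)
Your proposal is correct and follows exactly the route the paper intends: the paper offers no written proof beyond ``by virtue of Theorem \ref{t4.5.1}'', and the standard template (Lemma \ref{l3.1} to transfer separation distances from $G$ to $\nu_{G,x}$, Theorem \ref{t4.5.1} to bound $\diam(\nu_{G,x},1-\kappa)$, then Proposition \ref{p3.2} to extract a point of $Gx$) is precisely what you reconstruct. Your observation that $\kappa<1/2$ is implicitly required (so that $\nu_{G,x}(A)>1/2$), and your care about attainment of the partial-diameter infimum, are points the paper glosses over but do not change the argument.
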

\subsection{Case of Hadamard manifolds}
In this subsection, we consider the case where $X$ is a Hadamard
manifold $N$, i.e., a complete simply connected Riemannian manifold with
nonpositive sectional curvature. The following theorem was obtained in \cite[Theorem 1.3]{funano1}.
\begin{thm}Let $\{X_n\}_{n=1}^{\infty}$ be a L\'{e}vy family and $N$ a
 Hadamard manifold. Then, for any $\kappa >0$, we have
 \begin{align*}
  \lim_{n\to \infty}\obs_N (X_n;-\kappa)=0.
  \end{align*}
 \end{thm}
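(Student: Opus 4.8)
The plan is to deduce this theorem from the characterization of Lévy families via observable diameter (the definition and the corollary after Lemma \ref{vel2}), reducing the statement for a Hadamard manifold $N$ to a statement about $1$-Lipschitz maps into $N$. Recall that $\lim_{n\to\infty}\obs_N(X_n;-\kappa)=0$ for all $\kappa>0$ is equivalent to saying that for every sequence $f_n\colon X_n\to N$ of $1$-Lipschitz maps there are points $m_{f_n}\in N$ with $\mu_{X_n}(\{x_n\mid \dist_N(f_n(x_n),m_{f_n})\ge\varepsilon\})\to0$ for all $\varepsilon>0$. So I would fix such a sequence $f_n$ and produce the concentration points $m_{f_n}$.

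First I would exploit the convexity of the distance function on a Hadamard manifold: $N$ is a CAT($0$) space, so it has unique geodesics, barycenters exist, and the function $x\mapsto\dist_N(x,y)^2$ is strongly convex along geodesics. The standard device here (this is exactly Gromov's point of view, and is what makes \cite[Theorem 1.3]{funano1} work) is to use the $1$-Lipschitz functions $d_y:=\dist_N(\cdot,y)\colon N\to\mathbb{R}$ for $y\in N$ to transfer concentration in $N$ to concentration in $\mathbb{R}$. For each $n$, the composition $d_y\circ f_n\colon X_n\to\mathbb{R}$ is $1$-Lipschitz, so since $\{X_n\}$ is a Lévy family it concentrates near a median $m_{d_y\circ f_n}$ by Lemma \ref{vel1}. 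The goal is to choose $y=y_n$ (e.g. as an approximate barycenter, or as a point minimizing a suitable median functional) so that this real-valued concentration forces the $N$-valued maps $f_n$ to concentrate near $y_n$ itself; set $m_{f_n}:=y_n$.

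The key steps, in order, would be: (1) record that $N$ is a complete CAT($0$) geodesic space with convex metric, so push-forward measures $(f_n)_*\mu_{X_n}$ have well-defined barycenters $y_n\in N$; (2) for the $1$-Lipschitz function $d_{y_n}\circ f_n$ apply Lemma \ref{vel1} to get $\mu_{X_n}(\{x_n\mid |d_{y_n}(f_n(x_n))-m_n|\ge\varepsilon\})\le 2\alpha_{X_n}(\varepsilon)$ where $m_n$ is a median; (3) show the median $m_n\to0$, using that $y_n$ is (close to) the barycenter — this is where the CAT($0$) convexity inequality $\dist_N(p,b)^2\le\int\dist_N(p,q)^2\,d(f_n)_*\mu_{X_n}(q)-\int\dist_N(b,q)^2\,d(f_n)_*\mu_{X_n}(q)$ for the barycenter $b$ is used to bound the spread of $f_n$ around $y_n$ by the real-valued concentration, and to kill the median; (4) conclude $\mu_{X_n}(\{x_n\mid\dist_N(f_n(x_n),y_n)\ge\varepsilon\})\to0$ and invoke the equivalence to get $\obs_N(X_n;-\kappa)\to0$ for every $\kappa>0$.

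**The hard part will be** step (3): controlling the median of $d_{y_n}\circ f_n$ and showing the mass of $(f_n)_*\mu_{X_n}$ genuinely collapses to a point rather than, say, spreading along a sphere around $y_n$ at a fixed radius — a real-valued function $d_y\circ f_n$ alone cannot see such spreading. This is precisely why one needs the CAT($0$) geometry (strong convexity of squared distance, or equivalently the existence and contraction properties of barycenters) and not merely the Lipschitz transfer; the argument must combine concentration of $d_{y}\circ f_n$ for the \emph{right} center $y_n$ with the convexity inequality to pin the measure down. Since the statement is quoted from \cite[Theorem 1.3]{funano1}, I would organize the proof to isolate this CAT($0$) barycenter estimate as the single geometric input and otherwise run the same Lemma \ref{vel1}/Corollary-type machinery already set up in Section 2.
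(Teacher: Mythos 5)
First, note that the paper itself gives no proof of this statement: it is quoted verbatim from \cite[Theorem 1.3]{funano1}, so there is no internal argument to compare yours against. Judged on its own terms, your outline has a genuine gap exactly at the step you flag as hard, namely step (3). The obstruction you name --- that the push-forward measure could spread over a sphere of fixed radius about $y_n$ while $d_{y_n}\circ f_n$ is perfectly concentrated --- is not overcome by any of the tools you propose to use. CAT($0$) convexity of $\dist_N(\cdot,y)^2$, existence and contraction of barycenters, and the variance inequality $\dist_N(p,b)^2\leq\int\dist_N(p,q)^2\,d\nu(q)-\int\dist_N(b,q)^2\,d\nu(q)$ all hold verbatim in the infinite-dimensional Hilbert space $\ell^2$, yet the theorem is \emph{false} there: the round spheres $S^n$ form a L\'{e}vy family, the inclusions $S^n\hookrightarrow\ell^2$ are $1$-Lipschitz, and the push-forward measures are uniform on unit spheres of $(n+1)$-dimensional subspaces, whose partial diameters stay bounded away from $0$. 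So any proof running only on convexity and barycenters proves too much; finite-dimensionality of $N$ must enter, and your sketch never uses it.

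The standard repair (and, as far as one can tell from the surrounding material in Section 3.6, the route taken in \cite{funano1}) is to let $z_n$ be the center of mass of $(f_n)_{\ast}(\mu_{X_n})$ and compose with $\exp_{z_n}^{-1}:N\to\mathbb{R}^k$, which is $1$-Lipschitz by the hinge theorem (as used in Theorem \ref{t4.6.2.1}) and is an isometry on rays issuing from $z_n$, so that $|\exp_{z_n}^{-1}(y)|=\dist_N(z_n,y)$ and $\nu(B_N(z_n,r))$ can be read off downstairs. One then invokes the finite-dimensional Euclidean estimate of Proposition \ref{p4.1.1} (coordinate projections, with the unavoidable $\sqrt{k}$ factor) together with the fact that $c((\exp_{z_n}^{-1})_{\ast}(\nu))=0$ from Proposition \ref{p4.6.1.2} to pin the concentration point at the origin, i.e.\ at $z_n$. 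Your single real-valued test function $d_{y_n}$ cannot substitute for the $k$ coordinate functions; that is precisely where the dimension, and hence the truth of the theorem, lives.
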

   \subsubsection{Central radius}

Let $N$ be a Hadamard manifold. For
a finite Borel measure on $N$ with compact support, we indicate the center of
mass of the measure $\nu$ by $c(\nu)$. Given any $\kappa>0$, putting
$m:= \nu(N)$, we define the \emph{central radius} $\crad(\nu,m-\kappa)$
of $\nu$ as the infimum of $\rho>0$ such that $\nu(B_N(c(\nu),\rho))\geq
m-\kappa$.

  \begin{prop}[{cf.~\cite[Proposition 5.4]{sturm}}]\label{p4.6.1.1}For a finite Borel measure $\nu$ on
   $\mathbb{R}^k$ with the compact support, we have
\begin{align*}
 c(\nu)=\frac{1}{\nu(\mathbb{R}^k)}\int_{\mathbb{R}^k}x d\nu(x).
 \end{align*}
   \end{prop}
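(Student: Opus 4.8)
The plan is to unwind the definition of the center of mass and reduce the claim to an elementary minimization of a quadratic function. Recall that on a Hadamard manifold $N$ the center of mass $c(\nu)$ of a compactly supported finite Borel measure $\nu$ with $m:=\nu(N)>0$ is defined as the unique minimizer of the energy functional $P_{\nu}(z):=\int_N \dist_N(z,x)^2\,d\nu(x)$, $z\in N$; existence and uniqueness of this minimizer rest on the fact that $z\mapsto \dist_N(z,x)^2$ is strictly convex along geodesics on a Hadamard manifold, together with $P_\nu$ being proper (which holds since $\supp\nu$ is compact). So it will suffice to check that, for $N=\mathbb{R}^k$, the point $b:=\tfrac1m\int_{\mathbb{R}^k}x\,d\nu(x)$ is the minimizer of $P_\nu$.

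First I would write $P_\nu$ out explicitly. On $\mathbb{R}^k$ we have $\dist_{\mathbb{R}^k}(z,x)=|z-x|$, and since $\supp\nu$ is compact every integral below converges; expanding the square gives
\[
 P_\nu(z)=\int_{\mathbb{R}^k}\bigl(|z|^2-2\langle z,x\rangle+|x|^2\bigr)\,d\nu(x)=m|z|^2-2\Bigl\langle z,\int_{\mathbb{R}^k}x\,d\nu(x)\Bigr\rangle+\int_{\mathbb{R}^k}|x|^2\,d\nu(x).
\]
Completing the square with $b:=\tfrac1m\int_{\mathbb{R}^k}x\,d\nu(x)$ yields $P_\nu(z)=m\,|z-b|^2+\bigl(\int_{\mathbb{R}^k}|x|^2\,d\nu(x)-m|b|^2\bigr)$, where the parenthesized term is independent of $z$. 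Since $m>0$, the right-hand side is minimized exactly at $z=b$, and by uniqueness of the minimizer we conclude $c(\nu)=b$, which is the assertion.

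I do not expect a genuine obstacle: the only points needing care are (i) recording precisely that the center of mass is, by definition, the minimizer of the energy functional and that on a Hadamard manifold this minimizer exists and is unique (a fact quoted from the literature, e.g.\ the cited work of Sturm), and (ii) the routine justification that all the integrals converge, which is immediate from compactness of $\supp\nu$. The computation itself is simply the classical statement that in Euclidean space the Fréchet mean coincides with the ordinary barycenter.
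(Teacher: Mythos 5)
Your argument is correct: with $m:=\nu(\mathbb{R}^k)>0$, completing the square gives $P_\nu(z)=m|z-b|^2+\const$, so the unique minimizer of the energy functional is the barycenter $b$. The paper itself offers no proof of this proposition --- it is quoted directly from Sturm --- so there is nothing to compare against; your completing-the-square computation is the standard verification and is consistent with the paper's setup (note also that the paper's Proposition~\ref{p4.6.1.2}, the first-order condition $\int\exp_x^{-1}(y)\,d\nu(y)=0$, reduces in $\mathbb{R}^k$ to $\int(y-x)\,d\nu(y)=0$ and yields the same conclusion in one line).
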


   \begin{prop}[{cf.~\cite[Proposition 5.10]{sturm}}]\label{p4.6.1.2}Let $N$ be a
    Hadamard manifold and $nu$ a finite Borel measure on $N$ with the
    compact support. Then, $x=c(\nu)$ if and only if
    \begin{align*}
     \int_N \exp_x^{-1}(y)d\nu(y)=0.
     \end{align*}In particular, identifying the tangent space of $N$ at
    the point $c(\nu)$ with the Euclidean space of the same dimension of
    $N$, we have $c((\exp_{c(\nu)}^{-1})_{\ast}(\nu))=0$.
   \end{prop}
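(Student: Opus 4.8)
The plan is to exploit the variational definition of the center of mass. Recall that for a compactly supported finite Borel measure $\nu$ on a Hadamard manifold $N$, the center of mass $c(\nu)$ is the minimizer of the energy functional
\[
P_{\nu}(x):=\int_N \dist_N(x,y)^2\, d\nu(y).
\]
First I would note that this minimizer exists and is unique: $P_{\nu}$ is continuous and proper (for fixed $y_0\in\supp\nu$ one has $\dist_N(x,y)^2\geq(\dist_N(x,y_0)-\diam(\supp\nu))^2$ for $y\in\supp\nu$, so $P_\nu(x)\to+\infty$ as $x$ leaves every compact set), and since $N$ is complete, simply connected, and nonpositively curved, $x\mapsto\dist_N(x,y)^2$ is strictly convex along geodesics, hence so is $P_{\nu}$. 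The key analytic input I would record is the first-variation formula: a Hadamard manifold has empty cut locus, so $x\mapsto\tfrac12\dist_N(x,y)^2$ is smooth and $\grad_x\big(\tfrac12\dist_N(x,y)^2\big)=-\exp_x^{-1}(y)$.

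Next I would differentiate $P_\nu$ under the integral sign. Because $\supp\nu$ is compact, for $x$ in any fixed bounded neighbourhood the integrand and its $x$-gradient $-\exp_x^{-1}(y)$ are bounded uniformly in $y\in\supp\nu$, which justifies the interchange and gives
\[
\grad P_{\nu}(x)=-2\int_N \exp_x^{-1}(y)\,d\nu(y).
\]
The equivalence then drops out: if $x=c(\nu)$, then $x$ minimizes the smooth function $P_\nu$, so $\grad P_\nu(x)=0$ and the integral vanishes; conversely, a point where the integral vanishes is a critical point of the convex function $P_\nu$, hence its unique global minimizer, i.e.\ $c(\nu)$.

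For the final assertion I would fix a linear isometry $T_{c(\nu)}N\cong\mathbb{R}^k$, $k=\dim N$, and set $\mu:=(\exp_{c(\nu)}^{-1})_\ast(\nu)$, a finite Borel measure on $\mathbb{R}^k$ with compact support (as $\exp_{c(\nu)}^{-1}$ is continuous and $\supp\nu$ is compact). By Proposition \ref{p4.6.1.1},
\[
c(\mu)=\frac{1}{\mu(\mathbb{R}^k)}\int_{\mathbb{R}^k}v\,d\mu(v)=\frac{1}{\nu(N)}\int_N\exp_{c(\nu)}^{-1}(y)\,d\nu(y),
\]
which vanishes by the equivalence just proved, applied with $x=c(\nu)$.

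The hard part will be the differentiation-under-the-integral step together with the smoothness and gradient formula for the squared distance function; both rely essentially on the Hadamard hypothesis, which guarantees the absence of a cut locus (so that $\exp_x^{-1}$ is globally defined and smooth) and the geodesic convexity that turns critical points of $P_\nu$ into global minima. The remaining manipulations are routine.
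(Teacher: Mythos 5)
Your argument is correct and is the standard one: the paper offers no proof of this proposition, citing it to Sturm's Proposition 5.10, and your variational derivation (center of mass as the unique minimizer of the strictly convex, proper functional $\int_N \dist_N(x,y)^2\,d\nu(y)$, the gradient formula $\grad_x\bigl(\tfrac12 \dist_N(x,y)^2\bigr)=-\exp_x^{-1}(y)$ valid on a Hadamard manifold, and differentiation under the integral justified by compact support) is exactly the argument behind the cited result in the smooth setting. The deduction of the final assertion from Proposition \ref{p4.6.1.1} via the change of variables for the push-forward measure is also correct.
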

Proposition \ref{p3.1} directly implies the following corollary:
          \begin{cor}\label{c4.6.1.1}Assume that a compact metric group acts on a
           Hadamard manifold $N$ and put $r_x:=\lim_{\kappa \uparrow
           1/2}\crad(\nu_{G,x},1-\kappa)$ for $x\in X$. Then, we have
        \begin{align*}
         \dist_X(c(\nu_{G,x}), gc(\nu_{G,x}))\leq r_x+ \rho(+r_x)
         \end{align*}for any $g\in G$. Moreover, there exists
        a point $z_{x}\in Gx$ such that
        \begin{align*}
         \dist_X(z_{x},gz_{x}) \leq \ &\min \{
         2r_x+\rho(+2r_x), 
         2r_x+2\rho
         (+r_x)
         \} 
         \end{align*}for any $g\in G$.
        \end{cor}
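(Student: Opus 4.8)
The plan is to apply Proposition~\ref{p3.1} with the base point $y$ chosen to be the center of mass $c:=c(\nu_{G,x})$ and with radii decreasing to the relevant central radii, and then pass to the limit. Note first that $c$ is well defined, since $Gx$ is compact and hence $\nu_{G,x}=(f_x)_\ast(\mu_G)$ is a compactly supported probability measure on the Hadamard manifold $N$.

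The key preliminary fact concerns the central radius. The map $\kappa\mapsto\crad(\nu_{G,x},1-\kappa)$ is non-increasing on $(0,1/2)$, so $r_x=\inf_{0<\kappa<1/2}\crad(\nu_{G,x},1-\kappa)$; consequently, for every $\delta>r_x$ there is some $\kappa\in(0,1/2)$ with $\crad(\nu_{G,x},1-\kappa)<\delta$, and by the definition of the central radius this forces $\nu_{G,x}(B_N(c,\delta))\geq 1-\kappa>1/2$. Applying the first part of Proposition~\ref{p3.1} with $y=c$ and this $\delta$ gives $\dist_X(c,gc)\leq\delta+\rho(\delta)$ for all $g\in G$; since $\rho$ is monotone non-decreasing, letting $\delta\downarrow r_x$ yields $\dist_X(c,gc)\leq r_x+\rho(+r_x)$, the first inequality.

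For the last assertion I would exhibit $z_x$ directly. As $\delta\downarrow r_x$ through values $>r_x$, the balls $B_N(c,\delta)$ decrease to $\{y\in N:\dist_X(c,y)\leq r_x\}$, so continuity of $\nu_{G,x}$ from above together with the bound of the previous paragraph gives $\nu_{G,x}(\{y\in N:\dist_X(c,y)\leq r_x\})\geq 1/2>0$; since $\nu_{G,x}$ is concentrated on $Gx$, there is a point $z_x\in Gx$ with $\dist_X(c,z_x)\leq r_x$. For $\delta>r_x$ one has $B_N(c,\delta)\subseteq B_N(z_x,2\delta)$, hence $\nu_{G,x}(B_N(z_x,2\delta))>1/2$, and inequality~(\ref{s3.1}) with base point $z_x$ and radius $2\delta$ gives $\dist_X(z_x,gz_x)\leq 2\delta+\rho(2\delta)$, so $\dist_X(z_x,gz_x)\leq 2r_x+\rho(+2r_x)$ on letting $\delta\downarrow r_x$. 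On the other hand, the triangle inequality combined with the first inequality and $\dist_X(c,z_x)\leq r_x$ gives $\dist_X(z_x,gz_x)\leq\dist_X(z_x,c)+\dist_X(c,gc)+\dist_X(gc,gz_x)\leq r_x+(r_x+\rho(+r_x))+\rho(r_x)\leq 2r_x+2\rho(+r_x)$. Taking the minimum of the two bounds finishes the proof.

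I expect the only delicate point to be this last step, namely producing one point $z_x\in Gx$ that serves for all $g$ at once rather than a $\delta$-dependent family; the construction above does this, but an equally valid alternative is to fix a sequence $\delta_n\downarrow r_x$, take the points $x_0(\delta_n)\in Gx$ supplied by Proposition~\ref{p3.1}, extract a convergent subsequence using compactness of $Gx$, and pass to the limit in the bound $\min\{2\delta_n+\rho(2\delta_n),\,2\delta_n+2\rho(\delta_n)\}$ using continuity of the action and of $\dist_X$. Either way, only the already-established results are used.
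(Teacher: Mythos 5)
Your argument is correct and is essentially the paper's intended proof: the paper derives this corollary directly from Proposition~\ref{p3.1} by taking $y=c(\nu_{G,x})$ and $\delta\downarrow r_x$ (noting $\nu_{G,x}(B_N(c(\nu_{G,x}),\delta))>1/2$ for every $\delta>r_x$), exactly as you do, and your construction of a single $z_x\in Gx$ with $\dist_X(c(\nu_{G,x}),z_x)\leq r_x$ mirrors the choice of $x_0\in B_X(y,\delta)\cap Gx$ in the proof of (\ref{s3.2}). Your attention to producing one $z_x$ valid in the limit, rather than a $\delta$-dependent family, correctly fills in the only detail the paper leaves implicit.
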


        \subsubsection{H\"{o}lder actions}

        In this subsubsection, we consider a H\"{o}lder action of a
        compact Lie group to a Hadamard manifold.

        Let a compact Lie group $G$ acts on a Hadamard manifold $N$. We
        shall consider the case where $\omega_x(\eta)\leq C_1
        \eta^{\alpha}$ holds for some $x\in N$ and $C_1,\alpha>0$. 

        Combining Gromov's observation in \cite[Section
        13]{gromovcat} with one in \cite[Section
        $3\frac{1}{2}.41$]{gromov}, we obtain the following theorem:
        \begin{thm}\label{t4.6.2.1}Let $M$ be a compact Riemannian
         manifold and $N$ be a Hadamard manifold. Assume that a
         continuous map $f:M\to N$ satisfies that
         \begin{align*}
          \dist_N(f(x),f(y))\leq C_1 \dist_M (x,y)^{\alpha}
          \end{align*}for some $C_1>0$, $\alpha> 1$, and all $x,y\in
         M$. Then, the map $f:M\to N$ is a constant map.
         \begin{proof}Put $\mathbb{E}(f):=c(f_{\ast}(\mu_M))$. We shall
          prove that $\supp f_{\ast}(\mu_X)= \{ \mathbb{E}(f)\}$, which
          implies the theorem. Suppose that $\supp f_{\ast}(\mu_X)
          \neq \{ \mathbb{E}(f)\}$. We identify the tangent space of $N$
          at $\mathbb{E}(f)$ with the Euclidean space $\mathbb{R}^k$,
          where $k$ is the dimension of $N$. According to the hinge
          theorem (see \cite[Chapter \Roman{yon}, Remark 2.6]{sakai}), the map $\exp_{\mathbb{E}(f)}^{-1}:N\to \mathbb{R}^k$
          is $1$-Lipschitz. Since the map $\exp^{-1}_{\mathbb{E}(f)}$ is
          isometric on rays issuing from $\mathbb{E}(f)$ and $\supp
          f_{\ast}(\mu_M)\neq \{ \mathbb{E}(f)\}$, we have
          \begin{align*}
           \int_M |(\exp_{\mathbb{E} (f)}^{-1} \circ f)(x)|^2 d\mu_M(x)=
           \int_M \dist_N(f(x),\mathbb{E}(f))^2 d\mu_M(x)>0.
           \end{align*}Denoting by $((\exp_{\mathbb{E} (f)}^{-1} \circ
          f)(x))_i$ the $i$-th component of $(\exp_{\mathbb{E} (f)}^{-1}
          \circ f)(x)$, we hence see that there exists $i_0$ such that
          \begin{align*}
           \int_M |((\exp_{\mathbb{E} (f)}^{-1} \circ f)(x))_{i_0}|^2d\mu_M(x)>0.
           \end{align*}Putting $\varphi:= (\exp_{\mathbb{E} (f)}^{-1}
          \circ f)_{i_0}$, we observe that
          \begin{align*}
           \| \grad_x \varphi \|=\limsup_{y \to
           x}\frac{|\varphi(y)-\varphi (x)|}{\dist_M(y,x)}\leq
           \limsup_{y\to x} \frac{C_1\dist_{M}(y,x)^{\alpha}}{\dist_M(y,x)}= 0
           \end{align*}and the function $\varphi$ has mean zero by
          Proposition \ref{p4.6.1.2}. We therefore obtain
          \begin{align*}
           0<\lambda_1(M)= \inf \frac{\int_M \| \grad_x g \|^2 d\mu_M
           (x)}{\int_M g(x)^2 d\mu_M(x)} \leq \frac{\int_M \| \grad_x
           \varphi\|^2 d\mu_M(x)}{\int_M \varphi(x)^2 d\mu_M(x)}=0,
           \end{align*}where the infimum is taken over all nontrivial Lipschitz
          maps $g:M\to \mathbb{R}$ with mean zero. This is a
          contradiction. This completes the proof.
          \end{proof}
         \end{thm}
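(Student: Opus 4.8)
The plan is to reduce the statement to the positivity of the first nonzero eigenvalue $\lambda_1(M)$ of the Laplacian on the compact manifold $M$, by means of the center of mass construction in $N$. Since $M$ is compact and $f$ is continuous, $f(M)$ is compact, so $\nu:=f_{\ast}(\mu_M)$ is a finite Borel measure with compact support on the Hadamard manifold $N$, and its center of mass $\mathbb{E}(f):=c(\nu)$ is well defined. Identify $T_{\mathbb{E}(f)}N$ with $\mathbb{R}^k$, $k=\dim N$. The first ingredient I would use is that $\exp_{\mathbb{E}(f)}^{-1}:N\to \mathbb{R}^k$ is $1$-Lipschitz; this is where the Hadamard hypothesis enters, via Cartan--Hadamard together with the hinge (Rauch) comparison, nonpositive sectional curvature being exactly what makes distance functions from a point $1$-Lipschitz in the radial chart. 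Consequently the composition $g:=\exp_{\mathbb{E}(f)}^{-1}\circ f:M\to \mathbb{R}^k$ still satisfies $|g(x)-g(y)|\leq C_1\dist_M(x,y)^{\alpha}$ with $\alpha>1$.

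Next I would examine each coordinate function $\varphi_i$ of $g$. The H\"older estimate with exponent $\alpha>1$ makes the local slope vanish identically: $\|\grad_x\varphi_i\|=\limsup_{y\to x}|\varphi_i(y)-\varphi_i(x)|/\dist_M(y,x)\leq \limsup_{y\to x}C_1\dist_M(y,x)^{\alpha-1}=0$ for every $x\in M$. On the other hand, Proposition \ref{p4.6.1.2} applied to $\nu=f_{\ast}(\mu_M)$ gives $c(g_{\ast}(\mu_M))=0$, so each $\varphi_i$ has $\mu_M$-mean zero. Plugging $\varphi_i$ into the Rayleigh-quotient characterization $0<\lambda_1(M)\leq \int_M\|\grad_x h\|^2\,d\mu_M(x)/\int_M h(x)^2\,d\mu_M(x)$, where the infimum runs over nonconstant Lipschitz functions $h$ with mean zero, forces $\int_M\varphi_i(x)^2\,d\mu_M(x)=0$, that is, $\varphi_i\equiv 0$. (Alternatively, $\|\grad\varphi_i\|\equiv 0$ already makes $\varphi_i$ locally constant, hence constant as $M$ is connected, hence $0$ by the mean-zero property; I would keep the $\lambda_1$ route since it is the robust one.)

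Therefore $g\equiv 0$, so $f\equiv \mathbb{E}(f)$ on $\supp\mu_M=M$ by continuity, and $f$ is a constant map. The step I expect to be the crux is the $1$-Lipschitz property of the chart $\exp_{\mathbb{E}(f)}^{-1}$, which is the one place the nonpositive curvature of $N$ is indispensable; the remainder is routine once one notices that the exponent $\alpha>1$ gets used twice — first to transport the H\"older bound through the $1$-Lipschitz chart, and then to annihilate the local slope of each $\varphi_i$ — and that compactness of $M$ is needed both to make $\nu$ compactly supported (so that $c(\nu)$ exists) and to guarantee $\lambda_1(M)>0$.
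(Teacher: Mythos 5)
Your argument is correct and follows essentially the same route as the paper's proof: the center of mass $c(f_{\ast}(\mu_M))$, the $1$-Lipschitz chart $\exp_{\mathbb{E}(f)}^{-1}$ via the hinge theorem, the vanishing of $\|\grad\varphi_i\|$ from the H\"older exponent $\alpha>1$, the mean-zero property from Proposition \ref{p4.6.1.2}, and the Rayleigh quotient for $\lambda_1(M)>0$. The only difference is cosmetic -- you run the $\lambda_1$ argument directly on every coordinate to conclude $\varphi_i\equiv 0$, whereas the paper argues by contradiction with a single coordinate $i_0$ of nonzero $L^2$-norm.
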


    \begin{cor}\label{c4.6.2.1}Assume that a compact Lie group $G$ continuously acts on
     a Hadamard manifold $N$. We also assume that there exists a point
     $x\in X$ such that the condition $\omega_x(\eta)\leq C_1
     \eta^{\alpha}$ holds for some $\alpha>1$. Then, the point $x$ is a
     fixed point.
     \end{cor}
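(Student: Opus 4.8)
The plan is to deduce Corollary \ref{c4.6.2.1} directly from Theorem \ref{t4.6.2.1} by applying the latter to a suitable family of orbit maps. First I would fix the point $x\in N$ for which $\omega_x(\eta)\le C_1\eta^{\alpha}$ with $\alpha>1$. Recall that for a compact Lie group $G$ acting continuously on $N$, the orbit map $f_x\colon G\to N$, $g\mapsto gx$, satisfies $\dist_N(f_x(g),f_x(g'))\le \omega_x(\dist_G(g,g'))\le C_1\dist_G(g,g')^{\alpha}$ by the very definition of $\omega_x$. Thus $f_x$ is a continuous map from the compact Riemannian manifold $G$ (equipped with a bi-invariant metric, say) to the Hadamard manifold $N$ satisfying exactly the H\"older hypothesis of Theorem \ref{t4.6.2.1}. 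Hence Theorem \ref{t4.6.2.1} applies and $f_x$ is a constant map, i.e.\ $gx=x$ for all $g\in G$, so $x$ is a fixed point.

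The one technical point to address is that Theorem \ref{t4.6.2.1} is stated for a \emph{compact Riemannian manifold} $M$, and one needs $G$, viewed with a bi-invariant Riemannian metric, to be such a manifold with $\lambda_1(G)>0$; this holds because $G$ is a compact Lie group (hence a closed Riemannian manifold once endowed with a bi-invariant metric), and any closed Riemannian manifold has $\lambda_1>0$. One should also note that the metric $\dist_G$ used to define $\omega_x$ can be taken to be this bi-invariant Riemannian distance (or comparable to it), so that the H\"older bound $\omega_x(\eta)\le C_1\eta^{\alpha}$ transfers to $\dist_N(f_x(g),f_x(g'))\le C_1' \dist_G(g,g')^{\alpha}$ with the exponent $\alpha>1$ unchanged and some adjusted constant $C_1'$.

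The main (and essentially only) obstacle is therefore this matching of hypotheses: one must make sure the ambient metric on $G$ entering the definition of $\omega_x$ is (bi-Lipschitz equivalent to) a Riemannian one, so that the spectral-gap argument inside the proof of Theorem \ref{t4.6.2.1} is available. Once that is granted, the conclusion is immediate: the image $f_x(G)=Gx$ is a single point $\{x\}$, which is precisely the assertion that $x$ is a fixed point of the $G$-action. I do not expect any further complications, since all the analytic work (the $\|\grad\varphi\|=0$ computation and the Poincar\'e inequality contradiction) is already carried out in Theorem \ref{t4.6.2.1}.
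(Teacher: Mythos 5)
Your proposal is correct and is exactly the argument the paper intends: the orbit map $f_x\colon G\to N$ inherits the H\"older bound $\dist_N(gx,g'x)\le\omega_x(\dist_G(g,g'))\le C_1\dist_G(g,g')^{\alpha}$ directly from the definition of $\omega_x$, and Theorem \ref{t4.6.2.1} applied to $G$ (a closed Riemannian manifold under a bi-invariant metric) forces $f_x$ to be constant, i.e.\ $Gx=\{x\}$. Your remark about matching $\dist_G$ with a Riemannian distance is a sensible precaution that the paper leaves implicit, and it does not change the substance of the argument.
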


     Assume that a compact metric group $G$ contnuously acts on a Hadamard
     manifold $N$. In view of Corollary \ref{c4.6.2.1}, we shall
     consider the case of $0<\alpha\leq 1$.

        We assume that a compact metric group $G$ satisfies that
        \begin{align}\label{s4.6.2.1}
        \alpha_G(r)\leq C_2 e^{-C_3 r^{\beta}} \text{ for some }C_2,C_3, \beta>0.
         \end{align}See Examples \ref{exl1} and \ref{exl2} for examples.

         Let a compact metric group continuously acts
          on a metric space $X$. For any $r>0$ and $x\in X$, we define
          $\omega_x^{-1}(r)$ as the infimum of $\dist_G(g,g')$, where
          $g$ and $g'$ run over all elements in $G$ such that
          $\dist_X(gx,g'x)\geq r$. 
         \begin{lem}\label{l4.6.2.1}Assume that a compact metric group continuously acts
          on a metric space $X$. Then, for any $x\in X$, we have
          \begin{align*}
           \alpha_{(X,\nu_{G,x})}(r)\leq \alpha_G(\omega_x^{-1}(r)).
           \end{align*}
          \begin{proof}Let $A\subseteq X$ be any Borel subset such that
           $\nu_{G,x}(A)\geq 1/2$. From the difinition of $\omega_x^{-1}(r)$, we get
          \begin{align*}
           \{ g\in G \mid gx\in A\}_{+\omega_x^{-1}(r)}\subseteq \{ g\in
           G \mid gx \in A_{+r}\}.
           \end{align*}Since $\mu_G(\{ g\in G \mid gx\in A\})\geq 1/2$, we hence obtain
           \begin{align*}
            \nu_{G,x}(X\setminus A_{+r})\leq \mu_G(G\setminus \{ g\in G
            \mid gx \in A\}_{+\omega_x^{-1}(r)})\leq \alpha_G(\omega_x^{-1}(r)).
            \end{align*}This completes the proof.
           \end{proof}
          \end{lem}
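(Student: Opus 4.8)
The plan is to transport everything through the orbit map $f_x\colon G\to X$, $g\mapsto gx$, which by definition pushes the normalized Haar measure $\mu_G$ forward to $\nu_{G,x}$. Fix $r>0$ and let $A\subseteq X$ be an arbitrary Borel set with $\nu_{G,x}(A)\geq 1/2$. I would set $\widetilde{A}:=f_x^{-1}(A)=\{g\in G\mid gx\in A\}$; this is Borel since the action is continuous, and $\mu_G(\widetilde{A})=\nu_{G,x}(A)\geq 1/2$.

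The one substantive step is the set inclusion
\begin{align*}
\widetilde{A}_{+\omega_x^{-1}(r)}\subseteq f_x^{-1}(A_{+r}),
\end{align*}
which I would read straight off the definition of $\omega_x^{-1}$. Indeed, if $g\in\widetilde{A}_{+\omega_x^{-1}(r)}$, choose $g'\in\widetilde{A}$ with $\dist_G(g,g')<\omega_x^{-1}(r)$. The defining property of $\omega_x^{-1}(r)$ says that whenever $\dist_X(hx,h'x)\geq r$ one has $\dist_G(h,h')\geq\omega_x^{-1}(r)$; taking the contrapositive for the pair $g,g'$ gives $\dist_X(gx,g'x)<r$. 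Since $g'x\in A$, this means $gx\in A_{+r}$, i.e.\ $g\in f_x^{-1}(A_{+r})$, proving the inclusion.

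Passing to complements and then pushing the measure back to $G$, the inclusion yields
\begin{align*}
\nu_{G,x}(X\setminus A_{+r})=\mu_G\big(G\setminus f_x^{-1}(A_{+r})\big)\leq\mu_G\big(G\setminus\widetilde{A}_{+\omega_x^{-1}(r)}\big)\leq\alpha_G(\omega_x^{-1}(r)),
\end{align*}
where the last inequality is the definition of the concentration function $\alpha_G$ applied to $\widetilde{A}$, whose $\mu_G$-measure is at least $1/2$. Since $A$ was an arbitrary Borel set of $\nu_{G,x}$-measure $\geq 1/2$, taking the supremum over all such $A$ gives $\alpha_{(X,\nu_{G,x})}(r)\leq\alpha_G(\omega_x^{-1}(r))$.

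I do not expect a real obstacle; the only thing needing a word of care is that $\omega_x^{-1}(r)>0$, so that $\alpha_G$ is evaluated inside its domain $(0,+\infty)$. This follows from uniform continuity of the orbit map $f_x$ on the compact group $G$: for each $r>0$ there is $\delta>0$ with $\dist_G(g,g')<\delta\Rightarrow\dist_X(gx,g'x)<r$, hence $\omega_x^{-1}(r)\geq\delta>0$. One should also keep the open-neighbourhood convention for $A_{+r}$ and $\widetilde{A}_{+s}$ consistent with the definition of $\alpha_X$, which is exactly what makes the strict inequalities above line up.
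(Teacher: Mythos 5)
Your proof is correct and follows essentially the same route as the paper: pull $A$ back through the orbit map, use the definition of $\omega_x^{-1}(r)$ to get the inclusion $\{g\mid gx\in A\}_{+\omega_x^{-1}(r)}\subseteq\{g\mid gx\in A_{+r}\}$, and apply the concentration function of $G$. The extra details you supply (the contrapositive argument for the inclusion and the positivity of $\omega_x^{-1}(r)$) are sound elaborations of steps the paper leaves implicit.
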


         \begin{lem}\label{l4.6.2.2}Let a compact metric group $G$ continuously
          acts on a metric space $X$. Assume that a point $x\in X$
          satisfies the following H\"{o}lder condition:
\begin{align}\label{s4.6.2.2}
 \omega_x (\eta)\leq C_1 \eta^{\alpha} \text{ holds for some }C_1>0 \text{ and }0< \alpha\leq 1.
 \end{align}We also assume that the group $G$ satisfies the condition
          (\ref{s4.6.2.1}). Then, we have
\begin{align*}
 \alpha_{(N,\nu_{G,x})}(r)\leq C_2 e^{-C_1^{-\beta/\alpha} C_3
          r^{\beta /\alpha}}.
 \end{align*}
          \begin{proof}By the assumption (\ref{s4.6.2.2}), $\dist_{X}(gx,g'x)>
           C_1s^{\alpha}$ implies that $\dist_G(g,g')>s$, that is,
           $\dist_X(gx,g'x)\geq r$ yields that $\dist_G(g,g')\geq
           (r/C_1)^{1/\alpha}$. We hence get $\omega_x^{-1}(r)\geq
           (r/C_1)^{1/\alpha}$. 
           By using this and Lemma \ref{l4.6.2.1}, we obtain
           \begin{align*}
            \alpha_{(X,\nu_{G,x})}(r)\leq \alpha_G(\omega_x^{-1}(r))\leq
            \alpha_G ((r/C_1 )^{1/\alpha})\leq C_2
            e^{-C_1^{-\beta/\alpha}C_3 r^{\beta /\alpha}}.
            \end{align*}This completes the proof.
           \end{proof}
          \end{lem}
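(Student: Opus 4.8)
The plan is to read off the estimate from Lemma \ref{l4.6.2.1}, which already controls the concentration function of $(X,\nu_{G,x})$ by that of $G$ through the auxiliary modulus $\omega_x^{-1}$: once that inequality is in hand, the only remaining task is to insert the H\"{o}lder hypothesis (\ref{s4.6.2.2}) and the sub-Gaussian-type bound (\ref{s4.6.2.1}) and keep track of the exponents.

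First I would bound $\omega_x^{-1}(r)$ from below. By definition $\omega_x^{-1}(r)$ is the infimum of $\dist_G(g,g')$ over all $g,g'\in G$ with $\dist_X(gx,g'x)\geq r$, and by the very definition of $\omega_x$ one has $\dist_X(gx,g'x)\leq \omega_x(\dist_G(g,g'))$ for every pair $g,g'$. Applying (\ref{s4.6.2.2}) with $\eta=\dist_G(g,g')$ therefore gives $\dist_X(gx,g'x)\leq C_1\dist_G(g,g')^{\alpha}$; hence $\dist_X(gx,g'x)\geq r$ forces $\dist_G(g,g')\geq (r/C_1)^{1/\alpha}$, and taking the infimum over such pairs yields
\begin{align*}
 \omega_x^{-1}(r)\geq \Big(\frac{r}{C_1}\Big)^{1/\alpha}.
\end{align*}

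Next, since the concentration function $\alpha_G$ is nonincreasing, substituting this into Lemma \ref{l4.6.2.1} gives
\begin{align*}
 \alpha_{(X,\nu_{G,x})}(r)\leq \alpha_G\big(\omega_x^{-1}(r)\big)\leq \alpha_G\Big(\big(r/C_1\big)^{1/\alpha}\Big),
\end{align*}
and then the hypothesis (\ref{s4.6.2.1}), together with $\big((r/C_1)^{1/\alpha}\big)^{\beta}=C_1^{-\beta/\alpha}r^{\beta/\alpha}$, produces exactly the claimed bound $\alpha_{(X,\nu_{G,x})}(r)\leq C_2 e^{-C_1^{-\beta/\alpha}C_3 r^{\beta/\alpha}}$.

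There is no genuine obstacle: the argument is a short chain of inequalities resting entirely on Lemma \ref{l4.6.2.1}. The points that deserve a moment's care are the monotonicity of $\alpha_G$ (so that the lower bound $\omega_x^{-1}(r)\geq (r/C_1)^{1/\alpha}$ translates into an upper bound for $\alpha_G(\omega_x^{-1}(r))$) and the exponent arithmetic giving $\beta/\alpha$ and the constant $C_1^{-\beta/\alpha}$. I would also remark that, although the statement writes the ambient space as $N$ (the Hadamard manifold of this subsection), the proof never uses more than that $X$ is a metric space carrying a continuous $G$-action, so it holds verbatim in that generality.
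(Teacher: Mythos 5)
Your proposal is correct and follows exactly the paper's own argument: lower-bound $\omega_x^{-1}(r)$ by $(r/C_1)^{1/\alpha}$ from the H\"{o}lder condition, then chain through Lemma \ref{l4.6.2.1}, the monotonicity of $\alpha_G$, and the hypothesis (\ref{s4.6.2.1}). Your closing remark that the statement's ``$N$'' should just be the metric space $X$ is also consistent with how the paper actually proves and uses the lemma.
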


We denote by $\gamma_k$ the standard Gaussian measure on $\mathbb{R}^k$
with density $(2\pi)^{-k/2}e^{-|x|^2 /2}$. For any $p\geq 0$, we put
\begin{align*}
 M_p:=\int_{\mathbb{R}}|s|^p d\gamma_1(s)=2^{p/2}\pi^{-1/2}\Gamma \Big(\frac{p+1}{2}\Big).
 \end{align*}The same proof of \cite[Theorem 1]{ledole} implies the
 following theorem:
\begin{thm}[{cf.~\cite[Theorem 1]{ledole}}]\label{t4.6.2.2}Assume that an mm-space $X$
 satisfies that $\alpha_X(r)\leq C_1e^{-C_2 r^p}$ for some $C_1,C_2>0$
 and some $p\geq 1$. Then, for any $1$-Lipschitz function $f:X\to
 \mathbb{R}^k$ with mean zero, we have
 \begin{align*}
  \int_X|f(x)|^p d\mu_X(x)\leq \frac{C}{C_2 M_p}\int_{\mathbb{R}^k}|y|^p
  d\gamma_k(y) =  \frac{C}{C_2 M_p}\cdot
  \frac{2^{p/2}\Gamma(\frac{p+k}{2})}{\Gamma (\frac{k}{2})} \approx 
\frac{Ck^{p/2}}{C_2},
  \end{align*}where $C$ is a constant depending only on $p$ and $C_1$. 
 \end{thm}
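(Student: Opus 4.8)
The plan is to follow the proof of \cite[Theorem 1]{ledole} verbatim, the only point to check being that the hypothesis $\alpha_X(r) \le C_1 e^{-C_2 r^p}$ with $p \ge 1$ is used in exactly the same way. First I would reduce to the one-dimensional estimate: for a $1$-Lipschitz function $f \colon X \to \mathbb{R}^k$ with mean zero, writing $f = (f_1, \dots, f_k)$, each coordinate $f_i$ is $1$-Lipschitz but need not have mean zero; instead one works with $|f|$, which is $1$-Lipschitz as a composition of $f$ with the (non-expanding) norm. The concentration hypothesis gives, via Lemma \ref{vel1}, the tail bound $\mu_X(\{|f - m_f| \ge r\}) \le 2C_1 e^{-C_2 r^p}$ for the median $m_f \in \mathbb{R}^k$ of the $\mathbb{R}^k$-valued map (coordinatewise, or the vector median in the sense used in \cite{ledole}); integrating $p$-th powers of the tail then controls $\int_X |f - m_f|^p \, d\mu_X$.

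Next I would pass from the deviation around the median to the deviation around the mean (the mean being $0$ by hypothesis). The standard device is that $\|m_f - \mathbb{E}(f)\| \le (\int_X \|f - \mathbb{E}(f)\|^p)^{1/p}$ up to a universal constant, or more simply one estimates $\|m_f\|$ directly from the tail bound and Jensen, so that $\int_X |f|^p \, d\mu_X \le 2^{p-1}(\int_X |f - m_f|^p \, d\mu_X + \|m_f\|^p)$ is dominated by a constant (depending on $p$, $C_1$) times $C_2^{-1}$ times a dimensional factor. The comparison with the Gaussian integral $\int_{\mathbb{R}^k} |y|^p \, d\gamma_k(y) = 2^{p/2}\Gamma(\tfrac{p+k}{2})/\Gamma(\tfrac{k}{2})$ is then just the observation that the extremal behaviour of the right normalization is captured by the Gaussian, which on $\mathbb{R}^k$ with its standard Euclidean structure satisfies $\alpha_{\gamma_k}(r) \le e^{-r^2/2}$ and has $L^p$-norm of size $\approx \sqrt{k}$; the constant $M_p = \int_{\mathbb{R}} |s|^p \, d\gamma_1$ enters because one reduces the $k$-dimensional Gaussian moment to the one-dimensional one via rotational symmetry and a covariance/polarization argument exactly as in \cite{ledole}.

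The key computational input I would isolate as a lemma (or quote from \cite{ledole}) is the one-dimensional fact: if $\alpha_X(r) \le C_1 e^{-C_2 r^p}$ then for every $1$-Lipschitz $g \colon X \to \mathbb{R}$ with mean zero, $\int_X |g|^p \, d\mu_X \le \tfrac{C}{C_2 M_p}\int_{\mathbb{R}}|s|^p \, d\gamma_1(s) = C/C_2$, which is immediate by integrating the tail $\mu_X(\{|g - m_g|\ge r\}) \le 2C_1 e^{-C_2 r^p}$ and bounding $|m_g|$. The $k$-dimensional statement follows by applying this to the single function $|f|$ after checking $|f|$ is $1$-Lipschitz and has $L^1$-mean comparable to its median, then reorganizing constants to display the Gaussian comparison; the final asymptotic $\frac{C}{C_2 M_p}\cdot\frac{2^{p/2}\Gamma((p+k)/2)}{\Gamma(k/2)} \approx \frac{C k^{p/2}}{C_2}$ is just Stirling. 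The main obstacle is purely bookkeeping: making sure the "mean zero" normalization is genuinely used (rather than a median normalization) so that the constant really is dimension-free in $C_2$, and tracking that the constant $C$ depends only on $p$ and $C_1$ and not on $C_2$ or $k$; this is where one must be careful to first subtract the median, estimate, and only then compare the median to the mean using the mean-zero hypothesis, rather than the other way around. Since the proof is explicitly stated to be "the same proof of \cite[Theorem 1]{ledole}", I would keep the exposition brief and refer the reader there for the details of the Gaussian comparison, spelling out only the passage through Lemma \ref{vel1} and the moment integration.
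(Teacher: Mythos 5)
The paper offers no proof of this theorem beyond the remark that the proof of Ledoux--Oleszkiewicz \cite[Theorem 1]{ledole} carries over verbatim, so the only issue is whether your sketch reproduces that argument. It does not: the reduction you actually propose has a genuine gap. The device that makes \cite{ledole} work is the rotational-invariance identity $M_p\,|v|^p=\int_{\mathbb{R}^k}|\langle y,v\rangle|^p\,d\gamma_k(y)$, applied with $v=f(x)$ and followed by Fubini, so that
\begin{align*}
\int_X|f(x)|^p\,d\mu_X(x)=\frac{1}{M_p}\int_{\mathbb{R}^k}\Big(\int_X|\langle y,f(x)\rangle|^p\,d\mu_X(x)\Big)\,d\gamma_k(y).
\end{align*}
For each fixed $y$ the scalar function $x\mapsto\langle y,f(x)\rangle$ is $|y|$-Lipschitz and genuinely has mean zero, so the one-dimensional lemma you correctly isolate gives the inner integral the bound $C|y|^p/C_2$, and integrating in $y$ produces exactly $\frac{C}{C_2M_p}\int_{\mathbb{R}^k}|y|^p\,d\gamma_k(y)$. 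The Gaussian and the constant $M_p$ enter as the mechanism that reduces the vector-valued $f$ to scalar mean-zero functions, not merely as a computation of the right-hand side.

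Your substitute reduction --- a tail bound for $|f-m_f|$ around a vector median, or an application of the scalar lemma to $|f|$ followed by ``comparing the median to the mean'' --- does not close. First, a bound $\mu_X(\{|f-m_f|\ge r\})\le 2C_1e^{-C_2r^p}$ around a single point $m_f\in\mathbb{R}^k$ with constants independent of $k$ is false (take $X=(\mathbb{R}^k,\gamma_k)$ and $f$ the identity: $|f|$ concentrates near $\sqrt{k}$, not near any fixed point at Gaussian rate); Lemma \ref{vel1} applies only to real-valued functions. Second, $|f|$ is $1$-Lipschitz but its mean is $\int_X|f|\,d\mu_X$, which is not zero --- it is essentially the quantity being estimated --- so the mean-zero scalar lemma does not apply to it, and the chain $m_{|f|}\le\mathbb{E}|f|+(C/C_2)^{1/p}\le(\int_X|f|^p\,d\mu_X)^{1/p}+(C/C_2)^{1/p}$ fed into $\int_X|f|^p\,d\mu_X\le 2^{p-1}\big(\int_X\big||f|-m_{|f|}\big|^p\,d\mu_X+m_{|f|}^p\big)$ is circular, the coefficient in front of the recovered $\int|f|^p$ being larger than $1$. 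A coordinatewise treatment with a union bound over the $k$ coordinates (or over a net of directions) does yield a bound, but loses at least a logarithmic factor in $k$ against the stated $k^{p/2}$. The scalar lemma and the Stirling asymptotics in your sketch are fine; the $|f|$/median reduction must be replaced by the Gaussian-projection identity above.
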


   \begin{thm}\label{t4.6.2.3}Let a compact metric group $G$ continuously acts on a
    $k$-dimensional Hadamard manifold $N$. Assume that a point $x\in
    N$ satisfies the H\"{o}lder condition (\ref{s4.6.2.2}). We also assume that the
    group $G$ satisfies (\ref{s4.6.2.1}) and $\alpha\leq \beta$. Then, there exists a point
    $z_{x}\in Gx$ such that
    \begin{align}\label{s4.6.2.3}
     \diam (G z_{x})\leq \frac{C C_1 k^{1/2}}{(C_3)^{\alpha/\beta}}+ \rho \Big( \frac{C C_1 k^{1/2}}{(C_3)^{\alpha/\beta}}\Big),
     \end{align}where $C$ is a constant depending
    only on $\alpha / \beta$ and $C_1$.
    \begin{proof}To apply Corollary \ref{c4.6.1.1}, we shall estimate $\crad
     (\nu_{G,x},1-\kappa)$ for $0<\kappa <1/2$ from the above. Putting
     $z:=c(\nu_{G,x})$, as in the proof of Theorem \ref{t4.6.2.1}, we identify the tangent space of $N$ at $z$ with
     the Euclidean space $\mathbb{R}^k$. Since the map $\exp_z^{-1}:N\to
     \mathbb{R}^k$ is a $1$-Lipschitz map, by virtue of Lemma \ref{l4.6.2.2} and Theorem \ref{t4.6.2.2}, we
     have
     \begin{align*}
      \int_N \dist_N (y,z)^{\beta/ \alpha}d\nu_{G,x}(y) = \int_N
      |(\exp_z^{-1})(y)|^{\beta /\alpha }d\nu_{G,x}(y)\leq
      \frac{C C_1^{\beta /\alpha}k^{\beta/(2\alpha)}}{C_3},
      \end{align*}where $C$ is a constant depending only on $C_2$ and
     $\beta /\alpha$. Combining this inequality with the Chebyshev
     inequality, we hence get
     \begin{align*}
      \crad(\nu_{G,x},1-\kappa)\leq \frac{CC_1k^{1/2}}{(C_3 \kappa)^{\alpha /\beta}}
      \end{align*}for any $0<\kappa$. Applying Corollary \ref{c4.6.1.1}, we
     therefore obtain (\ref{s4.6.2.3}). This completes the proof. 
     \end{proof}
    \end{thm}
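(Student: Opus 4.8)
The plan is to deduce everything from Corollary \ref{c4.6.1.1}: once $r_x:=\lim_{\kappa\uparrow 1/2}\crad(\nu_{G,x},1-\kappa)$ is controlled from above, that corollary hands us a point $z_x\in Gx$ whose orbit displacements $\dist_X(z_x,gz_x)$ are bounded in terms of $r_x$ and $\rho$, and, since $Gz_x=Gx$, this bounds $\diam(Gz_x)$; the numerical factors coming from the triangle inequality and from $\kappa\uparrow 1/2$ get absorbed into the constant $C$ of (\ref{s4.6.2.3}). So the real content is an estimate of the central radius of the orbit measure $\nu_{G,x}$.

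First I would view $(N,\dist_N,\nu_{G,x})$ as an mm-space — legitimate since the orbit $Gx$ is compact, so $\nu_{G,x}$ has compact support — and bound its concentration function. By Lemma \ref{l4.6.2.2}, the H\"{o}lder hypothesis (\ref{s4.6.2.2}) on the orbit map at $x$ together with the Gaussian-type isoperimetric estimate (\ref{s4.6.2.1}) for $G$ gives $\alpha_{(N,\nu_{G,x})}(r)\leq C_2 e^{-C_1^{-\beta/\alpha}C_3 r^{\beta/\alpha}}$. Here is exactly where $\alpha\leq\beta$ is used: it guarantees that the exponent $p:=\beta/\alpha$ is at least $1$, which is the range in which Theorem \ref{t4.6.2.2} is available.

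Next, put $z:=c(\nu_{G,x})$ and identify $T_zN$ with $\mathbb{R}^k$. Two features of a Hadamard manifold come in: the inverse exponential map $\exp_z^{-1}:N\to\mathbb{R}^k$ is $1$-Lipschitz (distances cannot increase, by the hinge theorem), and by Proposition \ref{p4.6.1.2} it has mean zero, $\int_N\exp_z^{-1}(y)\,d\nu_{G,x}(y)=0$, because $z$ is the center of mass (cf.\ Proposition \ref{p4.6.1.1}). Applying Theorem \ref{t4.6.2.2} to the mm-space $(N,\nu_{G,x})$ and this $1$-Lipschitz mean-zero map, with $p=\beta/\alpha$, yields
\begin{align*}
 \int_N\dist_N(y,z)^{\beta/\alpha}\,d\nu_{G,x}(y)=\int_N\bigl|\exp_z^{-1}(y)\bigr|^{\beta/\alpha}\,d\nu_{G,x}(y)\leq \frac{C\,C_1^{\beta/\alpha}k^{\beta/(2\alpha)}}{C_3},
\end{align*}
the constant $C$ depending only on $\beta/\alpha$ and $C_2$ (the Gaussian moment $\int_{\mathbb{R}^k}|y|^p\,d\gamma_k(y)$ being of order $k^{p/2}$, as in Theorem \ref{t4.6.2.2}). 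A Chebyshev inequality converts this $(\beta/\alpha)$-th moment bound into $\nu_{G,x}(B_N(z,t))\geq 1-\kappa$ as soon as $t\geq C C_1 k^{1/2}(C_3\kappa)^{-\alpha/\beta}$, i.e.\ $\crad(\nu_{G,x},1-\kappa)\leq C C_1 k^{1/2}(C_3\kappa)^{-\alpha/\beta}$ for every $\kappa>0$.

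Letting $\kappa\uparrow 1/2$ bounds $r_x$ by $C C_1 k^{1/2}C_3^{-\alpha/\beta}$, and then Corollary \ref{c4.6.1.1} produces $z_x\in Gx$ (an orbit point lying within $r_x$ of $z$) with $\dist_X(z_x,gz_x)\leq 2r_x+\rho(+2r_x)$ for all $g\in G$; together with $Gz_x=Gx$ and a relabelling of the constant, this is (\ref{s4.6.2.3}). I expect the main obstacle to be the second and third paragraphs, not the last: one must check that $(N,\nu_{G,x})$ really fulfils the hypotheses of Theorem \ref{t4.6.2.2}, in particular $p\geq 1$ — which is precisely why $\alpha\leq\beta$ is assumed — and then keep careful track of how the isoperimetric constants of $G$ are transformed by Lemma \ref{l4.6.2.2}, by the moment inequality, and by the Chebyshev step, so that the constant in (\ref{s4.6.2.3}) ends up depending on no data beyond $\alpha/\beta$ and $C_2$.
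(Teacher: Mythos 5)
Your proposal follows the paper's proof essentially step for step: bound $\alpha_{(N,\nu_{G,x})}$ via Lemma \ref{l4.6.2.2}, apply Theorem \ref{t4.6.2.2} with $p=\beta/\alpha\geq 1$ to the $1$-Lipschitz mean-zero map $\exp_{c(\nu_{G,x})}^{-1}$, convert the moment bound to a central-radius bound by Chebyshev, and finish with Corollary \ref{c4.6.1.1}. It is correct, and in fact slightly more careful than the paper in making explicit the mean-zero hypothesis (via Proposition \ref{p4.6.1.2}) and the role of $\alpha\leq\beta$.
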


    \subsubsection{Cases of finite groups}

    In this subsubsection, we shall consider the case where $G$ is a
    finite group. Let $G$ be a finite group and $S\subseteq G\setminus
    \{e_G\}$ be a symmetric set of generators of $G$. We denote by
    $\Gamma (G,S)$ the \emph{Cayley graph} of $G$ with respect to
    $S$. For such $S$, we shall consider the group $G$ as a metric group with
    respect to the Cayley graph distance function.

    Let $\Gamma =(V,E)$ be a simple finite graph, where \emph{simple}
    means that there is at most one edge joining two vertices and no
    loops from a vertex to itself. The discrete Laplacian
    $\triangle_{\Gamma}$ act on functions $f$ on $V$ as follows
\begin{align*}
 \triangle_{\Gamma}f(x):= \sum_{y \sim  x}(f(x)-f(y)),
 \end{align*}where $x \sim y$ means that $x$ and $y$ are connected by an
 edge. We denote by $\lambda_1(\Gamma)$ the non-zero first eigenvalue of
 the Laplacian $\triangle_{\Gamma}$. 

     As Theorem \ref{t4.6.2.1}, Gromov's observation in \cite[Section
     13]{gromovcat} together with one in \cite[Section
     $3\frac{1}{2}.41$]{gromov} imply the following lemma:
     \begin{lem}\label{l4.6.3.1}Let $S\subseteq G\setminus \{e_G\}$ be a symmetric set of generators of a finite group $G$ and assume
     that the group $G$ continuously acts on a $k$-dimensional Hadamard
     manifold $N$. Then, for any $x\in N$ and $\kappa>0$, we have
      \begin{align*}
       \crad(\nu_{G,x},1-\kappa)\leq \omega_x(1)
       \Big(\frac{k\#S}{2\kappa\lambda_1(\Gamma (G,S) )}\Big)^{1/2}.
       \end{align*}
      \begin{proof}Suppose that
       \begin{align}\label{s4.6.3.1}
        r:=\crad(\nu_{G,x},1-\kappa)> \omega_x(1)
       \Big(\frac{k\#S}{2\kappa\lambda_1(\Gamma (G,S) )}\Big)^{1/2}.
        \end{align}As in the proof of Theorem \ref{t4.6.2.1}, we identify the
       tangent space of $N$ at $z:=c(\nu_{G,x})$ with the Euclidean space
       $\mathbb{R}^k$. By the Chebyshev inequality, we get
       \begin{align*}
        \int_{G} |(\exp_z^{-1} \circ f^x)(g) |^2d \mu_G(g)= \int_G
        \dist_N(f^x(g),z)^2 d\mu_G(g)\geq \kappa r^2.
        \end{align*}Hence, there exists $i_0$ such that
       \begin{align}\label{s4.6.3.2}
        \int_G ((\exp_z^{-1} \circ f^x)(g))_{i_0}^2d \mu_G(g)\geq
        \frac{\kappa r^2}{k}.
        \end{align}
       Putting $\varphi:= (\exp_z^{-1} \circ f^x)_{i_0}$, by (\ref{s4.6.3.1})
       and (\ref{s4.6.3.2}), we obtain
       \begin{align*}
        \lambda_1(\Gamma (G,S))=\ & \inf\frac{\sum_{g,g'\in G;g\sim g'}
        (f(g)- f(g'))^2}{2\sum_{g\in G} f(g)^2}\\
        \leq \ &
        \frac{\sum_{g,g'\in G;g\sim g'}(\varphi(g)-\varphi(g'))^2}{2\sum_{g\in
        G} \varphi (g)^2}\\
        \leq \ & \frac{\sum_{g,g'\in G;g\sim g'}\dist_N(f^x(g),f^x(g'))^2}{2\sum_{g\in
        G} \varphi (g)^2}\\
        \leq \ & \frac{ \#G \#S \cdot \omega_x(1)^2 }{\#G\int_G \varphi(g)^2
        d\mu_G(g)}\\
        = \ & \frac{\omega_x(1)^2\#S}{\int_{G}\varphi(g)^2 d\mu_G(g)}\\
        \leq \ & \frac{\omega_x(1)^2 k \#S}{\kappa r^2}\\
        < \ & \lambda_1(\Gamma (G,S)),
        \end{align*}where the infimum is taken over all nontrivial
       functions $f:G\to \mathbb{R}$ such that $\sum_{g\in G}f(g)=0$. This is a contradiction. This completes the proof.
       \end{proof}
      \end{lem}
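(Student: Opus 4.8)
The plan is to prove Lemma~\ref{l4.6.3.1} by imitating the proof of Theorem~\ref{t4.6.2.1}, replacing the continuous Laplacian and $\lambda_1(M)$ by the discrete graph Laplacian $\triangle_{\Gamma(G,S)}$ and its spectral gap $\lambda_1(\Gamma(G,S))$. First I would argue by contradiction: assume
\[
r:=\crad(\nu_{G,x},1-\kappa)>\omega_x(1)\Big(\frac{k\#S}{2\kappa\lambda_1(\Gamma(G,S))}\Big)^{1/2},
\]
and seek to violate the variational characterization of $\lambda_1(\Gamma(G,S))$. Set $z:=c(\nu_{G,x})$ and, exactly as in Theorem~\ref{t4.6.2.1}, identify $T_zN$ with $\mathbb{R}^k$ via $\exp_z^{-1}$, which is $1$-Lipschitz by the hinge theorem; write $f^x$ for the orbit map $g\mapsto gx$. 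Since $\nu_{G,x}(B_N(z,r-\varepsilon))<1-\kappa$ for small $\varepsilon$, Chebyshev's inequality applied to $y\mapsto \dist_N(y,z)^2$ gives $\int_G \dist_N(f^x(g),z)^2\,d\mu_G(g)\ge \kappa r^2$, hence some coordinate $i_0$ satisfies $\int_G ((\exp_z^{-1}\circ f^x)(g))_{i_0}^2\,d\mu_G(g)\ge \kappa r^2/k$.

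Next I would put $\varphi:=(\exp_z^{-1}\circ f^x)_{i_0}:G\to\mathbb{R}$. By Proposition~\ref{p4.6.1.2} the center-of-mass condition forces $\sum_{g\in G}\varphi(g)=0$, so $\varphi$ is an admissible test function in the Rayleigh quotient for $\lambda_1(\Gamma(G,S))$. The key estimate of the numerator is that for adjacent $g\sim g'$ in the Cayley graph one has $\dist_G(g,g')=1$, hence $|\varphi(g)-\varphi(g')|\le \dist_N(f^x(g),f^x(g'))\le \omega_x(1)$, using that $\exp_z^{-1}$ is $1$-Lipschitz and the definition of $\omega_x$. Counting the edges of $\Gamma(G,S)$ as $\tfrac12\#G\,\#S$ (since $S$ is symmetric and each vertex has degree $\#S$), the numerator $\tfrac12\sum_{g\sim g'}(\varphi(g)-\varphi(g'))^2$ is bounded by $\tfrac12\#G\,\#S\,\omega_x(1)^2$. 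Dividing by $\sum_{g\in G}\varphi(g)^2 = \#G\int_G\varphi(g)^2\,d\mu_G(g)\ge \#G\,\kappa r^2/k$ yields
\[
\lambda_1(\Gamma(G,S))\le \frac{\omega_x(1)^2 k\,\#S}{\kappa r^2}<\lambda_1(\Gamma(G,S)),
\]
the last inequality by our assumption on $r$, a contradiction; this forces the claimed bound on $\crad(\nu_{G,x},1-\kappa)$.

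The main obstacle, though essentially bookkeeping, is handling the normalizations carefully: the Haar measure $\mu_G$ on the finite group is the uniform probability measure, so $\int_G h\,d\mu_G=\frac{1}{\#G}\sum_{g\in G}h(g)$, and one must make sure the factors of $\#G$ cancel between numerator and denominator of the Rayleigh quotient and that the factor of $2$ from the edge count matches the $2$ in the denominator of the discrete Rayleigh quotient $\sum_{g\sim g'}(f(g)-f(g'))^2/(2\sum_g f(g)^2)$. A secondary point to check is that $\crad(\nu_{G,x},1-\kappa)$ is indeed realized so that one may use strict inequality in Chebyshev's estimate (taking $r-\varepsilon$ and letting $\varepsilon\downarrow 0$ if necessary), but since $Gx$ is finite this is immediate. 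No deep input beyond Proposition~\ref{p4.6.1.2}, the hinge theorem, and the variational formula for $\lambda_1(\Gamma(G,S))$ is needed.
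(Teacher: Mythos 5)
Your argument is correct and follows essentially the same route as the paper's proof: contradiction via Chebyshev at the center of mass $z=c(\nu_{G,x})$, selection of a coordinate of $\exp_z^{-1}\circ f^x$ as a mean-zero test function, and the variational characterization of $\lambda_1(\Gamma(G,S))$ with the edge increments bounded by $\omega_x(1)$. Your explicit attention to the factor of $2$ in the edge count versus the denominator of the Rayleigh quotient is precisely the bookkeeping needed to recover the constant $2\kappa$ in the statement (the paper's own displayed chain in fact drops this factor in an intermediate line).
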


      Applying Lemma \ref{l4.6.3.1} to Corollary \ref{c4.6.1.1}, we obtain the
      following theorem:
    \begin{thm}Let $S\subseteq G\setminus \{e_G\}$ be a symmetric set of generators of a finite group $G$ and assume
     that the group $G$ continuously acts on a $k$-dimensional Hadamard
     manifold $N$. Then, for any $x\in N$, we have
     \begin{align*}
       \dist_N(c(\nu_{G,x}),g c(\nu_{G,x})) \leq \omega_x(1)
      \Big(\frac{k \# S}{\lambda_1(\Gamma(G,S))}\Big)^{1/2}+ \rho \Big(+
      \omega_x (1)\Big(\frac{k \# S}{\lambda_1(\Gamma(G,S))}\Big)^{1/2}\Big)
      \end{align*}for any $g\in G$. There also exists a point
     $z_{x}\in Gx$ such that
     \begin{align*}
      \dist_N (z_{x},gz_{x})\leq \ &\min \Big\{  \omega_x(1)
      \Big(\frac{k \# S}{ \lambda_1(\Gamma(G,S))}\Big)^{1/2}    + \rho
      \Big( +2\omega_x(1)
      \Big(\frac{k \# S}{ \lambda_1(\Gamma(G,S))}\Big)^{1/2}\Big),\\
      & \hspace{1cm}  \omega_x(1)
      \Big(\frac{k \# S}{ \lambda_1(\Gamma(G,S))}\Big)^{1/2}+ 2 \rho
      \Big(  + \omega_x(1)
      \Big(\frac{k \# S}{\lambda_1(\Gamma(G,S))}\Big)^{1/2}\Big)
      \Big\} \tag*{}
      \end{align*}for any $g\in G$.
     \end{thm}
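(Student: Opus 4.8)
The plan is to chain Lemma \ref{l4.6.3.1} into Corollary \ref{c4.6.1.1}; no new ideas are needed, only a small limiting argument together with the monotonicity of $\rho$. First I would record that for each fixed $x\in N$ the quantity $\crad(\nu_{G,x},1-\kappa)$ is nonincreasing in $\kappa$, since enlarging $\kappa$ relaxes the defining constraint $\nu_{G,x}(B_N(c(\nu_{G,x}),\rho))\geq 1-\kappa$ and therefore can only decrease the infimal admissible radius. Hence $r_x:=\lim_{\kappa\uparrow 1/2}\crad(\nu_{G,x},1-\kappa)$ exists and satisfies $r_x\leq \crad(\nu_{G,x},1-\kappa)$ for every $\kappa\in(0,1/2)$. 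Inserting the bound of Lemma \ref{l4.6.3.1} and letting $\kappa\uparrow 1/2$, so that $2\kappa\uparrow 1$, I obtain
\[
 r_x\ \leq\ \omega_x(1)\Big(\frac{k\,\#S}{\lambda_1(\Gamma(G,S))}\Big)^{1/2}\ =:\ R .
\]

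Next I would feed this $r_x$ into Corollary \ref{c4.6.1.1}. It gives at once $\dist_N(c(\nu_{G,x}),g\,c(\nu_{G,x}))\leq r_x+\rho(+r_x)$ for every $g\in G$, and provides a point $z_x\in Gx$ with $\dist_N(z_x,gz_x)\leq\min\{2r_x+\rho(+2r_x),\,2r_x+2\rho(+r_x)\}$ for every $g\in G$. To put these into the form stated in the theorem I would then replace $r_x$ by its upper bound $R$. This is legitimate since $\rho$ is nondecreasing---a larger threshold admits more pairs in the supremum defining $\rho$---so that $\eta\mapsto\rho(+\eta)$ is nondecreasing too, and from $r_x\leq R$ one may substitute $R$ for $r_x$ throughout, yielding the two displayed inequalities.

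The only step at which I expect to need any care is the interchange of the limit $\kappa\uparrow 1/2$ with the estimate of Lemma \ref{l4.6.3.1}---the factor $2\kappa$ in the denominator must be driven to $1$ from below---and this is exactly what the monotonicity of $\crad$ in $\kappa$, noted above, handles. Beyond that, and the equally elementary monotone substitution of $R$ for $r_x$ inside $\rho$, the assertion is a purely formal corollary of Lemma \ref{l4.6.3.1} and Corollary \ref{c4.6.1.1}, so I do not anticipate a genuine obstacle.
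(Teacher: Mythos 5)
Your argument is exactly the paper's: the paper offers no proof beyond the phrase ``Applying Lemma \ref{l4.6.3.1} to Corollary \ref{c4.6.1.1}'', and the monotonicity observations you supply (of $\kappa\mapsto\crad(\nu_{G,x},1-\kappa)$, which makes the limit $\kappa\uparrow 1/2$ harmless, and of $\eta\mapsto\rho(+\eta)$, which justifies replacing $r_x$ by its upper bound inside $\rho$) are precisely the routine details being suppressed. One small discrepancy worth flagging: Corollary \ref{c4.6.1.1} gives $\min\{2r_x+\rho(+2r_x),\,2r_x+2\rho(+r_x)\}$, so after substituting $r_x\leq R:=\omega_x(1)\bigl(k\,\#S/\lambda_1(\Gamma(G,S))\bigr)^{1/2}$ your derivation produces leading terms $2R$, whereas the theorem's second display has leading terms $R$; the printed statement appears to have dropped a factor of $2$ there (the arguments of $\rho$ are consistent with yours), and the bound you obtain is the one that actually follows from the cited corollary.
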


      \section{L\'{e}vy group action}

      In this section, we discuss about a L\'{e}vy group action to
      concrete metric spaces appeared in Section 3.

      A metrizable
    group $G$ is called a \emph{L\'{e}vy group} if it contains an
increasing chain of compact subgroups $G_1\subseteq G_2 \subseteq 
\cdots \subseteq G_n \subseteq  \cdots$ having an everywhere dense union
in $G$ and such that for some right-invariant compatible distance
function $\dist_G$ on $G$ the groups $G_n$, $n\in \mathbb{N}$, equipped
with the Haar measures $\mu_{G_n}$ normalized as $\mu_{G_n}(G_n)=1$ and
    the restrictions of the distance function $\dist_G$, form a L\'{e}vy
    family. See \cite{milgro}, \cite{mil5}, \cite{pestov2}, \cite{pestov4} and references therein for informations about a L\'{e}vy group. 

Let a topological group $G$ acts on a metric space $X$. The action
    is called \emph{bounded} if for any $\varepsilon >0$ there exists a
    neighbourhood $U$ of the identity element $e_{G}\in G$ such that
    $\dist_X(x,gx)<\varepsilon$ for any $g\in U$ and $x\in X$. Note that
 every bounded action is continuous.
 
            \begin{lem}[{cf.~\cite[Theorem 1]{pestov4}}]\label{l3.2}Assume that a metric group $G$ with a right
             invariant distance function $\dist_G$ boundedly acts on a
             metric space $X$. Then, orbit maps $f_x:G\to X$ for all $x\in X$ are
             uniformly equicontinuous.
             \end{lem}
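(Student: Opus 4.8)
The plan is to unwind the two definitions — bounded action and uniform equicontinuity of the family $\{f_x\}_{x\in X}$ — and observe that, because $\dist_G$ is right-invariant, the "basepoint" $x$ and the "moving data" $g,g'$ decouple in exactly the right way. Recall that $\{f_x\}_{x\in X}$ is uniformly equicontinuous means: for every $\varepsilon>0$ there is $\delta>0$ such that $\dist_G(g,g')\le\delta$ implies $\dist_X(f_x(g),f_x(g'))\le\varepsilon$ \emph{for all} $x\in X$ simultaneously. Equivalently, in the notation of Subsection 2.3, it says $\sup_{x\in X}\omega_x(\delta)\le\varepsilon$, i.e. the functions $\omega_x^{(G,X)}$ are bounded above near $0$ by a single function tending to $0$.

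First I would fix $\varepsilon>0$ and, using the definition of bounded action, choose a neighbourhood $U$ of $e_G$ with $\dist_X(y,hy)<\varepsilon$ for all $h\in U$ and all $y\in X$. Since $\dist_G$ is compatible with the topology, shrink $U$ to contain a ball: pick $\delta>0$ with $B_G(e_G,\delta)\subseteq U$, where balls are taken with respect to $\dist_G$. Now let $x\in X$ be arbitrary and let $g,g'\in G$ satisfy $\dist_G(g,g')\le\delta$. The key computation is
\begin{align*}
\dist_X(f_x(g),f_x(g'))=\dist_X(gx,g'x)=\dist_X\big(g'(g'^{-1}g x),\,g'x\big).
\end{align*}
Set $y:=g'^{-1}gx\in X$ and $h:=g'(g'^{-1}g)^{-1}\cdot$— more cleanly, write $\dist_X(gx,g'x)=\dist_X\big((gg'^{-1})(g'x),\,g'x\big)$, so with $h:=gg'^{-1}$ and $y:=g'x$ we get $\dist_X(gx,g'x)=\dist_X(hy,y)$. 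By right-invariance of $\dist_G$, $\dist_G(h,e_G)=\dist_G(gg'^{-1},e_G)=\dist_G(g,g')\le\delta$, hence $h\in B_G(e_G,\delta)\subseteq U$, and therefore $\dist_X(hy,y)<\varepsilon$ by the choice of $U$. Since $x$ was arbitrary and $\delta$ depends only on $\varepsilon$, this establishes uniform equicontinuity of $\{f_x\}_{x\in X}$.

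The only real subtlety — and the place I expect to spend a line of care — is the direction of invariance: the action $G\curvearrowright X$ is a left action, so $gx=(gg'^{-1})(g'x)$ forces us to produce the element $gg'^{-1}$ on the \emph{left}, which is precisely why $\dist_G$ being \emph{right}-invariant is what makes $\dist_G(gg'^{-1},e_G)=\dist_G(g,g')$ work. Everything else is a direct translation of definitions, so no further obstacle arises; the argument also shows the quantitative statement $\omega_x^{(G,X)}(\eta)\le \sup_{y\in X}\dist_X(hy,y)$ over $h$ with $\dist_G(h,e_G)\le\eta$, uniformly in $x$, which is the form used later.
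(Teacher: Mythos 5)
Your argument is correct and is precisely the standard proof: the identity $gx=(gg'^{-1})(g'x)$ together with right-invariance giving $\dist_G(gg'^{-1},e_G)=\dist_G(g,g')$ reduces uniform equicontinuity of all orbit maps simultaneously to the definition of a bounded action. The paper itself offers no proof of this lemma (it defers to the cited result of Pestov), and your derivation matches that standard argument, so there is nothing to correct beyond the cosmetic $\le\delta$ versus $<\delta$ mismatch with the open ball $B_G(e_G,\delta)$, which is harmless.
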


We shall consider an action of a L\'{e}vy group to a metric space $X$
  satisfying the following condition:

  ($\lozenge $): We have $\lim_{n\to
           \infty}\obs_{X}(X_n;-\kappa)=0$ for any $\kappa>0$ and
           any L\'{e}vy family $\{X_n\}_{n=1}^{\infty}$.

 Note that $\mathbb{R}$-trees, doubling spaces, metric graphs, and
 Hadamard manifolds satify the condition ($\lozenge$) (see Section 3).

\begin{conj}
 Any complete Riemannian manifolds satisfy the condition($\lozenge$).
\end{conj}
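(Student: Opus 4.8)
The plan is to recast condition ($\lozenge$) in terms of the separation distance and then to attack it by localizing on $M$, using that a complete Riemannian manifold is a proper geodesic space. Fixing $X=M$, condition ($\lozenge$) is equivalent to the following assertion: whenever $\{\nu_n\}_{n=1}^{\infty}$ is a sequence of Borel probability measures on $M$ with $\lim_{n\to\infty}\sep(\nu_n;\kappa,\kappa)=0$ for every $\kappa>0$, then $\lim_{n\to\infty}\diam(\nu_n,1-\kappa)=0$ for every $\kappa>0$. Indeed, if $f_n:X_n\to M$ is $1$-Lipschitz then $\sep((f_n)_{\ast}(\mu_{X_n});\kappa,\kappa)\le\sep(X_n;\kappa,\kappa)$, and by Corollary \ref{c2.1.1} the right-hand side tends to $0$ along a L\'{e}vy family; conversely, given $\{\nu_n\}$ as above, the mm-spaces $(M,\dist_M,\nu_n)$ form a L\'{e}vy family by Corollary \ref{c2.1.1}, and the identity maps realise the $\nu_n$ as push-forwards. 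Thus it suffices to show: for each $\kappa\in(0,1/2)$ and each such sequence $\{\nu_n\}$ there exist points $x_n\in M$ and radii $\varepsilon_n\downarrow 0$ with $\nu_n(B_M(x_n,\varepsilon_n))\ge 1-\kappa$ for all large $n$.

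Next I would localize on $M$. By the Hopf--Rinow theorem $M$ is proper, so each closed metric ball is compact, hence a doubling space (indeed a compact metric space), and on it the exponential charts are bi-Lipschitz with constants controlled by its radius and by curvature bounds over it. Writing $s_n:=\sep(\nu_n;\kappa,\kappa)\to 0$, a direct triangle-inequality argument (in the spirit of the proof of Theorem \ref{t4.4.1}) shows that if $\nu_n(B_M(p_n,R_n))\ge\kappa$ then $\nu_n\big(M\setminus B_M(p_n,R_n+s_n)\big)<\kappa$, so up to measure $\kappa$ the mass of $\nu_n$ is carried by a single ball $B_M(p_n,R_n+s_n)$. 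Inside this ball one then runs the covering argument of Theorem \ref{t4.4.1} (or Proposition \ref{p4.2.1}) to upgrade ``concentrated in a ball'' to ``concentrated in an $\varepsilon$-ball'', using on the parts of large positive curvature the L\'{e}vy--Gromov inequality of Example \ref{exl1} and on the parts of very negative curvature the Hadamard concentration estimate of \cite{funano1}, applied after comparison with, and rescaling of, the model space of the relevant curvature; combining the two regimes by comparison geometry would then produce the radii $\varepsilon_n\to 0$.

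The main obstacle is uniformity at infinity. The radius $R_n$ of the ball carrying the mass, and hence its covering number $N(\delta)$ at a fixed scale $\delta$, may grow with $n$; then the quantity entering the covering estimate, $\sep(\nu_n;\kappa/N(\delta),\kappa/N(\delta))$, is evaluated at a measure threshold that shrinks with $n$, whereas $\sep(\nu_n;\cdot,\cdot)$ is only assumed to vanish at each \emph{fixed} threshold. Equivalently, the injectivity radii and local doubling constants of $M$ need not be bounded, so there is no single doubling or curvature bound to insert into Theorem \ref{t4.4.1}, and one must first confine the mass to a region of controlled geometry --- which looks circular. A promising way out is to prove \emph{quantitative} versions of the doubling and Hadamard concentration estimates in which the bound on $\diam(\nu,1-\kappa)$ degrades only mildly (e.g.\ at most linearly) as the local doubling constant, respectively the lower curvature bound, deteriorates --- for a ball of very negative curvature the Hadamard estimate applied to the rescaled hyperbolic comparison space degrades only linearly in the rescaling factor, which is exactly consistent with ($\lozenge$) --- and then to patch these estimates over an exhaustion of $M$ by regions of controlled geometry, combining them by a diagonal argument over $n$. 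Carrying out this patching uniformly is the crux of the problem.
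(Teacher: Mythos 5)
This statement is stated in the paper as a \emph{conjecture}; the paper offers no proof of it, so there is no argument of the author's to compare yours against. Judged on its own terms, your proposal is not a proof: it is a reduction plus a strategy whose decisive step you yourself flag as unresolved. The reduction itself is sound. Recasting ($\lozenge$) via Corollary \ref{c2.1.1} as ``$\sep(\nu_n;\kappa,\kappa)\to 0$ for every fixed $\kappa$ implies $\diam(\nu_n,1-\kappa)\to 0$ for every fixed $\kappa$'' is correct (modulo a routine diagonal argument to pass from a single sequence of maps to the supremum defining $\obs_M$, and modulo replacing $R_n+s_n$ by $R_n+s_n+1/n$ in the confinement step, since $\dist(A,B)$ equal to the supremum $s_n$ yields no contradiction). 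The genuine gap is exactly the uniformity issue you name: Proposition \ref{p4.2.1} and Theorem \ref{t4.4.1} consume the quantity $\sep(\nu_n;\kappa/N,\kappa/N)$ where $N$ is a covering number of the region carrying the mass, and since that region may have radius $R_n\to\infty$ and uncontrolled local geometry, $N$ grows with $n$ while the hypothesis only controls $\sep$ at thresholds independent of $n$. Nothing in the paper's toolkit closes this.

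Moreover, the two remedies you sketch are not established and one is dubious as stated. A metric ball of very negative sectional curvature in a complete manifold is not a Hadamard manifold (it need not be simply connected, and the curvature is only bounded above locally), so the concentration estimate of \cite{funano1} does not apply to it after rescaling; and the L\'{e}vy--Gromov input of Example \ref{exl1} concerns the normalized volume measure of a closed manifold with a positive Ricci lower bound, not an arbitrary Borel measure $\nu_n$ restricted to a positively curved region. The proposed ``quantitative versions degrading at most linearly in the local doubling constant'' would, if true, essentially be the conjecture itself for manifolds of unbounded geometry, so the patching argument is circular in precisely the way you suspect. What your approach does deliver is a correct proof of ($\lozenge$) for complete Riemannian manifolds of \emph{bounded geometry} (uniform lower Ricci bound and positive injectivity radius), where the confinement step plus Theorem \ref{t4.4.1} with a single doubling function suffices; you may want to record that as the honest partial result and leave the general case open, as the paper does.
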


           Let a topological group $G$ acts on a metric space $X$. We say
    that the
    topological group $G$ acts on $X$ \emph{by uniform isomorphims} if
    for each $g\in G$, the map
    $X\ni x\mapsto gx\in X$ is uniform continuous. The
    action is said to be \emph{uniformly equicontinuous} if for any
    $\varepsilon > 0$ there exists $\delta>0$ such that $\dist_X(gx,gy)<
    \varepsilon$ for every $g\in G$ and $x,y\in X$ with $\dist_X(x,y)<
 \delta$. Given a subset $S\subseteq G$ and $x\in X$, we put $S x:=\{gx
 \mid g\in S\}$.
 \begin{prop}\label{th2}Assume that a L\'{e}vy group $G$ boundedly acts on a metric
 space $X$ having the property ($\lozenge$) by uniform
 isomorphisms. Then for any compact subset $K\subseteq G$ and any
 $\varepsilon >0$, there exists a point $x_{\varepsilon, K}\in X$ such
 that $\diam (K x_{\varepsilon,K})\leq \varepsilon$. 
 \end{prop}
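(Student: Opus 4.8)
The plan is to run the Gromov--Milman fixed-point mechanism, using property ($\lozenge$) in place of the compactness of the target that is available in the classical theorem. Fix a right-invariant compatible metric $\dist_G$ on $G$ together with an increasing chain $G_1\subseteq G_2\subseteq\cdots$ of compact subgroups with dense union such that $\{(G_n,\dist_G|_{G_n},\mu_{G_n})\}_n$ is a L\'evy family. Since the action is bounded, Lemma~\ref{l3.2} furnishes a common modulus of continuity: a function $\omega$ with $\lim_{\eta\downarrow0}\omega(\eta)=0$ and $\omega_x^{(G,X)}(\eta)\le\omega(\eta)$ for all $x\in X$, $\eta>0$; in particular $\omega_x^{(G_n,X)}(\eta)\le\omega(\eta)$ for every $n$, since $G_n\subseteq G$ and the metric is the restriction.

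Next I would concentrate the orbit measures. Fix an arbitrary basepoint $x_0\in X$ and apply Corollary~\ref{c3.2} with $x_n:=x_0$ for all $n$: the sequence $\{(X,\dist_X,\nu_{G_n,x_0})\}_n$ (each $\nu_{G_n,x_0}$ a probability measure supported on the compact orbit $G_nx_0$) is a L\'evy family. By property ($\lozenge$), applied with the target being $X$ itself, $\obs_X((X,\dist_X,\nu_{G_n,x_0});-\kappa)\to0$ for every $\kappa>0$; testing against the $1$-Lipschitz identity map $X\to X$, which leaves $\nu_{G_n,x_0}$ unchanged, gives $d_n:=\diam(\nu_{G_n,x_0},1-\kappa)\to0$ as $n\to\infty$, for each fixed $\kappa>0$.

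The delicate step is passing from a concentrated measure to an \emph{honest} almost-fixed point. The naive route --- inferring from $\nu_{G_n,x_0}$ being carried by a small ball that some point of $G_nx_0$ has small \emph{full} $G_n$-orbit, via Proposition~\ref{p3.1} or Corollary~\ref{c3.1} --- introduces the distortion $\rho^{(G_n,\cdot)}$ of small distances along the orbit, and the hypotheses do not control this uniformly in $n$. The fix is to use only finitely many group elements together with the bi-invariance of the Haar measure on the compact groups $G_n$. Given $\varepsilon>0$, first choose $\delta>0$ with $\dist_X(gx,g'x)<\varepsilon/6$ whenever $\dist_G(g,g')<\delta$ (orbit-map equicontinuity), then pick a finite $\delta$-net $k_1,\dots,k_p$ of the compact set $K$ inside the dense union $\bigcup_mG_m$, with all $k_i\in G_{N_0}$, and set $\kappa:=1/(2(p+1))$. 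Choose $n\ge N_0$ large enough that $d_n$ is so small that there is a closed ball $\bar B=\bar B_X(m_n,r_n)$ with $\nu_{G_n,x_0}(\bar B)\ge1-\kappa$ and $2r_n<\varepsilon/6$ (take $m_n$ in a set witnessing the diameter bound). For $n\ge N_0$ each $k_i$ lies in $G_n$, so left-invariance of $\mu_{G_n}$ gives
\[
\mu_{G_n}(\{g:k_igx_0\in\bar B\})=\nu_{G_n,x_0}(\bar B)\ge1-\kappa ,\qquad \mu_{G_n}(\{g:gx_0\in\bar B\})\ge1-\kappa .
\]
Intersecting these $p+1$ events, whose complements have measure at most $\kappa$ each, leaves a set of measure $\ge1-(p+1)\kappa>0$, hence nonempty; any $g^\ast$ in it produces $x^\ast:=g^\ast x_0$ with $x^\ast\in\bar B$ and $k_ix^\ast\in\bar B$ for all $i$, so $\dist_X(k_ix^\ast,x^\ast)\le 2r_n<\varepsilon/6$.

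Finally I would conclude: for arbitrary $k\in K$ pick $k_i$ with $\dist_G(k,k_i)<\delta$; orbit-map equicontinuity at $x^\ast$ gives $\dist_X(kx^\ast,x^\ast)\le\dist_X(kx^\ast,k_ix^\ast)+\dist_X(k_ix^\ast,x^\ast)<\varepsilon/6+\varepsilon/6$, whence $\diam(Kx^\ast)\le 2\sup_{k\in K}\dist_X(kx^\ast,x^\ast)<\varepsilon$, so $x_{\varepsilon,K}:=x^\ast$ works. As flagged, the only real obstacle is the third step: one must resist asking for a point with small \emph{full} $G_n$-orbit (which would force control of the action's distortion at the concentration scale, unavailable here) and instead ask only for small orbit under a finite net of $K$, after which the bi-invariance of the Haar measures makes the construction immediate.
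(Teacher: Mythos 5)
Your proof is correct. Its skeleton is the same as the paper's: fix the right-invariant metric and the chain $G_1\subseteq G_2\subseteq\cdots$, use Lemma \ref{l3.2} to get uniform equicontinuity of the orbit maps, replace $K$ by a finite $\delta$-net $\{k_1,\dots,k_p\}$ lying in some $G_{N_0}$, and use Corollary \ref{c3.2} together with ($\lozenge$) to concentrate $\nu_{G_n,x_0}$ into a ball of radius $o(1)$. Where you genuinely diverge is the extraction of the almost-fixed point. The paper does go through the mechanism of Proposition \ref{p3.1} that you reject, but only for the finite net: it takes $\kappa$ just below $1/2$, pushes the small ball forward by each $g_i$ of the net, and uses $g_iB_X(x_{n_0},\varepsilon_{n_0})\subseteq B_X(g_ix_{n_0},\rho^{(\{g_1,\dots,g_N\},X)}(\varepsilon_{n_0}))$ plus $G_{n_0}$-invariance of $\nu_{G_{n_0},x}$ to force the two balls to meet. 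The distortion term that worried you is therefore $\rho$ of a \emph{fixed finite set} of maps, independent of $n$, and it tends to $0$ with $\varepsilon_{n_0}$ precisely because the action is by uniform isomorphisms --- so the obstacle you flag is real for $\rho^{(G_n,X)}$ but is resolved in the paper by the same finite-net restriction you introduce, not by abandoning that route. Your alternative --- shrinking $\kappa$ to $1/(2(p+1))$, translating the event $\{g:gx_0\in\bar B\}$ by each $k_i^{-1}$ via left-invariance of $\mu_{G_n}$, and intersecting --- lands $x^{\ast}$ and all $k_ix^{\ast}$ in the same small ball with no distortion term at all; it is slightly cleaner and, notably, makes no use of the ``by uniform isomorphisms'' hypothesis in that step, whereas the paper's version yields an explicit bound involving $\rho$ and keeps $\kappa$ fixed. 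Your $\varepsilon/6$ bookkeeping, which returns $\diam(Kx^{\ast})\le 2\sup_{k\in K}\dist_X(kx^{\ast},x^{\ast})<\varepsilon$, is also tighter than the paper's final display, which as written only gives $\dist_X(x_{n_0},gx_{n_0})\le\varepsilon$ and hence diameter $2\varepsilon$.
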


\begin{prop}\label{th3}There are no non-trivial bounded uniformly equicontinuous actions of a L\'{e}vy group on a metric space having the property ($\lozenge$).
 \end{prop}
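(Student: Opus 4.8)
The plan is to establish directly that the action is trivial, i.e. that $gx=x$ for every $g\in G$ and $x\in X$; equivalently, that every orbit is a single point. Fix an arbitrary $x\in X$. For each $n$ let $f^n_x\colon G_n\to X$ be the orbit map $g\mapsto gx$ and set $\nu_{G_n,x}:=(f^n_x)_{\ast}(\mu_{G_n})$; since $G_n$ is compact and the action is continuous (boundedness implies continuity), $\nu_{G_n,x}$ is a Borel probability measure supported on the compact orbit $G_nx$, so $(\overline{G_nx},\dist_X,\nu_{G_n,x})$ is a genuine mm-space. Because the action is bounded and $\dist_G$ is right invariant, Lemma \ref{l3.2} supplies one modulus $\omega$ with $\lim_{\eta\downarrow 0}\omega(\eta)=0$ and $\omega_x^{(G,X)}(\eta)\le\omega(\eta)$, hence $\omega_x^{(G_n,X)}(\eta)\le\omega(\eta)$ for all $n$. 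As $\{G_n\}$ is a L\'evy family, Corollary \ref{c3.2} then shows $\{(\overline{G_nx},\dist_X,\nu_{G_n,x})\}_n$ is a L\'evy family. Since $X$ has property $(\lozenge)$, this forces $\obs_X(\overline{G_nx};-\kappa)\to 0$ for every $\kappa>0$; applied to the $1$-Lipschitz isometric inclusion $\overline{G_nx}\hookrightarrow X$ this gives $\diam(\nu_{G_n,x},1-\kappa)\to 0$ as $n\to\infty$, for each fixed $\kappa\in(0,1/2)$.

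The next step is to convert this concentration of $\nu_{G_n,x}$ into smallness of the orbit via Proposition \ref{p3.1}. Uniform equicontinuity of the action says exactly that $\theta(\eta):=\sup\{\dist_X(gy,gz)\mid g\in G,\ \dist_X(y,z)\le\eta\}$ satisfies $\lim_{\eta\downarrow 0}\theta(\eta)=0$, and since $G_n\subseteq G$ we have $\rho^{(G_n,X)}(\eta)\le\theta(\eta)$ for every $n$. Fix $\delta>0$. By the first paragraph (take $\kappa=1/4$), for all large $n$ there is a Borel set $Y_0\subseteq X$ with $\diam Y_0<\delta$ and $\nu_{G_n,x}(Y_0)\ge 3/4$; choosing $y\in Y_0$ gives $\nu_{G_n,x}(B_X(y,\delta))>1/2$. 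The second assertion of Proposition \ref{p3.1}, applied to the $G_n$-action, then produces $x_{\ast}\in G_nx$ with $\dist_X(x_{\ast},hx_{\ast})\le 2\delta+2\rho^{(G_n,X)}(\delta)\le 2\delta+2\theta(\delta)$ for every $h\in G_n$. Since $x_{\ast}\in G_nx$ implies $G_nx=G_nx_{\ast}$, the triangle inequality yields $\diam(G_nx)\le 4\delta+4\theta(\delta)$ for all large $n$.

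To conclude I would let $\delta\downarrow 0$. Fix any $m$; for $\delta>0$ and any sufficiently large $n\ge m$ we have $G_mx\subseteq G_nx$, hence $\diam(G_mx)\le\diam(G_nx)\le 4\delta+4\theta(\delta)$. As $\delta$ is arbitrary and $\theta(\delta)\to 0$, this gives $\diam(G_mx)=0$, i.e. $gx=x$ for every $g\in G_m$. Therefore $gx=x$ for all $g$ in the dense subgroup $\bigcup_nG_n$, and continuity of $g\mapsto gx$ extends this to all $g\in G$. Since $x\in X$ was arbitrary, the action is trivial.

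The only real subtlety I anticipate is keeping the two moduli of continuity cleanly separated: the modulus $\omega$ governing the orbit maps $f^n_x$ (which is what makes Corollary \ref{c3.2} applicable, and where boundedness is used) versus the modulus $\theta$ governing the maps $y\mapsto gy$ (which is what uniform equicontinuity provides and what Proposition \ref{p3.1} consumes). Beyond that, the one technical nicety is the harmless passage to the compact orbit closures, so that $(\overline{G_nx},\dist_X,\nu_{G_n,x})$ is literally an mm-space to which property $(\lozenge)$ applies; everything else is a direct application of the machinery of Sections 2 and 3.
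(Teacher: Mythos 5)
Your proposal is correct and follows essentially the same route as the paper: Lemma \ref{l3.2} plus Corollary \ref{c3.2} to make $\{(X,\dist_X,\nu_{G_n,x})\}_n$ a L\'evy family, property ($\lozenge$) to shrink the partial diameters, Proposition \ref{p3.1} (the paper invokes it via Corollary \ref{c3.1}) to bound $\diam(G_nx)$, and then the nestedness of the orbits $G_1x\subseteq G_2x\subseteq\cdots$ together with density of $\bigcup_n G_n$ to conclude triviality. Your explicit handling of the two moduli and the passage to orbit closures are just slightly more careful versions of what the paper does implicitly.
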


 \begin{proof}[Proof of Propositions \ref{th2} and \ref{th3}]From the
  definition of $G$, the group $G$ contains an increasing chain of
  compact subgroups $G_1\subseteq G_2 \subseteq \cdots \subseteq G_n
  \subseteq \cdots $ having an everywhere dense union in $G$ such that
  for some right-invariant compatible distance function $\dist_G$ on
  $G$, the sequence $\{(X,\dist_X,\mu_{G_n})\}_{n=1}^{\infty}$ forms a L\'{e}vy
  family. Let $x\in X$ be an arbitrary point.

  We first prove Proposition \ref{th2}. Since $G$ boundedly acts on $X$ and
  $\dist_G$ is right-invarinat, by
  vritue of Lemma \ref{l3.2}, for
  any $\varepsilon >0$ there exists $\delta >0$ such that
  $\dist_X(gy,g'y)<\varepsilon/2$ for any $y\in X$ and $g,g'\in G$ with
  $\dist_G(g,g')\leq \delta$. Take a subset $\{g_1,g_2, \cdots,
  g_N\}\subseteq G$ such that each $g\in K$ is within distance $\delta$
  of the set $\{ g_1,g_2, \cdots, g_N\}$ and all $g_i$ are contained in
  $G_\ell$ for some large $\ell\in \mathbb{N}$. Since the orbit map
  $f_x:G\to X$ is uniformly continuous, by using
  Corollary \ref{c3.2}, the sequence $\{(X,\dist_X, \nu_{G_n,x}) \}_{n=1}^{\infty}$
  is a L\'{e}vy family. From the property
  ($\lozenge$) of the space $X$ the identity maps $\id_n: (X,\dist_X,\nu_{G_n,x})\to X$
  concentrate, that is, $\lim_{n\to \infty}\diam
  (\nu_{G_n,x},1-\kappa)=0$ for any $\kappa >0$. Hence there exist
  $\varepsilon_n >0$ and $x_n \in X_n$ such that $\lim_{n\to
  \infty}\varepsilon_n =0$ and $\lim_{n\to
  \infty}\nu_{G_n,x}(B_X(x_n,\varepsilon_n))=1$. Take $n_0\in \mathbb{N}$ such
  that $n_0\in \mathbb{N}$,
  $\nu_{G_{n_0},x}(B_X(x_{n_0},\varepsilon_{n_0}))>1/2$ and
  $\varepsilon_{n_0}\leq \rho^{(\{g_1, g_2, \cdots, g_N\},X)}(\varepsilon_{n_0})<\varepsilon
  /4$. The same method of the proof of (\ref{s3.1}), we obtain
  \begin{align*}
   \dist_X(x_{n_0}, g_ix_{n_0})\leq \varepsilon_{n_0}+ \rho^{(\{g_1,g_2, \cdots, g_N\},X)}(\varepsilon_{n_0})< \varepsilon /2
   \end{align*}for any $g_i$. For any $g\in K$, choosing $g_i$ with
  $\dist_G(g_i,g)<\delta$, we obtain
\begin{align*}
 \dist_X(x_{n_0}, gx_{n_0})\leq \dist_X(x_{n_0},g_ix_{n_0}) +
 \dist_X(g_i x_{n_0}, g x_{n_0})\leq \frac{\varepsilon}{2}+\frac{\varepsilon}{2}=\varepsilon
 \end{align*}by the definition of $\delta>0$. This completes the proof of Proposition \ref{th2}.

  We next prove Proposition \ref{th3}. Since $\lim_{\eta \to
  0}\rho^{(G,X)}(\eta)=0$, by using Corollary \ref{c3.1}, we
  get
  \begin{align*}
   \diam (G_n x)\leq 2\lim_{\kappa \uparrow 1/2}\diam
   (\nu_{G_n,x},1-\kappa)+ 2\rho^{(G,X)}\big(+\lim_{\kappa \uparrow 1/2}\diam (\nu_{G_n ,x},1-\kappa)\big) \to 0 
   \end{align*}as $n\to \infty$. Since $G_1 x\subseteq G_2
  x\subseteq \cdots \subseteq G_n x \subseteq G_{n+1}x \subseteq \cdots$, we
  therefore obtain $G_n x= \{ x\}$ for any $n\in \mathbb{N}$. This
  completes the proof of Proposition \ref{th3}.
 \end{proof}

  Note that every continuous action of a topological group on a compact
 metric space is bounded. Since a compact metric space has the property
 ($\lozenge$) and a L\'{e}vy group $G$ contains an increasing chain of
 compact subgroups $G_n$ having
 an everywhere dense union, Proposition \ref{th2} includes the fixed
 point theorem (\cite[Theorem 7.1]{milgro}) by Gromov and Milman.

 \begin{ack}\upshape The author would like to express his thanks to
  Professor Takashi Shioya for his valuable suggestions and assistances during the preparation of
  this paper.
  \end{ack}

	\end{document}